\newcommand{\CC}{\mathbb{C}}
\newcommand{\E}{\mathbb{E}}
\newcommand{\V}{\mathbb{V}}
\renewcommand{\vec}{\boldsymbol}
\newcommand{\RR}{\mathbb{R}}
\newcommand{\cA}{\mathcal{A}}
\newcommand{\cK}{\mathcal{K}}
\newcommand{\cC}{\mathcal{C}}
\newcommand{\cS}{\mathcal{S}}
\newcommand{\QQ}{\mathbb{Q}}
\newcommand{\sL}{\operatorname{SL}}
\newcommand{\GL}{\operatorname{GL}}
\newcommand{\rank}{\operatorname{rank}}
\newcommand{\Tr}{\operatorname{Tr}}
\newcommand{\diag}{\operatorname{diag}}
\newcommand{\poly}{\operatorname{poly}}
\newcommand{\trun}{\operatorname{trun}}
\newcommand{\ds}{\operatorname{ds}}
\newcommand{\row}{\operatorname{row}}
\newcommand{\supp}{\operatorname{supp}}
\newcommand{\capacity}{\operatorname{cap}}
\newcommand{\Mat}{\operatorname{Mat}}
\newtheorem{theorem}{Theorem}
\newtheorem{lemma}[theorem]{Lemma}
\newtheorem{proposition}[theorem]{Proposition}
\newtheorem{corollary}[theorem]{Corollary}
\newtheorem{fact}[theorem]{Fact}
\theoremstyle{definition}
\newtheorem{qn}[theorem]{Question}
\newtheorem{definition}[theorem]{Definition}
\newtheorem{example}[theorem]{Example}
\newtheorem{remark}[theorem]{Remark}
\title{Operator Scaling with Specified Marginals}
\author{Cole Franks \thanks{Supported in part by Simons Foundation award 332622.} \thanks{Department of Mathematics, Rutgers University,
Hill Center, 110 Frelinghuysen Road Piscataway NJ 08854-8019 USA}}
\begin{document}
\maketitle

% The default list of authors is too long for headers.
%\renewcommand{\shortauthors}{B. Trovato et al.}

\begin{abstract}
The completely positive maps, a generalization of the nonnegative matrices, are a well-studied class of maps from $n\times n$ matrices to $m\times m$ matrices. The existence of the operator analogues of doubly stochastic scalings of matrices, the study of which is known as \emph{operator scaling}, is equivalent to a multitude of problems in computer science and mathematics such rational identity testing in non-commuting variables, noncommutative rank of symbolic matrices, and a basic problem in invariant theory (Garg et. al., 2016).\\
\indent We study \emph{operator scaling with specified marginals}, which is the operator analogue of scaling matrices to specified row and column sums (or marginals). We characterize the operators which can be scaled to given marginals, much in the spirit of the Gurvits' algorithmic characterization of the operators that can be scaled to doubly stochastic (Gurvits, 2004). Our algorithm, which is a modified version of Gurvits' algorithm, produces approximate scalings in time $\operatorname{poly}(n,m)$ whenever scalings exist. A central ingredient in our analysis is a reduction from operator scaling with specified marginals to operator scaling in the doubly stochastic setting.\\
\indent Instances of operator scaling with specified marginals arise in diverse areas of study such as the Brascamp-Lieb inequalities, communication complexity, eigenvalues of sums of Hermitian matrices, and quantum information theory. Some of the known theorems in these areas, several of which had no algorithmic proof, are straightforward consequences of our characterization theorem. For instance, we obtain a simple algorithm to find, when it exists, a tuple of Hermitian matrices with given spectra whose sum has a given spectrum. We also prove new theorems such as a generalization of Forster's theorem (Forster, 2002) concerning radial isotropic position.

%\footnote{This is an abstract footnote}
\end{abstract}

%
% The code below should be generated by the tool at
% http://dl.acm.org/ccs.cfm
% Please copy and paste the code instead of the example below.
%
%\begin{CCSXML}
%<ccs2012>
%<concept>
%<concept_id>10003752.10003809.10003716</concept_id>
%<concept_desc>Theory of computation~Mathematical optimization</concept_desc>
%<concept_significance>300</concept_significance>
%</concept>
%</ccs2012>
%\end{CCSXML}

%\ccsdesc[300]{Theory of computation~Mathematical optimization}

%\keywords{Operator Scaling, Completely Positive Maps, Horn's Problem}

\tableofcontents

%%%
%%%
%%%
\section{Introduction}
%%% will include some applications (those questions)
%%% will include main theorems (what to call this section?)
%%%
%%%

% % %

Completely positive maps are linear maps between spaces of matrices that, informally speaking, preserve positive-semidefiniteness in a strong sense. Completely positive maps generalize the nonnegative matrices and arise in quantum information theory \cite{Ja74}. To each completely positive map $T:\Mat_{n\times n}(\CC) \to \Mat_{m \times m} (\CC)$ is associated another completely positive map $T^*: \Mat_{m\times m}(\CC) \to \Mat_{n \times n} (\CC)$ known as the \emph{dual} of $T$. In analogy with the matrix case, say a completely positive map is \emph{doubly stochastic} if $n = m$ and $T(I_n) = I_n$ and $T^*(I_n) = I_n$. A \emph{scaling} $T'$ of a completely positive map $T$ by a pair of invertible linear maps $(g,h)$ is another completely positive map $T_{g,h}:X \mapsto g^\dagger T (h X h^\dagger) g$. One is led to ask which completely positive maps have doubly stochastic scalings; \emph{operator scaling} is the study of this question. In fact, several other problems such as rational identity testing in non-commuting variables \cite{GGOW16}, membership in the null-cone of the left-right action of $\sL_n(\CC)\times\sL_m(\CC)$ \cite{GGOWbl16}, and a special case of Edmonds' problem \cite{Gu04} each reduce to (or are equivalent to) an approximate version of this question. In \cite{Gu04}, Gurvits gave two useful equivalent conditions for approximate scalability: a completely positive map $T:\CC^n \to \CC^n$ can be approximately scaled to doubly stochastic if and only if $T$ is \emph{rank-nondecreasing}, i.e. $\rank T(X) \geq \rank X$ for all $X \succeq 0$, or equivalently $\capacity T  > 0$ where \begin{equation*}\capacity T:= \inf_{X \succ 0}\frac{\det T(X)}{ \det X}. \label{cap_formula}\end{equation*}
Gurvits also gave an algorithm to compute approximate scalings if either of these equivalent conditions hold. The authors of \cite{GGOW16}, \cite{GGOWbl16},\cite{Gu04} analyzed the same algorithm to obtain polynomial-time decision algorithms for each of the aforementioned problems.\\

We consider a natural generalization of doubly stochastic scalings. Say $T$ \emph{maps $(A\to B, C\to D)$} if $T(A) = B$ and $T^*(C) = D$.
% and say $\hat{T}$\emph{ is an $(I \to P, I \to Q)$-scaling of $T$} if $\hat{T}$ is a scaling of $T$ that maps $(I\to P, I\to Q)$. 
Say $T$ is \emph{$(I_n \to Q, I_m \to P)$-scalable} if there exists a scaling of $T$ that maps $(I_n \to Q, I_m \to P)$.\footnote{ We could just as well have discussed $(A \to B, C \to D)$-scalability, but it is equivalent to approximate (resp. exact) scalability to $(I_n \to Q, I_m \to P)$-scalability for $P = A^{1/2} D A^{1/2}$ and $Q = C^{1/2} B C^{1/2}$.}

\begin{qn}\label{amcos_four_stoc}Given positive semidefinite matrices $P$ and $Q$, which completely positive maps are $(I_n \to Q, I_m \to P)$-scalable?\end{qn}

%The answer to this question is known to be yes \cite{Fr16} if $T$ is \emph{positivity improving}, that is $T$ maps any nonzero positive semidefinite matrix to a matrix of full-rank, answering a close variant of Question \ref{amcos_four_stoc} in \cite{GP15}. 
\indent Note that $(I_n \to I_n, I_n \to I_n)$-scalability is precisely the doubly stochastic case treated by Gurvits in \cite{Gu04}. We extend Gurvits' characterization of approximate scalability to the setting of Question \ref{amcos_four_stoc}. As in \cite{Gu04}, our proofs show how to efficiently produce approximate scalings when they exist. Our first main theorem, which closely resembles the characterization in \cite{Gu04}, characterizes the existence of approximate $(I_n \to Q, I_m \to P)$-scalability by block-upper-triangular matrices. Next we extend this characterization to handle scaling by the full group of invertible matrices with a somewhat surprising outcome - informally, a completely positive map $T$ is approximately $(I_n \to Q, I_m \to P)$-scalable if and only if a random scaling of $T$ is approximately $(I_n \to Q, I_m \to P)$-scalable \emph{by upper triangular matrices} with high probability.\\
\indent An interesting property of the characterizations is that for $T$ fixed, the admissible spectra of $P$ and $Q$ form a convex polytope known as a \emph{moment polytope}. This is a special case of a more general phenomenon obeyed by group actions studied in algebra, geometry and physics; see \cite{CDW13}. 
%A close variant of Question \ref{amcos_four_stoc} first appeared in \cite{GP15}, in which the authors describe what are essentially $(I \to P, I \to Q)$-scalings as quantum analogues of Markov chains with given marginals satisfying some relative entropy minimality conditions. 

% % %
\subsection{Prior Work}
\indent Following a close variant of Question \ref{amcos_four_stoc} asked in \cite{GP15}, a special case of our characterization had already been proven in \cite{Fr16} via fixed-point theorems apparently unrelated to our techniques: namely, it was shown that any \emph{positivity-improving} operator, or operator that maps the nonzero positive-semidefinite matrices to the positive-definite matrices, is $(I_n \to Q, I_m \to P)$-scalable.\footnote{Given the necessary condition, which we will soon see is obvious, that $\Tr P = \Tr Q$.}\\
\indent The convexity of the marginals of scalings of $T$ follows from well-known theorems on moment polytopes; for a survey of these results see \cite{CDW13}. These techniques could also likely be used to deduce our characterization of $(I \to Q, I \to P)$-scalability \cite{Fz02}, but to the author's knowledge this has not been done explicitly in the literature. Further, these techniques do not result in algorithms and rely on powerful theorems from geometric invariant theory. In contrast, our proofs yield algorithms for computing approximate scalings and require only the results of \cite{Gu04},\cite{GGOW16} and a few basic facts from algebraic geometry. One similarity does exist: the so-called \emph{shifting trick} \cite{Bri87} in geometric invariant theory is a different type of reduction, but one that does not stay within the realm of operator scaling.
% % %
\subsection{Special Cases}\label{subsec:special_cases}
Here we list a few questions that reduce to or are special cases of Question \ref{amcos_four_stoc}. 
\begin{qn}[Matrix scaling]\label{qn:rc_scaling}
Given a nonnegative matrix $A \in \Mat_{m,n}(\RR)$ and nonnegative row- and column-sum vectors $\mathbf{r} \in \RR^m_{\geq 0}$ and $\mathbf{c} \in \RR^n_{\geq 0}$, do there exist diagonal matrices $X,Y$ such that the row (resp. column) sums of $A' = XAY$ are $\mathbf{r}$ (resp. $\mathbf{c}$)?
\end{qn}
It is well-known that matrix scaling can be reduced to an instance of operator scaling with specified marginals, but Gurvits' characterization for operator scaling does not apply to this instance unless $\mathbf{r}$ and $\mathbf{c}$ are the all-ones vectors. 
In Section \ref{rc_scaling_app}, we recall the reduction from Question \ref{qn:rc_scaling} to Question \ref{amcos_four_stoc} and derive the classic theorem of \cite{RS89} on the existence of such scalings as a consequence of Theorem \ref{thm:apx_delta_scalable}.
\begin{qn}[Eigenvalues of sums of Hermitian matrices]\label{hermitians}
Given nonincreasing sequences $\alpha, \beta, \gamma$ of $m$ real numbers, are $\alpha, \beta, \gamma$ the spectra of some $m\times m$ Hermitian matrices $A,B,C$ satisfying 
$$A + B = C?$$
\end{qn}
In \cite{Kl98}, Klyachko showed (amazingly) that the answer to Question \ref{hermitians} is ``yes" if and only if $\alpha, \beta, \gamma$ satisfy a certain finite set $S_m$ of linear constraints. That is, such $(\alpha, \beta, \gamma)$ form a polyhedral cone. A long line of work has been devoted to describing the set $S_m$, which has connections to representation theory, Schubert calculus, and combinatorics \cite{KT00}, \cite{Kl98}, \cite{F00}. There are even polynomial-time algorithms to test if $\alpha, \beta, \gamma$ satisfy $S_m$ \cite{MNS12}. However, no previous work has provided an algorithm to \emph{find} the Hermitian matrices..\\
\indent In Section \ref{hermitians_app}, we show that Question \ref{hermitians} can also be reduced to Question \ref{amcos_four_stoc}.
Our reduction yields an algorithmic proof of Klyachko's characterization in \cite{Kl98}; see Algorithm \ref{alg:inverse_eig}. Question \ref{hermitians} can be reduced by shifting by multiples of the identity and scalar multiplication\footnote{This is also why one can assume $\alpha_m, \beta_m, \gamma_m \geq 1/4$ without loss of generality in Algorithm \ref{alg:inverse_eig}.} to the following: given nonincreasing sequences $\alpha, \beta, \gamma$ of $m$ real numbers, are $\alpha, \beta, \gamma$ the spectra of some $m\times m$ Hermitian matrices
$A + B + C = I_m$? Algorithm \ref{alg:inverse_eig} approximates such matrices, if they exist:
\begin{proposition} \label{prp:inverse_eig}
Algorithm \ref{alg:inverse_eig} runs in time $O( b m^2/\epsilon^2)$ and outputs ERROR with probability at most $1/3$ if $\alpha, \beta, \gamma$ are of total bit-complexity $b$ and are the spectra of some $m\times m$ positive-semidefinite matrices $A, B, C$ satisfying $A + B + C = I_m$.
\end{proposition}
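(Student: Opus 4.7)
The plan is to combine the reduction from Question \ref{hermitians} to operator scaling with specified marginals (Question \ref{amcos_four_stoc}), which is carried out in Section \ref{hermitians_app}, with the approximate scaling algorithm promised by Theorem \ref{thm:apx_delta_scalable}. Given spectra $(\alpha, \beta, \gamma)$ I first construct a completely positive map $T$ of dimension $\poly(m)$ together with target marginals $P, Q$ depending linearly on $\alpha, \beta, \gamma$, such that an exact $(I \to Q, I \to P)$-scaling of $T$ can be decoded into a triple of positive semidefinite matrices $(A, B, C)$ with spectra $(\alpha, \beta, \gamma)$ summing to $I_m$, and conversely any such triple yields such an exact scaling.

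I then upgrade this bijective correspondence to a quantitative one: an $\epsilon'$-approximate scaling, meaning one whose marginals lie within $\epsilon'$ of $P$ and $Q$ in a suitable norm, decodes to matrices $(\tilde A, \tilde B, \tilde C)$ whose spectra and sum are $O(\epsilon' \poly(m))$-close to the targets. Choosing $\epsilon' = \Theta(\epsilon / \poly(m))$ then yields the accuracy promised by Algorithm \ref{alg:inverse_eig}. This quantitative step is where one must carefully track how perturbations of $T_{g,h}(I)$ and $T_{g,h}^*(I)$ propagate to perturbations of the extracted matrices $\tilde A, \tilde B, \tilde C$.

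Given the reduction, the proposition follows by invoking the randomized scaling algorithm from Theorem \ref{thm:apx_delta_scalable}: if the instance is realizable, the algorithm terminates with the required marginal accuracy in $O(m^2 b / (\epsilon')^2) = O(m^2 b / \epsilon^2)$ iterations after absorbing $\poly(m)$ factors, and with failure probability at most $1/3$. The failure probability comes from the random perturbation used to reduce scaling by the full invertible group to scaling by block-upper-triangular matrices, as in the paper's second main theorem. Each iteration consists of $\poly(m)$ arithmetic operations on entries whose bit-complexity is kept bounded by a standard truncation step (the $\trun$ operator introduced in the preamble), so the per-iteration cost is absorbed into the stated bound.

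The main obstacle I anticipate is the quantitative error analysis in the reduction: showing that a marginal-error-$\epsilon'$ scaling of $T$ produces Hermitian matrices with $\epsilon$-correct spectra and sum, robustly in the (potentially very large) entries of the scaling matrices $g, h$. Once this Lipschitz-type statement is in hand, bounding the iteration count and the failure probability follows immediately from Theorem \ref{thm:apx_delta_scalable}, and the total running time is obtained by combining the iteration count with the per-iteration cost.
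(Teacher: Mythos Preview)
Your overall plan matches the paper's: Proposition~\ref{horn_red} supplies the reduction, and the correctness and iteration count come from Proposition~\ref{prp:toy_works_app} (the analysis of Algorithm~\ref{alg:informal_scaling}, of which Algorithm~\ref{alg:inverse_eig} is simply the specialization to the operator $T_m^3$). Two points need correction, though.

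First, you invoke Theorem~\ref{thm:apx_delta_scalable} for the randomized algorithm and the $1/3$ failure probability, but that theorem is the upper-triangular characterization and contains neither an algorithm nor any randomness; the result you want is Theorem~\ref{thm:gln_alg} (quantitatively, Proposition~\ref{prp:toy_works_app}). Also, $\trun$ in this paper denotes the reduction $\trun_{P,Q}$ to the doubly stochastic case, not a numerical rounding step; bit-complexity control is handled separately in Remark~\ref{rem:bit_complexity}.

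Second, the ``main obstacle'' you anticipate is not there. Step 2(a) of Algorithm~\ref{alg:inverse_eig} forces $U_A, U_B, U_C$ to be \emph{unitary} before the termination test, so the returned matrices $A = U_A \diag(\alpha) U_A^\dagger$, $B$, $C$ have \emph{exactly} the prescribed spectra regardless of how large the intermediate scalings were. There is no decoding step and no Lipschitz analysis to perform: the termination condition is literally $\|A + B + C - I_m\| \leq \epsilon$. Correspondingly, the factor $m^2$ in the running time does not arise from setting $\epsilon' = \epsilon/\poly(m)$; it comes from the $\min\{p_n, q_m\}^{-1}$ in Proposition~\ref{prp:toy_works_app}, which is $\Theta(m)$ here precisely because of the input hypothesis $\alpha_m, \beta_m, \gamma_m \geq 1/4$.
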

The correctness of Algorithm \ref{alg:inverse_eig} follow from Proposition \ref{horn_red} in Section \ref{hermitians_app} and the correctness of Algorithm \ref{alg:informal_scaling}.
%That is, we exhibit an algorithm that outputs a sequence of Hermitian matrices (in particular, real symmetric matrices!) $A_n + B_n = C_n$ with spectra approaching $\alpha, \beta, \gamma$ if $\alpha, \beta, \gamma$ satisfy $S_m$. It is enough to be able to compute this for positive $\alpha, \beta, \gamma$. We show the Algorithm

\begin{Algorithm}[th!]
\begin{flushleft}
\textbf{Input:} Nonincreasing length-$m$ sequences $\alpha, \beta, \gamma$ of rational numbers in $(1/4, 1]$ whose total sum is $m$.\\
 \vspace{.25cm}
\textbf{Output:} Matrices $A, B, C$ with $\lambda(A) = \alpha, \lambda(B) = \beta, \lambda(C) = \gamma$ and 
$$\|A + B + C - I_m\| \leq \epsilon.$$ 
% If $T$ is approximately $\GL_m(\CC) \times \GL_n(\CC)$-scalable to $(I_n \to Q, I_m \to P)$, then $\Pr[ERROR < 1/3]$.\\
% \vspace{.25cm}
\textbf{Algorithm:}
\begin{enumerate}
%\item Choose $a, b, c >0$ so that $\alpha' = \alpha + a\mathbf{1}$, $\beta' = \beta + b\mathbf{1}$, $\gamma' = - \gamma + c\mathbf{1}$ are sequences of positive real numbers.
\item %Let $0<\gamma \leq 2^{2b}$ an integer such that $\gamma P$ and $\gamma Q$ have integral spectra. 
Choose each entry of $U_A, U_B, U_C \in  \Mat_{m\times m}(\CC)$ uniformly at random from $[6 \cdot 2^b]$, where $b$ is the total bit-complexity of the input. If one of $U_A, U_B,$ or $U_C$ is singular, \textbf{return} ERROR. 

%Choose each entry of $(g_0, h_0) \in \Mat_{m\times m}(\CC)\times \Mat_{n \times n}(\CC)$ independently and uniformly at random from $[6 \cdot 2^{b}]$. If $g_0$ or $h_0$ is singular, \textbf{return} ERROR.
\item Repeat; if any step is not possible, \textbf{return} ERROR.
\begin{enumerate}
\item Choose $a, b, c$ upper triangular such that $U_A a, U_B b, U_C c$ are unitary. Set $U_A \leftarrow U_A a$, $U_B \leftarrow U_B b$, and $U_C \leftarrow U_C c$.
\item\begin{description}
\item [If:] $$\|U_A \diag(\alpha) U_A^\dagger + U_B \diag(\beta) U_B^\dagger + U_C \diag(\gamma) U_C^\dagger\| \leq \epsilon,$$
\textbf{return} $A = U_A \diag(\alpha) U_A^\dagger$, $B = U_B \diag(\beta) U_B^\dagger$, and $C = U_C \diag(\gamma) U_C^\dagger$.\\
\item [Else:] Choose $g$ lower-triangular such that 
$$ g\left(U_A \diag(\alpha) U_A^\dagger + U_B \diag(\beta) U_B^\dagger + U_C \diag(\gamma) U_C^\dagger\right) g^\dagger = I_m$$
and set $U_A \leftarrow gU_A $, $U_B \leftarrow gU_B $, and $U_C \leftarrow gU_C $.
\end{description}
\end{enumerate}
\end{enumerate}
\caption{Algorithm for Question \ref{hermitians}.}\label{alg:inverse_eig}
\end{flushleft}
\end{Algorithm}

\begin{qn}[Forster's scalings]\label{forst_quest_stoc}
Given vectors $u_1, \dots, u_n \in \CC^m$, nonnegative numbers $p_1, \dots, p_n$, and a positive-semidefinite matrix $Q$, when does there exist an invertible linear transformation $B : \CC^m \to \CC^m$ such that 
$$\sum_{i =1}^{n}p_i\frac{  B u_i  (B u_i)^\dagger }{\|Bu_i\|^2}  = Q?$$
\end{qn}
Forster answered Question \ref{forst_quest_stoc} in the positive for $p_i =1$, $u_i$ in general position, and $Q = \frac{n}{m}I_m$; as a consequence he was able to prove previously unattainable lower bounds in communication complexity \cite{Fo02}. As noted in \cite{Gu04}, Forster's result is a consequence of Gurvits' characterization of doubly stochastic scalings. Independently, Barthe \cite{B98} answered this question completely for the case $Q = I_m$ in order to study the rank-one Brascamp-Lieb inequalities. He showed the left hand side can approach the right hand side if and only if $(p_1, \dots, p_n)$ lies in the \emph{basis polytope} of $u_1, \dots, u_n$. In Section \ref{forst_app} we reduce the general case of Question \ref{forst_quest_stoc} to an instance of Question \ref{amcos_four_stoc}, and use this reduction to answer the approximate version of Question \ref{forst_quest_stoc}. For fixed $u_1, \dots, u_n$ and $Q$, the admissible $(p_1, \dots, p_n)$ form a convex polytope with known as an \emph{polymatroid}, of which the basis polytope is a special case. It would be interesting to find an application of our more general characterization, perhaps along the lines of the use of Barthe's theorem in subspace recovery in \cite{AM13}.
\begin{qn}[Quantum marginals]\label{marginals}For which positive-semidefinite operators $\rho:\CC^n \otimes \CC^m \to \CC^n \otimes \CC^m$ and positive-semidefinite operators $P:\CC^n \to \CC^n$, $Q:\CC^m \to \CC^m$ can the reduced density matrices of $(g\otimes h) \rho (g \otimes h)^\dagger$ be made arbitrarily close to $P$ and $Q$?\end{qn}
Question \ref{marginals} is equivalent to (the approximate version of) Question \ref{amcos_four_stoc} by a correspondence known as \emph{state-channel duality} \cite{Ja74}. The convex polytope of admissible spectra for the tripartite version of Question \ref{marginals}, in which there are three specified reduced density matrices $P, Q$ and $R$ rather than two, is is called the \emph{Kronecker polytope} and arises in the representation theory of the symmetric group. A polynomial time algorithm for Question \ref{amcos_four_stoc} would confirm that membership in the Kronecker polytope is in $\mathbf{P}$, whereas it is only known to be in $\mathbf{coNP} \cap \mathbf{NP}$ \cite{BCMW17}. The author views this paper as a step towards solving this problem - however, we have only two specified marginals and can produce only a very inefficient membership oracle for our polytope. The work \cite{BFGOWW18}, which came shortly after this one, remedies the former issue but not the latter.
% % %
\subsection{Organization of the Paper}
\begin{itemize}
\item In Section \ref{results_sec} we describe some background and state our main theorems, Theorem \ref{thm:apx_delta_scalable}, Theorem \ref{thm:gln_scalable}, and Theorem \ref{thm:gln_alg}. We also describe a few of the techniques that are needed in the proofs, and how they differ from previous work.
\item In Section \ref{sec:reduction} we define a reduction to the doubly stochastic case on which most of our proofs rely.
\item In Section \ref{sec:triangular_scalings} we prove our main theorem characterizing scalability by upper triangulars, Theorem \ref{thm:apx_delta_scalable}. Sections \ref{sec:reduction} and \ref{sec:triangular_scalings} contain elements of this proof; the elements are assembled in \ref{subsec:apx_delta_scalable_proof} We also analyze the running time of Algorithm \ref{alg:alg_g_tri} for finding upper triangular scalings when they exist.
\item In Section \ref{sec:general_scalings} we extend the results of Section \ref{sec:triangular_scalings} to prove Theorem \ref{thm:gln_scalable}, our characterization of scalability by the full general-linear groups. We also analyze Algorithm \ref{alg:informal_scaling} for finding \emph{general} scalings, when they exist.
\item In Section \ref{sec:applications} we extend our main theorems to characterize scalability by direct sums of general linear groups. We then use this extension to reduce Questions \ref{qn:rc_scaling}, \ref{hermitians}, and \ref{forst_quest_stoc} to operator scaling with specified marginals. 
\end{itemize}
%Some proofs are omitted due to space constraints. The full version can be found in \cite{Fra18}.
%%%
%%%
%%%
\section{Background and results}\label{results_sec}
%%%%
%%%%
%%%%
\subsection{Preliminaries}\label{subsec:prelim}

We will require a few definitions. If $A$ is a matrix, $\| A \|:=\sqrt{\Tr A A^\dagger}$. \begin{definition}[completely positive maps]A completely positive map is a map $T: \Mat_{n\times n}(\CC)\to \Mat_{m\times m}(\CC)$ of the form 
$$T:X \mapsto \sum_{i = 1}^r A_i X A_i^\dagger,$$
 $A_i:\CC^n \to \CC^m$ are linear maps called \emph{Kraus operators} of $T$. Note that $T$ preserves positive-semidefiniteness. The map $T^*:\Mat_{m\times m}(\CC)\to \Mat_{n\times n}(\CC)$ is given by 
$$T^*:X \mapsto \sum_{i = 1}^r A_i^\dagger X A_i,$$ and is the adjoint of $T$ in the trace inner product $\langle A, B \rangle = \Tr A^\dagger B$, where $A^\dagger$ is the conjugate transpose of $A$, where any $\CC^d$ is understood to be equipped with the standard Hermitian inner product. 
%Recall that we say $T$ \emph{maps $(A\to B, C\to D)$} if $T(A) = B$ and $T^*(C) = D$.
\end{definition}

\begin{definition}[approximate scalings]Say a scaling $T'$ of $T$ is an $\epsilon$-$(A\to B, C\to D)$-\emph{scaling} if $T'$ maps $(A \to B', C\to D')$ with $\|B - B'\| \leq \epsilon$ and $\|D - D'\| \leq \epsilon$. If $G \subset \GL_n(\CC)$ and $H \subset \GL_m(\CC)$, say $T$ is \emph{approximately $G\times H$-scalable to} $(A\to B, C\to D)$ if for all $\epsilon > 0$, $T$ has an $\epsilon$-$(A\to B, C\to D)$-scaling $T'$ by $(g,h) \in G \times H$.
\end{definition}
Throughout, we'll let $E = (e_1, \dots, e_n)$ and $F = (f_1, \dots, f_m)$ be the standard orthonormal bases for $\CC^n$ and $\CC^m$. We'll assume that 
$$P = \diag(p_1, \dots, p_n) \textrm{ and } Q = \diag(q_1, \dots, q_n),$$
where $\mathbf{p} = (p_1, \dots, p_n)$ and $\mathbf{q} = \diag(q_1, \dots, q_n)$ are non-increasing sequences of nonnegative real numbers.\\
\indent This is without loss of generality: the groups $G$ and $H$ of interest will contain the diagonalizing unitaries for anything the image of $T$ and $T^*$, respectively, and if $T_{g,h}$ is an $(I \to Q, I \to P)$-scaling of $T$ then $T_{gV,hU}$ is an $(I_n \to V^\dagger Q V, I_m \to U^\dagger P U)$-scaling for any unitaries $U$ and $V$.
If $\mathbf{a} = (a_1, \dots, a_k)$ is a non-increasing sequence of real numbers, the shorthand
$$ \Delta a_i = a_i - a_{i +1} \textrm{ where } a_{k + 1}:=0$$
will also be helpful.
\begin{definition}[Flags]
We'll first consider scalings by upper triangular matrices. Equivalently, the scalings must preserve \emph{the standard flag} 
$$E_\bullet = ( \;\langle e_1 \rangle, \langle e_1, e_2 \rangle, \dots, \langle e_1, \dots, e_{n-1} \rangle \; )$$
in the basis $E$ and the standard flag $F_\bullet$ in the basis $F$. More generally, a \emph{flag} is an increasing sequence of subspaces. 
\end{definition}

%More generally, a \emph{flag} is an increasing sequence of subspaces. We'll let $\GL(E_\bullet)$ denote the upper triangular matrices in the basis $E$, or those that preserve $E_\bullet$. Define $\GL(F_\bullet)$ analogously.

% % %
\subsection{Capacity and Rank-Nondecreasingness Extended}
Our results split into two parts: a characterization of scalability by upper-triangular matrices,  and the application of this characterization to $T_{g,h}$ for generic $g$ and $h$. Our description of scalability by upper-triangular matrices is very similar to Gurvits' characterization. We first need analogues of rank-nondecreasingness and capacity.
\subsubsection*{Rank-nondecreasingness}
\indent Recall that a completely positive operator $T:\CC^n \to \CC^n$ is rank-nondecreasing if $\rank T(X) \geq \rank X$ for all $X \succeq 0$. Rank-nondecreasingness is much like a Hall's condition for completely positive operators. We restate the definition using a notion we call \emph{$T$-independence}, which is motivated by independent sets in bipartite graphs.
\begin{definition}[$T$-independence]\label{def:t_indep}
Suppose $T$ is as in \ref{subsec:prelim}. We say a pair of subspaces $(L \subset \CC^m, R \subset \CC^n)$ is \emph{$T$-independent} if 
$$L \subset (A_i R)^\perp\textrm{ for all } i \in [r].$$
Equivalently, the pair $(L,R)$ is $T$-independent if and only if $\pi_L T(\pi_R) = 0$ where $\pi_L, \pi_R$ are the orthogonal projections to the subspaces $L$ and $R$. 
\end{definition}

In this language, it's not hard to see that $T$ is rank-nondecreasing if and only if $\rank L + \rank R \leq n$ for all $T$-independent pairs $(L,R)$ - analogous to the fact that a bipartite graph $G$ on $[n] \cup [n]$ satisfies Hall's condition if and only if the cardinality of the largest independent set is at most $n$. We extend the definition as follows: 
\begin{definition}[rank-nondecreasingness for specified marginals]\label{def:rnd_diff} Suppose $T, P, Q$ are as in \ref{subsec:prelim}. Say $T$ is \emph{$(P,Q)$-rank-nondecreasing} if for all $T$-independent pairs $(L,R),$
\begin{equation}\sum_{i =1}^m \Delta q_i \dim E_i \cap L + \sum_{j =1}^n \Delta p_j \dim F_j \cap R \leq \Tr P.\label{diff_rnd_eq}\end{equation}
%\CF{We can extend this definition to $P$ and $Q$ that need not be diagonal by replacing $F_\bullet$ (resp. $E_\bullet$) by $F_\bullet(P)$ (resp. $F_\bullet(Q)$).}
\end{definition}

\begin{remark}\label{rem:schubert}
The purpose of this remark is to relate our rank-nondecreasingness condition with some familiar mathematical objects; namely the Schubert cells and varieties; see the introduction \cite{Le10}. The $k$-dimensional subspaces of $\CC^m$ satisfying $d_i = \dim E_i \cap L$ for all $i \in [m]$ for a fixed sequence $\textbf d$ is known as the \emph{Schubert cell} $\Omega^\circ_I(E_\bullet)$, where $I = \{i: d_i - d_{i - 1} \cap L > 0\} \subset [m]$ is the sequence of ``jumps" in the dimension. Observe that if $ L \in \Omega^\circ_I(E_\bullet)$ we have 
$$\sum_{i =1}^m \Delta q_i \dim E_i \cap L = \sum_{i \in I} q_i.$$ The Schubert cells partition the collection of $k$-dimensional subspaces of $\CC^m$ (the Grassmanian $\operatorname{Gr(n,k)}$). The \emph{Schubert variety} $\Omega^\bullet_I(E_\bullet)$ is the Zariski closure of $\Omega^\circ_I(E_\bullet)$ in $\operatorname{Gr}(m,k)$, and can be equivalently described as the $k$-dimensional subspaces such that for all $j \in [k]$, $\dim E_{i_j} \cap L \geq j$ where $I = \{i_1, \dots, i_k\}$. Further, if $\textbf p$ is monotone decreasing then the function $\sum_{i \in I} p_i$ is monotone decreasing in the lattice of Schubert varieties ordered by inclusion. All this goes to show that $T$ is $(P,Q)$-rank-nondecreasing if and only if 
\begin{equation}\sum_{i \in I} q_i + \sum_{j \in J} p_i \leq \Tr P \label{rnd_3}\end{equation}
for all $I \subset [m]$, $J \subset [n]$ such that $\Omega_I^\bullet (E_\bullet)\times \Omega_{J}^\bullet(F_\bullet)$ contains a $T$-independent pair. This formulation also enjoys a similarity with Rothblum and Schneider's condition for approximate scalability of matrices to specified row and column sums \cite{RS89}.  
\end{remark}
\subsubsection*{Capacity}
Next we need to extend the capacity 
$$ \capacity(T) = \inf_{X \succ 0}\frac{\det T(X)}{ \det X}.$$
For those familiar with the matrix-scaling case, we are motivated by how the capacity 
\begin{align}
 \inf_{x_j > 0}\frac{\prod_{i = 1}^m (A\textbf{x})_i^{r_i}}{\prod_{j = 1}^n x_j^{c_j}} \label{eq:class_cap}
\end{align}
extends the $\textbf{r}=\textbf{1}, \textbf{c} = \textbf{1}$ case. The numerator here is hyperbolic, so the nonvanishing of capacity tells us that the uniform monomial must be present and so the support of this matrix contains a perfect matching. \\
\indent We'll need a function extending the denominator $\prod_{j = 1}^n x_j^{c_j}$ to non-diagonal matrices. 
\begin{definition}[Relative determinant]\label{def:rel_det}
Let $\eta_j:\CC^k \to F_j$ be the coordinate projection to the first $j$ coordinates. The dimension $k$ will be clear from context.
\begin{align} \begin{array}{ccc} & n  & \\
\eta_j =& \left[ \begin{array}{ccc ccc} 1 & \hdots & 0 &0 &  \hdots & 0 \\
\vdots & \ddots & \vdots & \vdots & \ddots & \vdots \\
0 & \hdots & 1 & 0 &  \hdots & 0 \\
 \end{array}\right] & j.\end{array}\label{eq:projections}
 \end{align}
%\CF{Define $\nu_i$ to be the analogous projection for $Q$ and $E_i$.}
If $A:\CC^k \to \CC^k$ is a diagonal, positive-definite operator with spectrum $\mathbf{a} = (a_1, \dots, a_n)$ and $X \in \Mat_{k \times k} (\CC^m)$, define the \emph{determinant of $X$ relative to $A$}, denoted $\det(A, X)$, by 
\begin{align}
\det(A, X) = \prod_{j = 1}^n (\det \eta_j X \eta_j^\dagger)^{\Delta a_j }.\label{eq:rel_det}
\end{align}
%Define $\det(Q, X) = \prod_{j = 1}^n (\det \nu_j X \nu_j^\dagger)^{\Delta q_j }$.
We always use the convention $0^0 = 1$. 
%\CF{Here $P$ is diagonal, but more generally we can define $\det(A, X)$ for any positive-semidefinite operator $A$ by replacing $\eta_j$ by the projection to $F_j(A)$.}
\end{definition}
It is instructive to see how Eq. \ref{eq:rel_det} recovers the denominator of Equation \ref{eq:class_cap} when $X = \diag(\textbf{x})$ and $\textbf{a} = \textbf{c}$.

\begin{remark}
The relative determinant defined above is related to representation theory. A \emph{highest weight vector} of weight $\textbf a = (a_1 \geq \dots \geq  a_m) \in \RR^m$ in a representation $V$ of $\GL(\CC^m)$ is a vector $v \in V$ such that any upper triangular matrix $g$ acts on $v$ by the scalar multiplication $b\cdot v  = \chi_\textbf a(g)v$, where 
$$ \chi_\textbf a(g):= \prod_{i = 1}^m g_{ii}^{a_i}.$$
Observe that $\chi_\textbf a$ is multiplicative, i.e. $\chi_\textbf a(g_1 g_2) = \chi_\textbf a(g_1) \chi_\textbf a(g_2)$ for $g_1,g_2 \in \GL(E_\bullet)$. The relationship to $\det(A, X)$ is as follows: if $A = \diag(\textbf a)$ and $g_1, g_2$ are upper triangular, then 
\begin{align} \det(A, g_1^\dagger g_2) = \overline{\chi_\textbf a(g_1)}\chi_\textbf a(g_2). \label{eq:character}
\end{align}
Prove Eq. \ref{eq:character} via straightforward calculation in Appendix \ref{app:results_sec}. 
\end{remark}

Eq. \ref{eq:character} and the density of $L U$-factorable matrices in the space of square matrices immediately yields the following formulae:

\begin{lemma}[Properties of $\det(A,X)$]\label{detpx_facts}
If $h$ is upper triangular, then
\begin{align}
\det(A, X h) &=  \det(A, X) \det(A,  h),\label{oneside_mult}\\
\det(A, h^\dagger X h) &= \det(A, h^\dagger h) \det(A, X),\label{twoside_mult}\\
\textrm{ \emph{and} }\det(A, h^{-\dagger} h^{-1}) &= \det(A, h^\dagger h)^{-1}.\label{twoside_inv}
\end{align}
\end{lemma}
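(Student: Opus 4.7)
The plan is to leverage Eq.~\ref{eq:character} together with the multiplicativity of the character $\chi_\textbf a$ on upper triangulars, then pass from the dense set of LU-factorable matrices to arbitrary $X$ by continuity. All three identities will fall out of short character computations once this reduction is in place.

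First I would verify Eq.~\ref{oneside_mult} on the (Zariski-dense) locus of matrices admitting an LU decomposition $X = g_1^\dagger g_2$ with $g_1, g_2$ upper triangular. Since $h$ is upper triangular, $Xh = g_1^\dagger (g_2 h)$ is again a product of an upper-triangular conjugate-transpose and an upper triangular, so Eq.~\ref{eq:character} and the multiplicativity of $\chi_\textbf a$ give
$$\det(A, Xh) = \overline{\chi_\textbf a(g_1)}\,\chi_\textbf a(g_2 h) = \overline{\chi_\textbf a(g_1)}\,\chi_\textbf a(g_2)\,\chi_\textbf a(h) = \det(A, X)\,\chi_\textbf a(h).$$
Specializing Eq.~\ref{eq:character} to $g_1 = I$, $g_2 = h$ identifies $\chi_\textbf a(h) = \det(A, h)$, which establishes Eq.~\ref{oneside_mult} on the LU-factorable locus.

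Next I would verify Eq.~\ref{twoside_mult} by writing $h^\dagger X h = (g_1 h)^\dagger (g_2 h)$ and computing
$$\det(A, h^\dagger X h) = \overline{\chi_\textbf a(g_1 h)}\,\chi_\textbf a(g_2 h) = \overline{\chi_\textbf a(g_1)}\,\chi_\textbf a(g_2)\cdot\overline{\chi_\textbf a(h)}\,\chi_\textbf a(h) = \det(A, X)\,\det(A, h^\dagger h),$$
where the last equality uses Eq.~\ref{eq:character} with $g_1 = g_2 = h$ to identify $\det(A, h^\dagger h) = |\chi_\textbf a(h)|^2$. Both sides of Eq.~\ref{oneside_mult} and Eq.~\ref{twoside_mult} are continuous functions of $X$ (each factor $(\det \eta_j X \eta_j^\dagger)^{\Delta a_j}$ is continuous in $X$ where it is defined), so the identities extend from the dense LU-factorable locus to all $X$. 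Finally, for Eq.~\ref{twoside_inv} I would just apply Eq.~\ref{twoside_mult} with $X = h^{-\dagger} h^{-1}$, so that $h^\dagger X h = I$, and use $\det(A, I) = 1$, which is immediate from Definition~\ref{def:rel_det}.

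There is no substantive obstacle. The only care required is in the continuity step, because $\det(A, X)$ carries non-integer exponents $\Delta a_j$ and must be interpreted consistently on the complex domain; however the identities hold on the dense LU-factorable locus and both sides are continuous there, so they extend everywhere without incident.
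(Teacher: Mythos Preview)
Your proposal is correct and follows exactly the approach the paper indicates: the paper states that the lemma ``immediately'' follows from Eq.~\ref{eq:character} and the density of $LU$-factorable matrices, and you have simply spelled out the character computations and the density/continuity passage that make this immediate. The derivation of Eq.~\ref{twoside_inv} from Eq.~\ref{twoside_mult} by substituting $X = h^{-\dagger}h^{-1}$ is the natural one, and your caveat about branches for non-integer exponents is appropriate and matches the level of care in the paper itself.
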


%Definition \ref{def:rel_det} does not really require $P$ to be diagonal; it is not hard to show that any positive semidefinite Hermitian matrix $A:\CC^l \to \CC^l$ with spectrum $\mathbf{\alpha}$ can be written $A = \sum_{i = 1}^l \Delta\alpha_i\iota_j^\dagger \iota_j$ where $\iota_j$ is the projection to the span of eigenvectors with eigenvalue at least $\alpha_j$. 
We can now define the capacity:
\begin{definition}[Capacity for specified marginals]\label{def:capacity}
%Here we take $0^0 = 1$. Recall from Definition \ref{partial_isom} that $\eta_i:V \to F_i$, $\nu_j:W \to E_j$ are partial isometries.
Define
\begin{align} 
 \capacity(T, P, Q) =  \inf_{h \in \GL(F_\bullet)} \frac{\det(Q, T(h P h^\dagger))}{\det(P, h^\dagger h)}.\label{eq:capacity}
% \textrm{ where }
%\det(P, X) = \prod_{j = 1}^n (\det \eta_j X \eta_j^\dagger)^{p_j - p_{j+1}}.\nonumber
\end{align}
%If $p$-partial flags and $q$-partial flags $E_\circ$ and $F_\circ$ are given, then $\capacity(T, p, q)$ refers to the quantity $\capacity(T, P, Q)$ where $P$ and $Q$ are the unique operators with $\lambda(P) = p$, $\lambda(Q) = q$ and $F_\circ(P) \subset F_\circ$, $F_\circ(Q) \subset E_\circ$.

\end{definition}

\subsection{Main Theorems}

We are ready to state our analogue of Gurvits' characterization for upper-triangular scalings. Recall Gurvits' theorem:
\begin{theorem}[Gurvits \cite{Gu04}]\label{thm:gurvits}
Let $T: \Mat_{n\times n} \CC \to \Mat_{n\times n} \CC$ be a completely positive map. The following are equivalent:
\begin{enumerate}
\item \label{delt_rnd}$T$ is rank-nondecreasing.
\item \label{delt_cap}$\capacity T > 0$. 
\item\label{delt_scal} $T$ is approximately $\GL(E_\bullet) \times \GL(F_\bullet)$-scalable to 
$$(I_n \to I_n, I_n \to I_n).$$
%\item $(p,q)$ is in the convex polytope 
\end{enumerate}
\end{theorem}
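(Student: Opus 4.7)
The plan is to establish the cycle $(3) \Rightarrow (1)$, $(3) \Rightarrow (2)$, $(2) \Rightarrow (3)$, and $(1) \Rightarrow (2)$. The first two are routine, the third is the scaling algorithm, and the last I expect to be the main obstacle.

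For the routine directions, observe that scaling $T \mapsto T_{g,h}$ preserves rank-nondecreasingness (ranks of positive-semidefinite matrices are invariant under invertible conjugation) and multiplies $\capacity T$ by $|\det g|^2 |\det h|^2$. Hence both conditions need only be verified for doubly stochastic $T$ and then transported back via the scaling sequence (using compactness of the Grassmannian for the approximate case). For $(3) \Rightarrow (1)$: if $(L,R)$ is $T$-independent for doubly stochastic $T$, then $T(\pi_R)$ lies in $L^\perp$, has $\Tr T(\pi_R) = \Tr T^*(I_n)\pi_R = \dim R$, and is dominated by $T(I_n) = I_n$; together these force $\dim R \le n - \dim L$. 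For $(3) \Rightarrow (2)$: a matrix van der Waerden type inequality gives $\det T(X) \ge \det X$ for doubly stochastic $T$ on $X \succeq 0$, whence $\capacity T \ge 1$.

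For $(2) \Rightarrow (3)$, I would analyze the alternating Sinkhorn-type iteration that sets $g = T(I_n)^{-1/2}$ (making $T(I_n) = I_n$) and then $h = T^*(I_n)^{-1/2}$, repeating. These steps are well-defined since positive capacity implies $T(I_n), T^*(I_n) \succ 0$ (from $\det T(I_n) \ge \capacity T > 0$ and the symmetry of the characterization in $T$ and $T^*$, itself a consequence of $\pi_L T(\pi_R) = 0 \Leftrightarrow \pi_R T^*(\pi_L) = 0$). After the first step both marginals have trace $n$, and each subsequent step multiplies capacity by $1/\det T(I_n)$ or $1/\det T^*(I_n)$, which is $\ge 1$ by AM-GM on a trace-$n$ PSD matrix; meanwhile capacity is bounded above by $\min(\det T(I_n), \det T^*(I_n)) \le 1$. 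Thus the telescoping series of increments is finite, and the determinant of the non-$I_n$ marginal tends to $1$. Combined with the trace constraint, the AM-GM equality case forces each marginal to converge to $I_n$.

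The hard direction is $(1) \Rightarrow (2)$, which I would argue by contrapositive. Suppose $\capacity T = 0$ and pick $X_k \succ 0$ with $\det X_k = 1$ and $\det T(X_k) \to 0$. If $\lambda_{\min}(X_k)$ stayed bounded below, then $X_k$ would live in a compact subset of $\{X \succ 0\}$; any subsequential limit $X_\infty \succ 0$ would satisfy $\det T(X_\infty) = 0$, contradicting that rank-nondecreasingness forces $T(X_\infty) \succ 0$. So $\lambda_{\min}(X_k) \to 0$; pass to a subsequence along which the eigenvectors of $X_k$ converge and the eigenvalues stratify into groups of matching growth rate. The ``large-eigenvalue'' limit subspace $R$ inherits a block structure for $T(X_k)$, and the rate at which $\det T(X_k) \to 0$ forces $\rank T(\pi_R) < \dim R$. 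Then $(L,R)$ with $L = \operatorname{image}(T(\pi_R))^\perp$ is a $T$-independent pair with $\dim L + \dim R > n$, contradicting rank-nondecreasingness. The delicate step is the stratification and the extraction of the right subspace $R$; this can be made precise either through the Newton polygon of the eigenvalues of $X_k$ or by an induction on dimension that peels off the dominant stratum.
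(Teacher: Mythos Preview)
This theorem is cited from Gurvits \cite{Gu04} and is not proved in the paper; the paper's proof of its generalization (Theorem~\ref{thm:apx_delta_scalable}) in fact \emph{invokes} this result as a black box via the reduction $T\mapsto\trun_{P,Q}T$ of Section~\ref{sec:reduction}. So there is no in-paper argument to compare against directly.

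On the merits of your sketch: $(2)\Rightarrow(3)$ and $(3)\Rightarrow(1)$ are correct and standard. Your $(3)\Rightarrow(2)$ is redundant in the cycle you propose, and calling it routine is optimistic: the inequality $\det T(X)\ge\det X$ for exactly doubly stochastic $T$ does follow from operator concavity of $\log$ together with $T^*(I)=I$, but transporting this back from merely \emph{approximate} scalings requires control on $|\det g_k\det h_k|$ that you have not provided.

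The substantive gap is in $(1)\Rightarrow(2)$. Your stratification idea is sound, but the sentence ``the rate at which $\det T(X_k)\to 0$ forces $\rank T(\pi_R)<\dim R$'' hides the entire difficulty. What is actually needed is a perturbation estimate: for fixed $A_1,\dots,A_m\succeq 0$ with $\sum_j A_j\succ 0$ and scales $\mu_1\gg\cdots\gg\mu_m>0$, one has $\det\bigl(\sum_j\mu_j A_j\bigr)$ of order $\prod_j\mu_j^{s_j}$ where $s_j=\rank(A_1+\cdots+A_j)-\rank(A_1+\cdots+A_{j-1})$. Granting this and applying it with $A_j=T(Y_j)$, Abel summation against the inequalities $\sum_{l\le j}s_l\ge\sum_{l\le j}r_l$ (rank-nondecreasingness applied to each partial flag space $R_j$) shows $\det T(X_k)/\det X_k$ is bounded below, the desired contradiction. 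The estimate itself needs a Schur-complement induction and some uniformity in the moving $Y_j^{(k)}$; this is precisely the content you have deferred to ``Newton polygon or induction on dimension,'' and it is where the real work lies.
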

The $QR$-decomposition shows that in the setting of the above theorem there is no loss of generality in scaling by $\GL(E_\bullet)\times \GL(F_\bullet)$ rather than $\GL_m(\CC) \times \GL_n(\CC)$. This is no longer true in our setting, which is why we need separate theorems for upper-triangular scalings and general scalings.
\begin{theorem}[Main theorem for upper-triangular scalings]\label{thm:apx_delta_scalable}
Let $T, P, Q$ be as in \ref{subsec:prelim}. The following are equivalent:
\begin{enumerate}
\item \label{delt_rnd}$T$ is $(P,Q)$-rank-nondecreasing.
\item \label{delt_cap}$\capacity(T, P, Q) > 0$. 
\item\label{delt_scal} $T$ is approximately $\GL(E_\bullet) \times \GL(F_\bullet)$-scalable to 
$$(I_n \to Q, I_m \to P).$$
%\item $(p,q)$ is in the convex polytope 
\end{enumerate}
\end{theorem}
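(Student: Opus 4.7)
The plan is to prove the equivalence via the cycle (3) $\Rightarrow$ (1) $\Rightarrow$ (2) $\Rightarrow$ (3), relying on the reduction of Section \ref{sec:reduction} to transfer the hard direction from Gurvits' Theorem \ref{thm:gurvits}.

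For (3) $\Rightarrow$ (1), let $T' = T_{g,h}$ be an upper-triangular $\epsilon$-scaling. If $(L,R)$ is $T$-independent then $(L',R') := (g^{-1}L, h^{-1}R)$ is $T'$-independent, and because $g^{-1}$ and $h^{-1}$ preserve the flags, $L' \in \Omega_I^\circ(E_\bullet)$ and $R' \in \Omega_J^\circ(F_\bullet)$ for the same $I,J$ as $L,R$. Using $T'$-independence together with $T'(\pi_{R'}) \preceq T'(I_n)$ gives $\Tr(T'^*(I_m)\pi_{R'}) \leq \Tr((I_m - \pi_{L'})T'(I_n))$, and after substituting the approximate marginal equalities this rearranges to
$$\Tr(Q\pi_{L'}) + \Tr(P\pi_{R'}) \leq \Tr P + O(\epsilon).$$
The lower bound $\Tr(Q\pi_{L'}) \geq \sum_{i \in I} q_i$, proved by choosing an orthonormal basis of $L'$ adapted to the flag and using monotonicity of $\mathbf q$, together with the analogous bound for $R'$ and the Schubert-cell reformulation of Eq.~(\ref{diff_rnd_eq}) in Remark \ref{rem:schubert}, yields (1) in the limit $\epsilon \to 0$.

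For (2) $\Rightarrow$ (3) I would analyze the alternating-minimization Algorithm \ref{alg:alg_g_tri}. By Lemma \ref{detpx_facts}, $\capacity(T,P,Q)$ transforms multiplicatively under upper-triangular scalings; a Cholesky-type rescaling that aligns the diagonal blocks of $T(I_n)$ with $Q$ (resp.\ of $T^*(I_m)$ with $P$) strictly multiplies the potential by an explicit factor depending on the discrepancy between the current and target marginals. Combined with an absolute upper bound on capacity coming from trace normalization, positive initial capacity forces convergence to an $\epsilon$-scaling in $\poly(\epsilon^{-1})$ rounds.

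The main obstacle is (1) $\Rightarrow$ (2), where the reduction of Section \ref{sec:reduction} does essentially all of the work. Given rational $\mathbf p$, $\mathbf q$ with common denominator $N$, the reduction produces an auxiliary completely positive map $\tilde T$ on larger spaces of dimension $N\Tr P = N\Tr Q$, with Kraus operators built from those of $T$ together with block-coordinate embeddings realizing the multiplicities $Np_j$ and $Nq_i$. It then suffices to prove (a) $T$ is $(P,Q)$-rank-nondecreasing if and only if $\tilde T$ is rank-nondecreasing, and (b) a quantitative comparison showing $\capacity \tilde T > 0 \Leftrightarrow \capacity(T,P,Q) > 0$. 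Gurvits' Theorem \ref{thm:gurvits} applied to $\tilde T$ then completes the argument, and the irrational case follows by continuity of both sides in $(\mathbf p,\mathbf q)$. The delicate step is (a): a $\tilde T$-independent pair need not decompose along the block structure of the enlarged spaces, so one must symmetrize using the block-diagonal unitary torus to reduce the check to pairs corresponding to flag-compatible subspaces of $\CC^m, \CC^n$, and then verify via Remark \ref{rem:schubert} that the $\tilde T$-rank-nondecreasing inequalities match Eq.~(\ref{diff_rnd_eq}) exactly.
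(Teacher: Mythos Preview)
Your overall cycle $(3)\Rightarrow(1)\Rightarrow(2)\Rightarrow(3)$ and your handling of $(3)\Rightarrow(1)$, $(2)\Rightarrow(3)$, and the rational case of $(1)\Rightarrow(2)$ all match the paper: the paper proves $(2)\Rightarrow(3)$ via the alternating algorithm (Theorem~\ref{thm:alg_tri_term_stoc} together with Lemma~\ref{inv_red_lite}), $(3)\Rightarrow(1)$ via exactly the trace-inequality computation you sketch (Proposition~\ref{prp:apx_rnd_stoc}), and $(1)\Rightarrow(2)$ for rational marginals by applying Theorem~\ref{thm:gurvits} to $\trun_{P,Q}T$ through Theorem~\ref{thm:stoc_reds}.

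The genuine gap is your passage to irrational $(\mathbf p,\mathbf q)$. You write that ``the irrational case follows by continuity of both sides,'' but $\capacity(T,P,Q)$ is an infimum over the noncompact set $\GL(F_\bullet)$ and is therefore only \emph{upper} semicontinuous in $(\mathbf p,\mathbf q)$: positivity of the capacity along a sequence of rational approximants in $\cK(T,E_\bullet,F_\bullet)$ does not by itself force positivity at the limit point. The paper does not argue by continuity here. Instead it proves (Proposition~\ref{prp:log_concave}) that $e^{H(\mathbf p)}\capacity(T,P,Q)$ is \emph{log-concave} in $(\mathbf p,\mathbf q)$, so the set $\cC$ on which the capacity is positive is convex. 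Since $\cK(T,E_\bullet,F_\bullet)$ is a polytope with rational vertices and the reduction already yields $\cC\cap\QQ^{m+n}=\cK\cap\QQ^{m+n}$, the convex set $\cC$ contains every vertex of $\cK$ and hence all of $\cK$. This convexity step, not a continuity step, is what closes the argument, and it is the idea missing from your sketch.
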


\indent Next we show how to characterize $\GL_m(\CC) \times \GL_n(\CC)$-scalability. It is enough to first scale by a generic element of $\Mat_m(\CC) \times \Mat_n(\CC))$ and then search for scalings to the target by elements of $\GL(E_\bullet)\times \GL(F_\bullet)$. \emph{Generic} means ``for all but those in an affine variety\footnote{set of common zeroes of a family of polynomials} that does contain $\Mat_m(\CC) \times \Mat_n(\CC))$."

\begin{theorem}[Main theorem for general scalings]\label{thm:gln_scalable}
Let $T, P, Q$ be as in \ref{subsec:prelim}. The following are equivalent:
\begin{enumerate}
\item \label{gen_p_q} $T_{g,h}$ is $(P,Q)$-rank-nondecreasing for generic 
$(g^\dagger, h) \in \GL_m(\CC) \times \GL_n(\CC).$
\item \label{gen_cap} $\capacity(T_{g,h}, P, Q) > 0$ for generic
$(g^\dagger, h) \in \GL_m(\CC) \times \GL_n(\CC).$
\item \label{gen_scal} $T$ is approximately scalable to $(I_n \to Q, I_n \to P).$

\end{enumerate}
\end{theorem}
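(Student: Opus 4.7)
My plan is to reduce Theorem \ref{thm:gln_scalable} to the upper-triangular case (Theorem \ref{thm:apx_delta_scalable}) applied pointwise in $(g,h)$, then combine two ingredients: a rounding lemma asserting that any sufficiently accurate $(I_n \to Q, I_m \to P)$-scaling is itself rank-nondecreasing, and a Zariski-closedness argument to upgrade a single witness to a generic statement. The equivalence of (1) and (2) is immediate because, for each fixed $(g,h)$, Theorem \ref{thm:apx_delta_scalable} asserts $(P,Q)$-rank-nondecreasingness of $T_{g,h}$ iff $\capacity(T_{g,h}, P, Q) > 0$, so the two generic conditions cut out the same set. For (1) $\Rightarrow$ (3), fix any $(g_0, h_0)$ in the generic set: Theorem \ref{thm:apx_delta_scalable} then provides, for every $\epsilon > 0$, upper triangular $(g', h')$ so that $T_{g_0 g', h_0 h'} = (T_{g_0, h_0})_{g', h'}$ is an $\epsilon$-$(I_n \to Q, I_m \to P)$-scaling of $T_{g_0, h_0}$, hence a $\GL \times \GL$-scaling of $T$.

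The crux is (3) $\Rightarrow$ (1). First I would establish the \emph{rounding lemma}: if $T' = T_{g_0, h_0}$ satisfies $\|T'(I_n) - Q\| + \|T'^*(I_m) - P\| \leq \epsilon$, and $\epsilon$ is smaller than the minimum positive ``violation margin'' $\delta := \min\{\sum_{i\in I} q_i + \sum_{j\in J} p_j - \Tr P : \sum_{i\in I} q_i + \sum_{j\in J} p_j > \Tr P\}$, then $T'$ is $(P, Q)$-rank-nondecreasing. The proof: for any $T'$-independent pair $(L, R)$,
\begin{align*}
0 \leq \Tr(I_m - \pi_L) T'(I_n - \pi_R) &= \Tr T'(I_n) - \Tr \pi_L T'(I_n) - \langle T'^*(I_m), \pi_R\rangle + \Tr \pi_L T'(\pi_R) \\
 &\leq \Tr Q - \Tr Q \pi_L - \Tr P \pi_R + O(\epsilon),
\end{align*}
using $\pi_L T'(\pi_R) = 0$ by $T'$-independence and $\Tr P = \Tr Q$. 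Hence $\Tr Q \pi_L + \Tr P \pi_R \leq \Tr P + O(\epsilon)$. To convert this into the Schubert inequality, choose an orthonormal basis $u_1, \dots, u_k$ of $L$ adapted to its Schubert type $I = \{i_1 < \dots < i_k\}$, i.e., $u_j \in E_{i_j}$ (possible by definition of $I$); then $\langle u_j, Q u_j \rangle \geq q_{i_j}$ by the nonincreasing ordering of $\mathbf q$ and the fact that the nonzero components of $u_j$ lie among the first $i_j$ coordinates, yielding $\Tr Q \pi_L \geq \sum_{i \in I} q_i$, and symmetrically $\Tr P \pi_R \geq \sum_{j \in J} p_j$. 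So $\sum_{i \in I} q_i + \sum_{j \in J} p_j \leq \Tr P + O(\epsilon)$, ruling out violating $(I, J)$ whenever $\epsilon < \delta$.

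It remains to promote the single witness $(g_0, h_0)$ supplied by the rounding lemma to a generic statement by showing that the bad set is Zariski closed. Using Remark \ref{rem:schubert}, $B = \{(g,h) : T_{g,h} \text{ not } (P,Q)\text{-r.n.d.}\}$ is a finite union, over violating index pairs $(I, J)$, of the sets $B_{I,J} = \{(g, h) : \Omega^\bullet_I(g E_\bullet) \times \Omega^\bullet_J(h F_\bullet) \text{ contains a } T\text{-independent pair}\}$. Each $B_{I,J}$ is the image of the Zariski-closed incidence locus (the Schubert rank conditions together with the polynomial equation $\pi_L T(\pi_R) = 0$) under the projection $\operatorname{Gr}(m, |I|) \times \operatorname{Gr}(n, |J|) \times \GL_m(\CC) \times \GL_n(\CC) \to \GL_m(\CC) \times \GL_n(\CC)$; properness of the Grassmannians makes this projection closed. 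So $B$ is closed, the good set is Zariski open, and the rounding lemma guarantees it is nonempty, hence dense in the irreducible variety $\GL_m(\CC) \times \GL_n(\CC)$.

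The main obstacle is the rounding lemma, specifically the Schubert estimate $\Tr Q \pi_L \geq \sum_{i \in I} q_i$: this is the key conversion from an analytic bound on the marginals to the sharp discrete combinatorial inequality and rests entirely on choosing a Schubert-adapted orthonormal basis together with the nonincreasing ordering of $\mathbf q$. Once the rounding lemma is in place, the algebro-geometric step promoting a pointwise witness to a generic one is essentially formal.
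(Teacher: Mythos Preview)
Your proposal is correct and follows essentially the same architecture as the paper: use Theorem~\ref{thm:apx_delta_scalable} pointwise to link (1) and (2), use a rounding lemma (the paper's Proposition~\ref{prp:apx_rnd_stoc} / Corollary~\ref{cor:sure_rnd}) to extract a single $(P,Q)$-rank-nondecreasing witness from approximate scalability, and then use Zariski closedness of the bad set to promote that witness to a generic statement. Your Schubert estimate $\Tr Q\,\pi_L \ge \sum_{i\in I} q_i$ is exactly the Rayleigh-trace inequality the paper invokes.

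The one substantive difference is in the closedness step. The paper (Lemma~\ref{lem:exists_iff_generic_stoc}) works in affine matrix coordinates: for rational spectra it exhibits explicit defining polynomials via the reduction $\trun_{P,Q}$ and Derksen--Makam's degree bound, while for irrational spectra it appeals to Chevalley's theorem on constructible images and then observes the image is Euclidean-closed, hence Zariski-closed. You instead parametrize the witnessing pair $(L,R)$ by a product of Grassmannians and use that projection from a proper variety is closed. Your route is cleaner and works uniformly for all spectra, avoiding both the reduction machinery and the Chevalley/Euclidean-closure detour; the paper's route, on the other hand, yields explicit degree bounds on the defining polynomials (at most $2(\Tr P)^2$ in the integral case), which are what feed into the Schwartz--Zippel step of the algorithmic Theorem~\ref{thm:gln_alg}. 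One small wrinkle: your argument shows the bad locus is closed in the $(g,h)$ coordinates, whereas the theorem is phrased in terms of generic $(g^\dagger,h)$; this is harmless for the equivalence (a nonempty Zariski-open set in either parametrization already gives all three implications), but you should remark on it.
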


The set $\{(\mathbf{p}, \mathbf{q}): T\textrm{ is }(P,Q)\textrm{-rank-nondecreasing}\},$ which we denote $\cK(T, E_\bullet, F_\bullet)$, is a convex polytope since it is defined by a finite number of linear constraints. Less obviously, the set of $(\mathbf{p}, \mathbf{q})$ such that $T$ is approximately $(\GL_m(\CC),\GL_n(\CC)$-scalable to $(I_m \to Q, I_n \to P)$ also forms a convex polytope, which we denote $\cK(T)$. This will follow from the proof of Theorem \ref{thm:gln_scalable}.\\
%\indent Theorems \ref{thm:apx_delta_scalable} and \ref{thm:gln_scalable} can be improved when $T$ has additional structure. Specifically, under some additional conditions on $T, G$ and $H$, we can assume that the scalings of $T$ are actually scalings of $T$ by $g \in G$ and $h \in H$. This is quite important for the reductions of Questions \ref{qn:rc_scaling}, \ref{hermitians}, and \ref{forst_quest_stoc} to Question \ref{amcos_four_stoc}. \\
\indent We also prove an algorithmic counterpart of 
%Theorem \ref{apx_delta_scalable} and 
Theorem \ref{thm:gln_scalable}. 
\begin{definition}[bit-complexity]\label{dfn:bit_complexity} 
 Say $T, P, Q$ have \emph{bit-complexity at most $b$} if
 \begin{enumerate}
 \item the entries of the matrices $A_1, \dots, A_r$ are $a + bi$ where $a, b$ are binary,
 \item the entries of the vectors $\mathbf{p}$, and $\mathbf{q}$ are binary positive numbers which individually sum to one, and 
 \item the sum total of the number of digits from all of the entries above plus $\log_2 r + \log_2 m + \log_2 n$ is at most $b$.
 \end{enumerate}
\end{definition}

%\begin{theorem}\label{thm:rounding_works_stoc} There is an algorithm $\cA$ of time-complexity\\ $\poly(\epsilon^{-1}, p_n^{-1}, q_m^{-1}, n,m,b)$ that takes as input $T, P, Q, \epsilon$ and outputs $g \in \GL(E_\bullet)$ and $h \in \GL(F_\bullet)$ such that $T_{g,h}$ is an $\epsilon$-$(I_m \to Q,I_n \to P)$-scaling (whenever $\capacity(T, P, Q) >0$, and ERROR otherwise).\end{theorem}

\begin{theorem}[Scaling algorithm]\label{thm:gln_alg} There is a randomized algorithm $\cA$ of time-complexity\\ $\poly(\epsilon^{-1}, p_n^{-1}, q_m^{-1}, b)$ that takes as input $T, P, Q$ of bit-complexity at most $b$ and $\epsilon> 0$ and outputs either ERROR or $(g, h) \in \GL_m(\CC)\times\GL_n(\CC)$ such that $T_{g,h}$ is an $\epsilon$-$(I_n \to Q,I_m \to P)$-scaling;\\
\indent  If $T$ is approximately $\GL_m(\CC) \times \GL_n(\CC)$-scalable to $(I_n \to Q, I_m \to P)$ then $\cA$ outputs ERROR with probability at most $1/3$.\end{theorem}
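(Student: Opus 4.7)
The plan is to reduce this to the triangular scaling case via the genericity statement of Theorem \ref{thm:gln_scalable}. Specifically, $\cA$ will first sample a random perturbation $(g,h) \in \Mat_m(\CC) \times \Mat_n(\CC)$ by drawing each entry independently and uniformly from $[N]$ for $N = \poly(2^b, m, n)$ chosen below, returning ERROR if either matrix is singular. It will then invoke Algorithm \ref{alg:alg_g_tri} on the scaled operator $T_{g,h}$ with target marginals $(P,Q)$ and tolerance $\epsilon$, and return the composite scaling $(g_\Delta g,\, h_\Delta h)$ if Algorithm \ref{alg:alg_g_tri} produces an upper-triangular scaling $(g_\Delta, h_\Delta)$, and ERROR otherwise. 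Since Algorithm \ref{alg:alg_g_tri} is guaranteed by Theorem \ref{thm:apx_delta_scalable} to output a valid $\epsilon$-scaling whenever it does not return ERROR, correctness of $\cA$'s output (when it succeeds) is immediate, and can additionally be verified in polynomial time.

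For the one-sided error, suppose $T$ is approximately $\GL_m(\CC) \times \GL_n(\CC)$-scalable to $(I_n \to Q, I_m \to P)$. By Theorem \ref{thm:gln_scalable}, there is a proper affine variety $\mathcal{V} \subset \Mat_m(\CC) \times \Mat_n(\CC)$ outside of which $T_{g,h}$ is $(P,Q)$-rank-nondecreasing. Each condition defining rank-nondecreasingness in Remark \ref{rem:schubert}---the non-emptiness of $\Omega_I^\bullet(E_\bullet) \times \Omega_J^\bullet(F_\bullet)$ intersected with the $T_{g,h}$-independent pairs---is expressed by the vanishing of polynomials in the entries of $(g,h)$ of degree $\poly(n,m)$. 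Applying Schwartz--Zippel, the random draw lies outside $\mathcal{V}$, and moreover avoids singular $g$ or $h$, with probability at least $1 - \poly(n,m)/N$; this is at most $1/6$ for $N = \poly(2^b, m, n)$. Conditioned on this event, Theorem \ref{thm:apx_delta_scalable} ensures $T_{g,h}$ is approximately $\GL(E_\bullet)\times\GL(F_\bullet)$-scalable, so Algorithm \ref{alg:alg_g_tri} succeeds except with additional probability at most $1/6$, for a total failure probability of at most $1/3$.

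The main obstacle, and the source of the delicate dependence on $\epsilon^{-1}, p_n^{-1}, q_m^{-1}, b$ in the running time, is quantitative control on $T_{g,h}$. The Kraus operators of $T_{g,h}$ have bit-complexity at most $b' = \poly(b, \log m, \log n, \log N) = \poly(b, m, n)$. More importantly, the capacity $\capacity(T_{g,h}, P, Q)$ from Definition \ref{def:capacity}, which (by Theorem \ref{thm:apx_delta_scalable}) governs the number of iterations Algorithm \ref{alg:alg_g_tri} must perform, is a well-defined positive quantity by our genericity assumption but can be small. One must show via a quantitative Schwartz--Zippel-style argument---applied to an appropriate polynomial witness for the capacity being bounded away from zero, e.g.\ a sum of squared minors whose non-vanishing is equivalent to $(P,Q)$-rank-nondecreasingness on $(g,h)$---that the integer choice of $(g,h)$ yields $\capacity(T_{g,h}, P, Q) \geq 2^{-\poly(b,m,n)}$ with probability at least $5/6$. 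Combined with the $\poly(\epsilon^{-1}, p_n^{-1}, q_m^{-1}, \log \capacity(T_{g,h}, P, Q)^{-1}, b')$ running time guarantee from Theorem \ref{thm:apx_delta_scalable}, this delivers the claimed $\poly(\epsilon^{-1}, p_n^{-1}, q_m^{-1}, b)$ bound on the total running time of $\cA$.
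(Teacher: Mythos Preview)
Your overall architecture---random perturbation followed by the triangular scaling algorithm---is exactly what the paper does in Proposition~\ref{prp:toy_works_app}. But two of your quantitative claims are off, and one of them is the heart of the matter.

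First, the degree bound. You assert that the polynomials cutting out the bad locus have degree $\poly(n,m)$, appealing to the Schubert-variety description in Remark~\ref{rem:schubert}. That description involves an existential quantifier over pairs $(L,R)$, and eliminating it (via Chevalley's theorem, as in the proof of Lemma~\ref{lem:exists_iff_generic_stoc}) gives no control on degree. The paper instead obtains an explicit degree bound of $2(\Tr P)^2 \le 2\cdot 2^{2b}$ by passing through the reduction $\trun_{P,Q}T$ and invoking the Derksen--Makam bound (Theorem~\ref{thm:derksen}) on the polynomials of Definition~\ref{dfn:derksen_polies}. So the sample space must have size $\Theta(2^{2b})$, not merely $\poly(2^b,m,n)$ for the reason you state. (Your $N$ happens to be large enough under a generous reading of ``$\poly$'', but the justification is wrong.)

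Second, and more seriously, your capacity lower bound argument is a genuine gap. You propose a ``quantitative Schwartz--Zippel'' applied to a polynomial witness of rank-nondecreasingness, but non-vanishing of such a witness only gives $\capacity(T_{g,h},P,Q)>0$; there is no mechanism here linking the \emph{value} of that polynomial to the \emph{magnitude} of the capacity. The paper's route is completely different and does not use randomness at all for this step: Theorem~\ref{thm:stoc_lower_bd} shows that \emph{any} $(P,Q)$-rank-nondecreasing operator of bit-complexity $b'$ has capacity at least $e^{-10b'}$, by applying the GGOW capacity lower bound (Theorem~\ref{thm:cap_lower_bd}) to $\trun_{NP,NQ}T$. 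Since the random $(g_0,h_0)$ has integer entries of size $O(2^{b})$, the perturbed operator $T_{g_0,h_0}$ has bit-complexity $O(b)$, and the capacity bound follows deterministically once Schwartz--Zippel has certified rank-nondecreasingness. This is the missing idea in your proposal.

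A minor point: Algorithm~\ref{alg:alg_g_tri} is deterministic, so there is no ``additional probability $1/6$'' of failure once the random step succeeds.
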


Our algorithm will actually find $\epsilon$-$(P \to I_m, Q \to I_n)$ scalings of $T$, but we will see that this is equivalent when the scalings are nonsingular. The nonsingularity of the scalings is also without loss of generality; we will also show how to reduce to the case in which the marginals are nonsingular. Algorithm \ref{alg:informal_scaling} is not exactly $\cA$ of Theorem \ref{thm:gln_alg}; rounding between scaling steps is required to avoid a blow-up in bit-complexity. See Remark \ref{rem:bit_complexity} for a discussion of these issues.
\begin{Algorithm}[th!]
\begin{flushleft}
\textbf{Input:} $T, P, Q$ of bit-complexity at most $b$.\\
 \vspace{.25cm}
\textbf{Output:} Either a pair $(g,h)$ such that $T_{g,h}$ is an $\epsilon$-$(P \to I_m, Q \to I_n)$ scaling of $T$, or ERROR. \\
% If $T$ is approximately $\GL_m(\CC) \times \GL_n(\CC)$-scalable to $(I_n \to Q, I_m \to P)$, then $\Pr[ERROR < 1/3]$.\\
 \vspace{.25cm}
\textbf{Algorithm:}
\begin{enumerate}
\item %Let $0<\gamma \leq 2^{2b}$ an integer such that $\gamma P$ and $\gamma Q$ have integral spectra. 
Choose each entry of $(g_0, h_0) \in \Mat_{m\times m}(\CC)\times \Mat_{n \times n}(\CC)$ independently and uniformly at random from $[6 \cdot 2^{b}]$. If $g_0$ or $h_0$ is singular, \textbf{return} ERROR.
\item For $j \in [TIME = \poly(\epsilon^{-1}, p_n^{-1}, q_m^{-1}, b)]$:
\begin{enumerate}
\item 
\begin{description}
\item[If $j$ is odd:] Find $g\in \GL({E_\bullet})$ such that $ g^\dagger T (h_{j-1}Ph_{j-1}^\dagger)g  = I$. Set $g_j = g$ and $h_j = h_{j-1}$.
\item[If $j$ is even:] Find $h \in \GL({F_\bullet})$ such that $ h^\dagger T^* (g_{j-1}Qg_{j-1}^\dagger)h  = I$. Set $g_{j} = g_{j-1}$ and  $h_j = h$.
\end{description}
If this was not possible, \textbf{return} ERROR.
\item If $ T_{g_j,h_j}$ is an $\epsilon$-$(P \to I_m, Q \to I_n)$ scaling of $T$,\\
 \textbf{return} $(g_j, h_j)$. 
\end{enumerate}
\item \textbf{Return} ERROR.
\end{enumerate}
\caption{Scaling algorithm.}\label{alg:informal_scaling}
\end{flushleft}
\end{Algorithm}

%%%
\subsection{Techniques}
Here we list the main technical issues that arise and how we overcome them.
%\begin{itemize}
%\item \textbf{Reduction to the doubly stochastic case:}
\subsubsection*{\textbf{Reduction to the Doubly Stochastic Case.}}
The main difficulty in proving Theorem \ref{thm:apx_delta_scalable}, the characterization of upper-triangular scalability, is in guessing the correct notions of capacity and rank-nondecreasingness. In fact, they were not guessed directly, but rather follow from the existence of a reduction to the doubly stochastic case. The reduction, which is only defined when $P, Q$ have integral spectra, is a map
$$ T \mapsto \trun_{P,Q} T,$$
where $\trun_{P,Q} T:\Mat_{\Tr P, \Tr P} \to \Mat_{\Tr Q, \Tr Q}$ is yet another completely  positive map. $\trun_{P,Q}$ has the property that $T$ is scalable to $(I_n \to Q, I_m \to P)$ if and only if  $\trun_{P,Q} T$ can be scaled to doubly stochastic.\\
\indent One might be tempted to prove our main theorems by applying the results from \cite{Gu04} directly to $\trun_{P,Q} T$. However, this has two drawbacks: firstly, the algorithms are only guaranteed to run in polynomial time if the spectra of $P$ and $Q$ are represented in \emph{unary}. Secondly, the reduction does not work for irrational spectra, and so separate reasoning is necessary to extend Theorem \ref{thm:apx_delta_scalable} to that case.
%\item\textbf{Effective algorithms:}
\subsubsection*{\textbf{Running Time.}}

After an initial random scaling step, our algorithm is a natural variant of the Sinkhorn-Knopp algorithm for matrix scaling \cite{Si64}. This algorithm alternately scales so that $T(P) = I$ or $T^*(Q) = I$, which are individually easy to enforce. Each step will increase $\capacity(T, P, Q)$ unless $T$ is very close to mapping $(P \to I_m, Q \to I_n)$. While this proves that nonzero capacity implies approximate $(P \to I_m, Q \to I_n)$-scalability, it will not give any upper bound on the number of scaling steps unless we have a lower bound on the capacity.\\
\indent The lower bound ontained by computing the capacity lower bounds from \cite{GGOW16} on $\trun_{P,Q} T$ is not sufficient to prove Theorem \ref{thm:gln_alg}, which asserts the existence of a polynomial time algorithm for scaling. This lower bound relies on bounds on the degrees of polynomials invariant under scaling \cite{DM17}. However, the required degree bound has since been improved - surprisingly, this seemingly minor improvement is enough. We are able to make a further improvement using the fact that $e^{H(\textbf{p})}\capacity(T, P, Q)$ is \emph{log-concave in $\textbf{p}$ and $\textbf{q}$}. This results in a lower bound for $\capacity(T, P, Q)$ which depends only on $T, m, $ and $n$ subject to $\Tr P = \Tr Q = 1$. 

%%%
\subsubsection*{\textbf{General Scalings from Triangular Scalings}} The definition of the reduction $\trun_{P,Q}$, the proofs of its properties, and the proof of Theorem \ref{thm:apx_delta_scalable} require only the results of \cite{Gu04}, elementary linear algebra, and calculus.\\
\indent Theorem \ref{thm:gln_scalable} and Theorem \ref{thm:gln_alg}, however, require some algebraic geometry. Both use that $T_{g,h}$ fails to be $(P,Q)$-rank-nondecreasing if and only if $(g^\dagger,h)$ are in some affine variety depending only on $P$ and $Q$. When the spectra of $P$ and $Q$ are rational the algebraic geometry is elementary\footnote{only after relying on \cite{DM17} to find the polynomials!}. When the spectra are irrational we require Chevalley's Theorem on quantifier elimination in algebra (see \cite{M99}).
%In order to extend Theorem \ref{thm:apx_delta_scalable} to Theorem \ref{thm:gln_scalable}, we'll need to use some algebraic geometry. When the spectra are rational, the algebraic geometry becomes elementary (again with \cite{DM17} in hand). When the spectra are irrational, however, one needs to use Chevalley's Theorem, which states that the projection of the common zeroes of a family of polynomials that is closed in the Euclidean topology is again the common zeroes of some family of polynomials.

%$\trun_{P,Q} T$ is defined, proving that it works, and hence proving Theorem \ref{thm:apx_delta_scalable}, the generalization of Gurvits' theorem for upper-triangular scalings. The proof of Theorem \ref{thm:apx_delta_scalable}

 % classical capacity rewrite
%We define $\nu_j:\CC^m \to \CC^j$ similarly. Then 
%$$ \capacity_{\textbf{r}, \textbf{c}} A = \inf_{x_j > 0}\frac{\prod_{i = 1}^m  \left(\prod_{k = 1}^{i} (\nu_i A\textbf{x})_k\right)^{\Delta r_i}}{\prod_{j = 1}^n \left(\prod _{l = 1}^j (\eta_j\textbf{x}_l \right)^{\Delta c_j}}$$
%%%
%\end{itemize}

%%%
%%%
%%%
\section{Reduction to the doubly stochastic case}\label{sec:reduction}
%%%
%%%
%%%
In this section we prove Theorem \ref{thm:stoc_reds} below. The reduction, a map $T \mapsto \trun_{P,Q} T$, is inspired by the reduction between instances of the Brascamp-Lieb inequality in \cite{GGOWbl16}. Item 1 will be obvious from our construction. We only prove Items 2 and 3 in this section; we prove Item 4 in Appendix \ref{app:reduction}. The attentive reader will notice that we do not use Item 2 directly in the proof of Theorem \ref{thm:apx_delta_scalable}, but we include the proof anyway to motivate our construction.
 
\begin{theorem}\label{thm:stoc_reds}
Suppose $\mathbf{p}$ and $\mathbf{q}$ are integral and that the sum of each is $N$. There exists a map $T \mapsto \trun_{P,Q} T$ such that 
\begin{enumerate}
\item $\trun_{P,Q} T: \Mat_{N \times N} \CC \to \Mat_{N \times N} \CC$ is a completely positive operator.
\item If, in addition, $\textbf p $ and $\textbf q$ are positive, \label{stoc_reds:item:scaling} $\trun_{P,Q} T$ is approximately $\GL_N(\CC) \times \GL_N(\CC)$-scalable to $(I_N \to I_N, I_N \to I_N)$ if and only if $T$ is approximately $\GL(E_\bullet)\times \GL(F_\bullet)$-scalable to $(I_n \to Q, I_m \to P)$.
\item \label{stoc_reds:item:capacity} $\capacity \trun_{P,Q} T = \capacity(T, P, Q)$, and 
\item $\trun_{P,Q} T $ is rank-nondecreasing if and only if $T$ is $(P,Q)$-rank-nondecreasing.
\end{enumerate}
\end{theorem}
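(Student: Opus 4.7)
The plan is to define $\trun_{P,Q} T$ and then verify Items~1--3 essentially by direct computation, Item~1 being immediate from the construction. The guiding design principle is that the relative determinant $\det(P, M) = \prod_{j=1}^n (\det \eta_j M \eta_j^\dagger)^{\Delta p_j}$, which for integral $\mathbf{p}$ is polynomial in $M$, should appear as a plain determinant of an $N\times N$ matrix after the reduction.

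\textbf{Construction.} Using the identity $\sum_{j=1}^n j\,\Delta p_j = \sum_j p_j = N$, I would decompose
\[
\CC^N \;\cong\; \bigoplus_{j=1}^n F_j^{\oplus \Delta p_j}
\qquad\text{and}\qquad
\CC^N \;\cong\; \bigoplus_{i=1}^m F_i^{\oplus \Delta q_i},
\]
with $F_j \subset \CC^n$ and $F_i \subset \CC^m$ the subspaces of the standard flags. The Kraus operators of $\trun_{P,Q} T$ are then indexed by $(k, j, s, i, \ell)$ with $k \in [r]$, $j \in [n]$, $i \in [m]$, $s \in [\Delta p_j]$, $\ell \in [\Delta q_i]$; each is the $N \times N$ matrix whose only nonzero block (with respect to the decompositions above) is $\eta_i A_k \eta_j^\dagger$ in position $((i,\ell),(j,s))$. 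Item~1 is then immediate since every such Kraus operator is a genuine linear map $\CC^N \to \CC^N$.

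\textbf{Item~3 (capacity).} I would compute $\capacity \trun_{P,Q}T$ using the fact (via density of $LU$-factorable matrices) that the infimum can be restricted to $X = \tilde h^\dagger \tilde h$ for $\tilde h$ upper triangular, and then --- by a Gurvits-style monotonicity argument tailored to the block structure --- further restricted to $\tilde h$ upper-block-triangular with respect to the input decomposition. Such a $\tilde h$ is parameterized by a single $h \in \GL(F_\bullet)$ together with a residual freedom on each multiplicity factor $\CC^{\Delta p_j}$; these residual factors contribute identical multiplicative terms to the numerator and denominator of $\det \trun_{P,Q}T(\tilde h^\dagger \tilde h)/\det \tilde h^\dagger \tilde h$ and therefore cancel. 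Invoking Lemma~\ref{detpx_facts} to manipulate the remaining factors, the ratio reduces exactly to $\det(Q, T(hPh^\dagger))/\det(P, h^\dagger h)$, and taking the infimum over $h$ recovers $\capacity(T,P,Q)$.

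\textbf{Item~2 (scalability).} With $\mathbf{p}, \mathbf{q} > 0$, both implications follow from explicit lifting and descent. Given an $\epsilon$-scaling of $T$ by $(g,h) \in \GL(E_\bullet)\times\GL(F_\bullet)$ to $(I_n \to Q, I_m \to P)$, the pair $(\tilde g, \tilde h)$ obtained by acting by $g$ (resp.\ $h$) on each $F_i$ (resp.\ $F_j$) summand and by the identity on the multiplicity factors is upper-block-triangular; a short computation using the Kraus decomposition shows it gives a $\poly(\epsilon, N)$-doubly-stochastic scaling of $\trun_{P,Q} T$. For the reverse direction, given an $\epsilon$-doubly-stochastic scaling of $\trun_{P,Q} T$, I would first replace the scaling by its upper-block-triangular $QR$-factor (at no cost, by unitary invariance of the marginal conditions) and then read off a triangular scaling of $T$ from its top-left $n \times n$ and $m \times m$ corners; positivity of $\mathbf{p}, \mathbf{q}$ ensures that every multiplicity block actually appears, forcing the marginals of the extracted scaling of $T$ to approach $Q$ and $P$ with error controlled by $\epsilon$.

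\textbf{Main obstacle.} The crux is arranging the construction so that Items~1--3 all fall out without slack. For Item~3, the parameterization must be such that the residual multiplicity-space factors cancel exactly, which pins down a specific basis ordering on $\CC^N$. For Item~2 the reverse direction is the harder half: converting a scaling of $\trun_{P,Q} T$ back to one of $T$ requires quantitatively using the positivity of $\mathbf{p}, \mathbf{q}$ to ensure the block structure can be extracted from any near-doubly-stochastic scaling, since a vanishing $p_j$ or $q_i$ would collapse the corresponding multiplicity factor.
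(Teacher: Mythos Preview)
Your construction is essentially the paper's: grouping by block-size $j$ with multiplicity $\Delta p_j$ is the same as the paper's conjugate-partition decomposition $\bigoplus_k \CC^{p'_k}$, and your Kraus operators match Proposition~\ref{prp:reduction_kraus}. The forward direction of Item~2 (lifting a scaling of $T$ to one of $\trun_{P,Q}T$) is also right and matches the paper.

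The gap is in the reverse direction of Item~2 and, for the same reason, in the restriction step of Item~3. Your plan for Item~2 is: QR-factor an arbitrary near-doubly-stochastic scaling $(\tilde g,\tilde h)$ to make it upper triangular, then read off $(g,h)$ from the top-left $m\times m$ and $n\times n$ corners. This does not work. The marginal $\tilde g^\dagger(\trun_{P,Q}T)(\tilde h\tilde h^\dagger)\tilde g$ restricted to its top-left block still depends on \emph{all} off-diagonal blocks of $\tilde h$ (because $[\tilde h\tilde h^\dagger]_{kk}=\sum_{l\ge k}\tilde h_{kl}\tilde h_{kl}^\dagger$ and $G_{\mathbf p}^*$ sums over all diagonal blocks), so closeness to $I_N$ does not force the extracted $(g,h)$ to scale $T$ near $(I_n\to Q,I_m\to P)$. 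Likewise in Item~3, a generic block-upper-triangular $\tilde h$ is \emph{not} ``a single $h\in\GL(F_\bullet)$ plus multiplicity freedom''---the blocks are independent---so the cancellation you describe does not occur.

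What the paper does instead is isolate the key structural fact you are missing: the gadget satisfies the homomorphism property $G_\lambda(Xh)=G_\lambda(X)G_\lambda(h)$ for upper triangular $h$ (Lemma~\ref{lem:reduction_gadget}), hence scaling $T$ by $(g,h)\in\GL(E_\bullet)\times\GL(F_\bullet)$ corresponds exactly to scaling $\trun_{P,Q}T$ by $(G_{\mathbf q}(g),G_{\mathbf p}(h))$ (Proposition~\ref{prp:reduction_scaling}). This gives a specific small subgroup $G\times H=G_{\mathbf q}\GL(E_\bullet)\times G_{\mathbf p}\GL(F_\bullet)$. The paper then shows that the Cholesky/Sinkhorn normalization step on $\trun_{P,Q}T$ can always be taken inside $G\times H$: if $(\trun_{P,Q}T)_{g,h}(I_N)$ is to be normalized, one normalizes $T_{g_0,h_0}(P)$ downstairs and lifts via $G_{\mathbf q}$. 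Since Gurvits' theorem guarantees Sinkhorn converges whenever $\trun_{P,Q}T$ is approximately doubly-stochastic-scalable, this simultaneously proves the reverse direction of Item~2 and (by the same argument applied to the capacity ratio) that the infimum defining $\capacity\trun_{P,Q}T$ may be restricted to $H$ without loss (Lemma~\ref{lem:capacity_prime}). Your ``Gurvits-style monotonicity argument'' is gesturing at this, but the mechanism is the homomorphism property, not a direct monotonicity or corner-extraction.
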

 We first design a ``gadget" to compose with $T$ to enforce the marginal conditions of $\trun_{P,Q} T$.  Recall that a partition $\lambda$ of a nonnegative integer $l$ with $k$ parts is a weakly decreasing sequence $(\lambda_1, \dots, \lambda_k)$ of nonnegative integers summing to $l$.
 
\begin{lemma}\label{lem:reduction_gadget}
For any partition $\lambda = (\lambda_1, \dots, \lambda_k)$ of $l$ with $\lambda_k > 0$, there is an injective, completely positive map $ G_\lambda:\Mat_{k\times k} \CC \to \Mat_{l \times l} \CC$ such that 
\begin{align}G_\lambda(I_k) = I_l \textrm{ and }G_\lambda^*(I_l) = \diag(\lambda).\label{eq:gadget_marg} \end{align}

$G_\lambda$ further satisfies 
\begin{align}G_\lambda(Xh) = G_\lambda(X)G_\lambda(h)\label{eq:gadget_hom}
\end{align}
 for any $X \in \Mat_{k\times k}(\CC)$ and any upper-triangular $h \in \Mat_{k\times k}(\CC),$ and 
 \begin{align}\det G_\lambda(X) = \det(\diag(\lambda), X).\label{eq:gadget_determinant} \end{align}
\end{lemma}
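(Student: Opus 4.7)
The plan is to build $G_\lambda$ as a block-diagonal embedding indexed by the columns of the Young diagram of $\lambda$. Let $\lambda'_j := |\{i : \lambda_i \geq j\}|$ denote the length of the $j$-th column, so $\lambda'_1 = k$ (since $\lambda_k > 0$) and $\sum_{j=1}^{\lambda_1} \lambda'_j = l$. Fix any bijection $\phi$ between the cells $\{(i,j) : 1 \leq j \leq \lambda_1,\ 1 \leq i \leq \lambda'_j\}$ of the diagram and $[l]$. For each $j \in [\lambda_1]$ I will define a Kraus operator $A_j : \CC^k \to \CC^l$ by $A_j e_i := e_{\phi(i,j)}$ if $i \leq \lambda'_j$ and $A_j e_i := 0$ otherwise, and set $G_\lambda(X) := \sum_{j=1}^{\lambda_1} A_j X A_j^\dagger$.

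With this choice the marginal identities \ref{eq:gadget_marg} are immediate: $\sum_j A_j A_j^\dagger$ is the sum of the rank-one projections onto $e_{\phi(i,j)}$ as $(i,j)$ ranges over the cells of the diagram, hence equals $I_l$; and $\sum_j A_j^\dagger A_j = \sum_j \sum_{i \leq \lambda'_j} e_i e_i^\dagger = \sum_i \lambda_i e_i e_i^\dagger = \diag(\lambda)$. The structural fact driving the remaining properties is that, after conjugating by the permutation $\phi$, $G_\lambda(X)$ is block-diagonal with the $j$-th block equal to the principal submatrix $\eta_{\lambda'_j} X \eta_{\lambda'_j}^\dagger$: the nonzero entries of $A_j X A_j^\dagger$ sit at positions $(\phi(i,j),\phi(i',j))$ with value $X_{ii'}$, so blocks from different columns do not overlap.

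Granted that block-diagonal description, the rest is bookkeeping. Injectivity holds because $\lambda'_1 = k$ makes $X$ itself one of the blocks. The determinant formula \ref{eq:gadget_determinant} follows from counting the $j$'s with $\lambda'_j = i$, which is exactly $\Delta \lambda_i$, giving
\[\det G_\lambda(X) = \prod_j \det \eta_{\lambda'_j} X \eta_{\lambda'_j}^\dagger = \prod_i (\det \eta_i X \eta_i^\dagger)^{\Delta \lambda_i} = \det(\diag(\lambda), X).\]
For the multiplicativity \ref{eq:gadget_hom} it is enough to check that $\eta_r(Xh)\eta_r^\dagger = (\eta_r X \eta_r^\dagger)(\eta_r h \eta_r^\dagger)$ for every $r$ whenever $h$ is upper triangular, which is a one-line computation: for $a,b \leq r$ one has $(Xh)_{ab} = \sum_c X_{ac} h_{cb} = \sum_{c \leq b} X_{ac} h_{cb}$ by upper-triangularity, and the sum only touches indices $c \leq b \leq r$, so it equals the $(a,b)$-entry of the product of the two principal submatrices.

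I do not expect a real obstacle here, since the lemma is a setup construction: the only conceptual step is choosing the right gadget---namely indexing Kraus operators by columns of the Young diagram so that the image is, up to a fixed permutation, a direct sum of leading principal submatrices weighted by the multiplicities $\Delta \lambda_i$. Once that is in place, all four assertions follow by direct verification.
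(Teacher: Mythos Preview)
Your construction is correct and is essentially the same as the paper's: the paper defines $G_\lambda(X)=\bigoplus_{j=1}^{\lambda_1}\eta_{\lambda'_j}X\eta_{\lambda'_j}^\dagger$ directly as a block-diagonal map, which is exactly your $\sum_j A_j X A_j^\dagger$ after fixing the bijection $\phi$ to lay the columns out contiguously. Your verifications of the marginals, the determinant count via $|\{j:\lambda'_j=i\}|=\Delta\lambda_i$, and the multiplicativity via $\eta_r(Xh)\eta_r^\dagger=(\eta_r X\eta_r^\dagger)(\eta_r h\eta_r^\dagger)$ for upper-triangular $h$ match the paper's arguments (the last is the paper's Proposition on $h\eta_j^\dagger=\eta_j^\dagger\eta_j h\eta_j^\dagger$, just written from the other side).
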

We delay the proof of Lemma \ref{lem:reduction_gadget} to Section \ref{subsec:reduction_gadget}, before which we show how to use Lemma \ref{lem:reduction_gadget} to construct and prove all the properties of the reduction.
\begin{definition} Think of $\mathbf{p}$ and $\mathbf{q}$ as partitions of $N$. Define 
$$\trun_{P,Q} T := G_{\mathbf{q}} \circ T \circ G_{\mathbf{p}}^*.$$
\end{definition}

% % %
\subsection{Scalability Under the Reduction}
We now show Item \ref{stoc_reds:item:scaling} of Theorem \ref{thm:stoc_reds} with $\GL_N(\CC) \times \GL_N(\CC)$ replaced by a smaller group $G\times H$; afterwards we will extend the proof using \cite{Gu04}.\\
 \indent Here is a useful observation that simplifies our techniques; the proof is an easy change of variables argument.
\begin{proposition}\label{prp:in_out} Suppose $P$ and $Q$ are nonsingular. Then
$T$ is $\GL(E_\bullet)\times \GL(F_\bullet)$-scalable to $(P \to I_m, Q \to I_m)$ if and only if 
$T$ is $\GL(E_\bullet)\times \GL(F_\bullet)$-scalable to $(I_m \to Q, I_n \to P)$.
\end{proposition}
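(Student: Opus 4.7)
The plan is to exhibit an explicit bijection of the scaling group that converts one target pair of marginals into the other. Because we have placed ourselves in the normal form where $P = \diag(\mathbf p)$ and $Q = \diag(\mathbf q)$, the square roots $P^{1/2}$ and $Q^{1/2}$ are diagonal matrices, and since $P, Q$ are assumed nonsingular they are invertible. In particular $Q^{1/2} \in \GL(E_\bullet)$ and $P^{1/2} \in \GL(F_\bullet)$. Define the map
\[
\phi : \GL(E_\bullet) \times \GL(F_\bullet) \to \GL(E_\bullet) \times \GL(F_\bullet), \qquad \phi(g,h) = \bigl(g\,Q^{1/2},\; h\,P^{1/2}\bigr),
\]
which is a bijection because right-multiplication by an invertible element of $\GL(E_\bullet)$ (resp.\ $\GL(F_\bullet)$) is a bijection of that group.

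The key computation is simply unwinding the definition of a scaling. Setting $(g',h') = \phi(g,h)$, and using that $P^{1/2}, Q^{1/2}$ are self-adjoint,
\[
T_{g',h'}(I_n) = (g')^\dagger T(h' I_n (h')^\dagger) g' = Q^{1/2} g^\dagger T(h P h^\dagger) g\, Q^{1/2} = Q^{1/2}\, T_{g,h}(P)\, Q^{1/2},
\]
and analogously, using $T^*_{g',h'}(Y) = (h')^\dagger T^*(g' Y (g')^\dagger) h'$,
\[
T^*_{g',h'}(I_m) = P^{1/2}\, T^*_{g,h}(Q)\, P^{1/2}.
\]
Since $P^{1/2}$ and $Q^{1/2}$ are invertible, the equation $T_{g',h'}(I_n) = Q$ is equivalent to $T_{g,h}(P) = I_m$, and $T^*_{g',h'}(I_m) = P$ is equivalent to $T^*_{g,h}(Q) = I_n$. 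Thus $T_{g,h}$ maps $(P \to I_m, Q \to I_n)$ if and only if $T_{\phi(g,h)}$ maps $(I_n \to Q, I_m \to P)$; bijectivity of $\phi$ yields the desired equivalence.

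There is essentially no obstacle — the only thing to check is that triangularity is preserved, which is automatic because $P^{1/2}$ and $Q^{1/2}$ are diagonal. The same identities show the approximate version as well: if $T_{g,h}$ is an $\epsilon$-$(P \to I_m, Q \to I_n)$-scaling, then the displayed identities force $T_{\phi(g,h)}$ to be a $c\,\epsilon$-$(I_n \to Q, I_m \to P)$-scaling, with $c = \max(\|P^{1/2}\|^2, \|Q^{1/2}\|^2)$, and conversely, so taking $\epsilon \to 0$ recovers approximate scalability in either direction.
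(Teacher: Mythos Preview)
Your proof is correct and is precisely the ``easy change of variables argument'' the paper alludes to: exploiting that $P^{1/2}$ and $Q^{1/2}$ are diagonal (hence in $\GL(F_\bullet)$ and $\GL(E_\bullet)$ respectively) to set up a bijection of scalings that exchanges the two marginal conditions. Your closing remark on the approximate version is also useful, since that is how the proposition is actually applied in Section~\ref{subsec:triangular_algorithm}.
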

The above proposition shows for $P,Q$ nonsingular, it is equivalent to characterize approximate $(P \to I_m, Q \to I_m)$-scalability.
\begin{lemma}\label{lem:reduction_smallergroup} Let $ G = G_{\mathbf{q}}\GL(E_\bullet)$, and $H =  G_{\mathbf{p}}\GL(F_\bullet)$. Then
$\trun_{P,Q} T$ is approximately
 $G \times H$-scalable to $(I_N \to I_N, I_N \to I_N)$ if and only if $T$ is approximately
  $\GL(E_\bullet)\times \GL(F_\bullet)$-scalable to $(P \to I_m, Q \to I_m)$.
  \end{lemma}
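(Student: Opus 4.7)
The plan is to realize every $G\times H$-scaling of $\trun_{P,Q}T$ as the reduction $\trun_{P,Q}$ applied to a $\GL(E_\bullet)\times\GL(F_\bullet)$-scaling of $T$, by means of an intertwining identity
\[
(\trun_{P,Q} T)_{G_{\mathbf{q}}(g),\,G_{\mathbf{p}}(h)} \;=\; \trun_{P,Q}\!\bigl(T_{g,h}\bigr),\qquad g\in\GL(E_\bullet),\ h\in\GL(F_\bullet),
\]
and then observing that the $(I_N\to I_N,\,I_N\to I_N)$ marginal condition on the left pulls back exactly to the $(P\to I_m,\,Q\to I_n)$ condition on $T_{g,h}$.

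First I would establish two ``conjugation-through-$G_\lambda$'' formulas starting from the multiplicativity (\ref{eq:gadget_hom}) of Lemma~\ref{lem:reduction_gadget}. For upper-triangular $h$, taking the adjoint of $G_\lambda(Xh)=G_\lambda(X)G_\lambda(h)$ gives $G_\lambda(h^\dagger Z)=G_\lambda(h)^\dagger G_\lambda(Z)$, and chaining the two produces
\[
G_\lambda(h^\dagger X h) \;=\; G_\lambda(h)^\dagger\,G_\lambda(X)\,G_\lambda(h).
\]
Dualizing in the trace pairing $\langle A,B\rangle=\Tr A^\dagger B$, together with $G_\lambda^{**}=G_\lambda$, yields the companion formula
\[
G_\lambda^*\!\bigl(G_\lambda(h)\,Y\,G_\lambda(h)^\dagger\bigr) \;=\; h\,G_\lambda^*(Y)\,h^\dagger.
\]
The intertwining identity then follows by unwinding the definition $(\trun_{P,Q} T)_{G_{\mathbf{q}}(g),G_{\mathbf{p}}(h)}(X)=G_{\mathbf{q}}(g)^\dagger\,G_{\mathbf{q}}\!\bigl(T(G_{\mathbf{p}}^*(G_{\mathbf{p}}(h) X G_{\mathbf{p}}(h)^\dagger))\bigr)\,G_{\mathbf{q}}(g)$: the companion formula applied with $G_{\mathbf{p}}$ and $h$ eliminates the inner $G_{\mathbf{p}}^*G_{\mathbf{p}}$-conjugation, and the first formula applied with $G_{\mathbf{q}}$ and $g$ absorbs $G_{\mathbf{q}}(g)^\dagger\,\cdot\,G_{\mathbf{q}}(g)$ into a single $G_{\mathbf{q}}$, leaving $G_{\mathbf{q}}(T_{g,h}(G_{\mathbf{p}}^*(X))) = \trun_{P,Q}(T_{g,h})(X)$.

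Next, from $G_{\mathbf{p}}^*(I_N)=P$ and $G_{\mathbf{q}}^*(I_N)=Q$ (Eq.~(\ref{eq:gadget_marg})), I would compute
\[
\trun_{P,Q}(S)(I_N)=G_{\mathbf{q}}\!\bigl(S(P)\bigr), \qquad \trun_{P,Q}(S)^*(I_N)=G_{\mathbf{p}}\!\bigl(S^*(Q)\bigr)
\]
for any appropriately-sized completely positive $S$. Since $G_{\mathbf{q}}$ and $G_{\mathbf{p}}$ are injective linear maps with $G_{\mathbf{q}}(I_m)=I_N$ and $G_{\mathbf{p}}(I_n)=I_N$, their restrictions to their images are bi-Lipschitz, with constants depending only on $\mathbf{p}$ and $\mathbf{q}$. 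Hence $\|G_{\mathbf{q}}(S(P))-I_N\|$ is comparable to $\|S(P)-I_m\|$ (and similarly on the dual side), so the approximate $(I_N\to I_N,\,I_N\to I_N)$ condition for $\trun_{P,Q}(T_{g,h})$ is equivalent to the approximate $(P\to I_m,\,Q\to I_n)$ condition for $T_{g,h}$.

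Combining the intertwining identity with this equivalence of marginal conditions gives the lemma. The main obstacle is establishing the dual identity $G_\lambda^*(G_\lambda(h)Y G_\lambda(h)^\dagger)=hG_\lambda^*(Y)h^\dagger$; everything else is direct bookkeeping with the defining properties in Lemma~\ref{lem:reduction_gadget}.
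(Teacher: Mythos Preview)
Your proof is correct and follows essentially the same route as the paper: both establish the intertwining identity $(\trun_{P,Q} T)_{G_{\mathbf{q}}(g),G_{\mathbf{p}}(h)}=\trun_{P,Q}(T_{g,h})$ from the multiplicativity (\ref{eq:gadget_hom}) and then use the marginal identities $\trun_{P,Q}(S)(I_N)=G_{\mathbf{q}}(S(P))$, $\trun_{P,Q}(S)^*(I_N)=G_{\mathbf{p}}(S^*(Q))$ together with injectivity of $G_{\mathbf{q}},G_{\mathbf{p}}$. The only cosmetic difference is in how ``approximate'' passes through: you invoke the bi-Lipschitz property of an injective linear map explicitly, while the paper phrases the same thing topologically as $\overline{\trun_{P,Q}(T_{\GL(E_\bullet),\GL(F_\bullet)})}=\trun_{P,Q}\overline{T_{\GL(E_\bullet),\GL(F_\bullet)}}$.
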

 
  \begin{proof}
\indent Assuming Lemma \ref{lem:reduction_gadget}, we can immediately see that\\ $\trun_{P,Q}T(I_N) = G_{\mathbf{q}} ( T(P))$, which is equal to $I_N$ if and only if $T(P) = I_m$. Since $(\trun_{P,Q} T)^* = \trun_{Q, P} (T^*)$, by symmetry we have
\begin{align}\trun_{P,Q} T \textrm{ is d. s. } \iff T \textrm{ maps } (P \to I_m, Q \to I_m).\label{eq:reduction_marginals}\end{align}
If $G'$ and $H'$ are groups and $S$ is a completely positive map, let $S_{G',H'} = \{S_{g,h}: g \in G', h \in H'\}.$ Then approximate $G'\times H'$-scalability of $S$ to $(A \to B, C\to D)$ is equivalent to the statement that $\overline{S_{G,H}}$ contains an element that maps $(A \to B, C \to D)$. The closure here can be taken in, say, the operator norm.\\
\indent In the next proposition we'll see that Eq. \ref{eq:gadget_hom} shows that scaling $T$ by $(g,h)$ corresponds to scaling $\trun_{P,Q} T$ by $(G_{\mathbf{q}} (g), G_{\mathbf{p}} (h))$. 
\begin{proposition}\label{prp:reduction_scaling}
Let $(g_0,h_0) \in \GL(E_\bullet)\times \GL(F_\bullet)$. Then 
$$ \trun_{P,Q} T_{g_0, h_0} = (\trun_{P,Q} T)_{G_{\mathbf{q}} (g_0), G_{\mathbf{p}} (h_0)}.$$
\end{proposition}
\begin{proof}%(\trun_{P,Q} T)_{g,h}(X) &= g^\dagger G_\mathbf{q}( T (G_\mathbf{p}^*(h X h^\dagger))g\\
By definition, 
$$(\trun_{P,Q} T_{g_0,h_0})(X)
  = G_\mathbf{q}( g_0^\dagger T ( h_0 G_\mathbf{p}^*(X ) h_0^\dagger) g_0).$$
To complete the proof, apply Eq. \ref{eq:gadget_hom} and the equivalent dual version $(G_\mathbf{p}^*)_{h_0, I_N} = (G_\mathbf{p})_{I_N, h_0}^* = (G_\mathbf{p})^*_{G_\mathbf{p}(h_0), I_l} = (G_\mathbf{p}^*)_{I_l, G_\mathbf{p}(h_0)}.$
\end{proof}
We now finish the proof of Lemma \ref{lem:reduction_smallergroup}. By Proposition \ref{prp:reduction_scaling},
\begin{align*}
\overline{(\trun_{P,Q} T)_{G, H}} = \overline{\trun_{P,Q} T_{\GL(E_\bullet),\GL(F_\bullet)}}.
\end{align*}
$T \mapsto \trun_{P,Q} T$ is an injective, linear map, so 
$$\overline{\trun_{P,Q} T_{\GL(E_\bullet), \GL(F_\bullet)}} = \trun_{P,Q} \overline{T_{\GL(E_\bullet),\GL(F_\bullet)}}.$$
The above chain of equalities and Eq. \ref{eq:reduction_marginals} imply $\overline{(\trun_{P,Q} T)_{G, H}}$ contains a doubly stochastic element if and only if $\overline{T_{\GL(E_\bullet),\GL(F_\bullet)}}$ contains an element that maps $(P \to I_m, Q \to I_m)$, so we are done. %This is equivalent toEquivalently, Lemma \ref{lem:reduction_smallergroup}. 
\end{proof}
%\begin{lemma}\label{lem:scaling_reduction} We may think of $\mathbf{p}$ and $\mathbf{q}$ as partitions of $N$. Let's see
%$$\trun_{P,Q} T = G_{\mathbf{q}} \circ T \circ G_{\mathbf{p}}.$$Then $\trun_{P,Q} T$ is approximately (resp. exactly) $\GL(N) \times \GL(N)$-scalable to $(I_N \to I_N, I_N \to I_N)$ if and only if\\ $T$ is approximately (resp. exactly) $\GL_m(\CC)\times \GL_n(\CC)$-scalable to $(I_n \to Q, I_m \to P)$.\end{lemma}

\begin{proof}[Proof of Item \ref{stoc_reds:item:scaling} of Theorem \ref{thm:stoc_reds}]
Lemma \ref{lem:reduction_smallergroup} and Proposition \ref{prp:in_out} imply that it is enough to show that $\trun_{P,Q} T$ is approximately
 $G \times H$-scalable to $(I_N \to I_N, I_N \to I_N)$ if and only if $\trun_{P,Q} T$ is approximately
 $\GL_N(\CC) \times \GL_N(\CC)$-scalable to $(I_N \to I_N, I_N \to I_N)$. The ``only if" implication is immediate, but the ``if" direction is subtler.\\
 \indent The key is that if $\trun_{P,Q} T$ is approximately
 $\GL_N(\CC) \times \GL_N(\CC)$-scalable to $(I_N \to I_N, I_N \to I_N)$, then by Theorem 4.6 in \cite{Gu04}, operator Sinkhorn iteration converges. That is, if we set $T_{0} = \trun_{P,Q} T$ and for $t \geq 1$ set 
 \begin{align*}T_{2t-1} = (T_{2t - 2})_{g(t),I_N} &\textrm{ and } T_{2t} = (T_{2t-1})_{I_N, h(t)}\\ \textrm{where }g(t)^\dagger T_{2t-2}(I_N) g(t) = I_N& \textrm{ and }h(t)^\dagger T_{2t-1}^*(I_N) h(t) = I_N,\end{align*}
then $T_t(I_N)$ and $T_t^*(I_N)$ converge to $I_N$.\\
 \indent Luckily, we may take $g(t) \in G$ and $h(t) \in H$! We only show this for $g$; the proof is similar for $h$. Inductively, suppose we have taken $g(s) \in G$ and $h(s) \in H$ for $s < t$. Let $g = g(1)\dots g(s)$ and $h = h(1)\dots h(s)$. Eq.\ref{eq:gadget_hom} implies in particular that $G_{\mathbf{p}}: \GL(F_\bullet) \to \GL_N(\CC)$ and $G_{\mathbf{q}}: \GL(E_\bullet) \to \GL_N(\CC)$ are group homomorphisms, and so $g = G_{\mathbf{q}}(g_0)$ and $h = G_{\mathbf{p}}(h_0)$ for some $(g_0,h_0) \in \GL(E_\bullet)\times \GL(F_\bullet)$. By Proposition \ref{prp:reduction_scaling}, 
 $$T_{2t-2} =  (\trun_{P,Q} T)_{g,h} = (\trun_{P,Q} T_{g_0,h_0}).$$
 Choose $g_1 \in \GL(E_\bullet)$ such that $g_1^\dagger T_{g_0,h_0}(P) g_1 =  T_{g_0g_1,h_0}(P) = I_m$ and set $g(t) = G_\mathbf{q}(g_1)$. This is possible by the existence of the Cholesky decomposition and the fact that the scaling procedure converges (in particular, $T_{g_0,h_0}(P) = T_{2t-2}(I_N)$ won't be singular). By the above identity, 
 $$g(t)^\dagger T_{2t-2}(X) g(t)^\dagger = (\trun_{P,Q} T_{g_0g_1,h_0})(P) = I_N.$$
 Thus, if $\trun_{P,Q}T$ is approximately $\GL_N(\CC) \times \GL_N(\CC)$-scalable to $(I_N \to I_N, I_N \to I_N)$, then it is approximately $G\times H$-scalable to $(I_N \to I_N, I_N \to I_N).$
%The injectivity of $G_\mathbf{p}, G_\mathbf{q}$ and the fact that $G_\mathbf{p} = I_N$ and  $\trun_{P,Q} T$ is doubly stochastic if and only if $T(P) = I_m$ and $T(Q) = I_n$. 
%Think of $\mathbf{p}$ and $\mathbf{q}$ as partitions of $N$. 
%We first show that 
\end{proof}
% % %
\subsection{Capacity Under the Reduction}
%We still need to compute the capacity of $\trun_{P,Q} T$. This is a tedious optimization calculation, which we omit from this version of the paper. We show something weaker, which is good enough for our purposes. 
We still need to compute the capacity of $\trun_{P,Q} T$. We first show that the infimum in $\capacity \trun_{P,Q} T$ can be taken over the smaller group $H$, where $H$ is as in \ref{lem:reduction_smallergroup}, without changing the value.
\begin{lemma}\label{lem:capacity_prime}
Define 
$$\capacity' \trun_{P,Q} T = \inf_{h \in H}  \frac{\det (\trun_{P,Q} T) (hh^\dagger)}{\det hh^\dagger}.$$
Then
$$\capacity' \trun_{P,Q} T  = \capacity \trun_{P,Q} T.$$

\end{lemma}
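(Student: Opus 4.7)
The plan is to establish the two inequalities separately. The direction $\capacity \trun_{P,Q} T \leq \capacity' \trun_{P,Q} T$ is immediate, since $\capacity'$ restricts the infimum defining the ordinary capacity to the positive-definite matrices of the form $hh^\dagger$ with $h \in H$. The content is the reverse inequality, which I will establish by showing that for every $X \succ 0$ in $\Mat_{N\times N}(\CC)$ there exists $h \in H$ satisfying (a) $G_{\mathbf{p}}^*(hh^\dagger) = G_{\mathbf{p}}^*(X)$ and (b) $\det hh^\dagger \geq \det X$. Since $\trun_{P,Q}T = G_{\mathbf{q}}\circ T \circ G_{\mathbf{p}}^*$ depends on $X$ only through $G_{\mathbf{p}}^*(X)$, (a) equates the numerators of the ratios appearing in the two capacities while (b) makes the denominator at least as large, so $\frac{\det \trun_{P,Q}T(hh^\dagger)}{\det hh^\dagger} \leq \frac{\det \trun_{P,Q}T(X)}{\det X}$, and taking the infimum on both sides yields $\capacity' \leq \capacity$.

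To construct $h$, set $Y := G_{\mathbf{p}}^*(X)$; since $G_{\mathbf{p}}^*$ is completely positive with $G_{\mathbf{p}}^*(I_N) = P \succ 0$, we have $Y \succ 0$. Because $P$ is positive diagonal, there is a unique upper triangular $h_0 \in \GL(F_\bullet)$ with $h_0 P h_0^\dagger = Y$, and I take $h := G_{\mathbf{p}}(h_0) \in H$. For condition (a), my first step will be to derive the intertwining identity
\[
G_{\mathbf{p}}^*\!\left(G_{\mathbf{p}}(h_0)\, Z\, G_{\mathbf{p}}(h_0)^\dagger\right) \;=\; h_0\, G_{\mathbf{p}}^*(Z)\, h_0^\dagger \qquad (h_0 \text{ upper triangular}),
\]
by observing that Eq.~\ref{eq:gadget_hom} combined with the Kraus-form identity $G_{\mathbf{p}}(X)^\dagger = G_{\mathbf{p}}(X^\dagger)$ yields both right-multiplicativity of $G_{\mathbf{p}}$ by upper triangulars and left-multiplicativity by their adjoints, then transporting these to $G_{\mathbf{p}}^*$ via the trace inner product. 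Setting $Z = I_N$ yields (a).

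For condition (b) the claim is that $X^* := G_{\mathbf{p}}(h_0)\, G_{\mathbf{p}}(h_0)^\dagger$ maximizes $\det X$ on the affine slice $\{X \succ 0 : G_{\mathbf{p}}^*(X) = Y\}$. The slice is bounded, since $\Tr X = \langle X, G_{\mathbf{p}}(I_n)\rangle = \langle G_{\mathbf{p}}^*(X), I_n\rangle = \Tr Y$ is fixed; hence $\log\det$ attains its maximum, and strict concavity of $\log\det$ on the positive-definite cone makes the maximizer unique and characterized by the KKT condition that $X^{-1}$ lies in the image of $G_{\mathbf{p}}$. To verify $X^*$ satisfies this, I apply Eq.~\ref{eq:gadget_hom} to the upper triangular $h_0^{-1}$: one has $(X^*)^{-1} = G_{\mathbf{p}}(h_0)^{-\dagger}\, G_{\mathbf{p}}(h_0)^{-1} = G_{\mathbf{p}}(h_0^{-\dagger} h_0^{-1})$, which indeed lies in the image, so $X^*$ is the unique maximizer. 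The main obstacle is this sup-of-determinant argument; everything else (the reformulation, the intertwining identity, and the final bookkeeping $\det X^* = |\det G_{\mathbf{p}}(h_0)|^2 = \det(P, h_0^\dagger h_0)$ via Eq.~\ref{eq:gadget_determinant} and Lemma~\ref{detpx_facts} that matches $\capacity'$) is routine.
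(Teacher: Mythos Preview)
Your proof is correct and takes a genuinely different route from the paper's. The paper argues indirectly via the Sinkhorn dynamics: assuming $\capacity' \trun_{P,Q}T > 0$, it runs operator Sinkhorn iteration with the scalings chosen in the smaller groups $G$ and $H$ (as justified in the proof of Item~\ref{stoc_reds:item:scaling}), observes that $\capacity$ and $\capacity'$ increase by \emph{exactly the same multiplicative factor} at every step, and then uses that $\capacity T_t \to 1$ as $T_t$ approaches doubly stochastic (quoting \cite{GGOW16}) together with $\capacity' T_t \leq 1$ to conclude that the two capacities must have agreed at time zero.

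Your argument is instead a direct variational one: for each $X \succ 0$ you exhibit an $h \in H$ with $G_{\mathbf p}^*(hh^\dagger) = G_{\mathbf p}^*(X)$ and $\det hh^\dagger \geq \det X$, by recognizing $hh^\dagger = G_{\mathbf p}(h_0)G_{\mathbf p}(h_0)^\dagger$ as the KKT point (hence the unique maximizer, by strict concavity of $\log\det$ under affine constraints) of $\det$ on the fiber $\{X \succ 0 : G_{\mathbf p}^*(X)=Y\}$. The intertwining identity you invoke is exactly the content of Proposition~\ref{prp:reduction_scaling}, and the verification that $(X^*)^{-1} = G_{\mathbf p}(h_0^{-\dagger}h_0^{-1})$ lies in the range of $G_{\mathbf p}$ is correct via Eq.~\ref{eq:gadget_hom}. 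Your approach is self-contained and avoids the dependence on Sinkhorn convergence and on the limit $\capacity T_t \to 1$ from \cite{GGOW16}; the paper's approach, on the other hand, costs almost nothing extra because those ingredients are already in play for the surrounding arguments. One minor point: the upper-triangular $h_0$ with $h_0 P h_0^\dagger = Y$ is unique only up to right multiplication by a diagonal unitary, but this does not affect $hh^\dagger$, so the argument is unaffected.
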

\begin{proof}
We will show $\capacity' \trun_{P,Q} T  = \capacity \trun_{P,Q} T$ using the proof of Theorem \ref{thm:gurvits} in \cite{Gu04}. Clearly $\capacity' \trun_{P,Q} T  \geq \capacity \trun_{P,Q} T$. We need to show that $\capacity' \trun_{P,Q} T \leq \capacity \trun_{P,Q} T$. It is enough to prove this when $\capacity' \trun_{P,Q} T > 0$. Suppose this is true, and let $T_0, T_1, \dots$ be the operators resulting from operator Sinkhorn iteration on $\trun_{P,Q} T$. As shown in the Proof of Item \ref{stoc_reds:item:scaling} of Theorem \ref{thm:stoc_reds}, the scalings can be taken in the smaller groups $G$ and $H$. In \cite{Gu04} it is shown that at every step, $\capacity T_t$ increases by a factor that is at least $f(\epsilon)>1$ if the distance from doubly stochastic is at least $\epsilon$. Critically, the same argument shows $\capacity' T_t$ changes by the exact same factor in each step! Since $\capacity' T_t \leq 1$ for $t \geq 1$, this shows that if $\capacity'\trun_{P,Q} T > 0$, then $T_t(I_N)$, $T^*_t(I_N)$ converge to $I_N$. Since $\capacity T_t$ tends to one as $T_t(I), T_t^*(I)$ tend to the identity (\cite{GGOW16}, Lemma 2.27), $\capacity' T_t$ must also tend to one. Since $\capacity' T_t$ and $\capacity T_t$ changed by the same factor at each step, they must have been equal to begin with.\end{proof}
The remainder of the proof is purely computational.
\begin{proof}[Proof of Item \ref{stoc_reds:item:capacity} of Theorem \ref{thm:stoc_reds}] We just need to show that $\capacity' \trun_{P,Q} T = \capacity(T, P, Q)$. Define $h = G_\mathbf{p} (h_0)$ for $h_0 \in \GL(F_\bullet)$. By Proposition \ref{prp:reduction_scaling}, Eqs. \ref{eq:gadget_marg},  \ref{eq:gadget_hom}, and Eq. \ref{eq:gadget_determinant},
\begin{align*}\capacity' \trun_{P,Q} T 
%&= \inf_{h \in H}  \frac{\det (\trun_{P,Q} T) (hh^\dagger)}{\det hh^\dagger}\\
&=\inf_{h \in H}  \frac{\det (\trun_{P,Q} T)_{I_N, h}(I_N)}{\det h^\dagger h}\\
&=\inf_{h_0 \in \GL(F_\bullet)}  \frac{\det  T_{I_m, h_0}(P)}{\det G_\lambda(h_0 h_0^\dagger)}\\
& = \capacity(T, P, Q).
\end{align*}
\end{proof}
% % %
\subsection{Proof of Lemma \ref{lem:reduction_gadget}}\label{subsec:reduction_gadget}
Here we construct the promised $G_\lambda$ in Lemma \ref{lem:reduction_gadget}. A partition $\lambda$ is often depicted by a Young diagram, a left-justified collection of boxes with $\lambda_i$ boxes in the $i^{th}$ row from the top. The conjugate partition $\lambda'$ of a partition $\lambda$ is the partition obtained by transposing the Young diagram. For example, if $\lambda = (3,1)$, then
 $$ \lambda = \yng(3,1) \textrm{ and } \lambda' = \yng(2,1,1).$$
\begin{definition} Let $\lambda$ be a partition of $l$ with $k$ parts, and $\lambda'$ its conjugate partition. Define $ G_\lambda:\Mat_{k\times k} \CC \to \Mat_{l \times l} \CC$ by 
\begin{align} G_\lambda: X \mapsto \bigoplus_{i = 1}^{\lambda_1} \eta_{\lambda'_i} X\eta_{\lambda'_i}^\dagger,\label{eq:gadget_definition}\end{align}
where again $\eta_j:\CC^k \to \CC^j$ denotes the projection to the first $j$ coordinates.
\end{definition}
\begin{example} If $\lambda = (2,2,1)$, then 
$$G_\lambda \left(\left[ 
\begin{array} {cc|c} a & b & c \\
d & e & f  \\
\hline
g & h & i   \\
\end{array}
 \right]\right) = \left[ 
\begin{array} {ccc|cc} 
a & b & c & 0 & 0 \\
d & e & f & 0 & 0 \\
g & h & i  & 0 & 0 \\
\hline
0 & 0 & 0 & a & b \\
0 & 0 & 0 & d & e \\

\end{array}
 \right]$$
\end{example}
\begin{proof}[Proof of Lemma \ref{lem:reduction_gadget}]
The proofs are quite simple. We first show Eq. \ref{eq:gadget_marg}. It is clear that $G_\lambda(I_k) = I_l$. Next, observe that
$$G_\lambda^*(I_l) = \sum_{i = 1}^{\lambda_1} \eta_{\lambda'_i}^\dagger \eta_{\lambda'_i} = \diag(\lambda).$$
The second equality holds because $ \eta_{\lambda'_i}^\dagger \eta_{\lambda'_i}$ is a matrix with $\lambda'_i$ ones descending down the diagonal - the ones along the diagonal correspond to boxes in the columns of the Young diagram.\\
\indent Eq. \ref{eq:gadget_hom} follows by applying the following handy fact about upper triangular matrices to each direct summand in \ref{eq:gadget_definition}.
 \begin{proposition}\label{prp:upper_triangular} Let $h$ be upper triangular. Then 
 $$h \eta_j^\dagger = \eta_j^\dagger \eta_j h \eta_j^\dagger.$$
 \end{proposition}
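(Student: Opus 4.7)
The plan is to verify the identity by a direct block-matrix comparison, using only the definition of $\eta_j$ and the upper-triangularity of $h$. First I would recall that $\eta_j \colon \CC^k \to \CC^j$ is represented by the $j \times k$ matrix $[\,I_j \mid 0\,]$, so $\eta_j^\dagger$ is the $k \times j$ matrix $\bigl(\!\begin{smallmatrix} I_j \\ 0 \end{smallmatrix}\!\bigr)$ and $\eta_j^\dagger \eta_j$ is the $k \times k$ diagonal projection onto the first $j$ coordinates.

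With these identifications the left-hand side $h \eta_j^\dagger$ is simply the $k \times j$ matrix formed by the first $j$ columns of $h$. The right-hand side $\eta_j^\dagger \eta_j h \eta_j^\dagger$ selects the same first $j$ columns of $h$ and then, by left-multiplication by $\eta_j^\dagger \eta_j$, zeros out all entries in rows $j+1, \dots, k$. Hence the identity reduces to the assertion that the first $j$ columns of $h$ already vanish in rows $j+1,\dots,k$. But upper-triangularity of $h$ says that $h_{i,l}=0$ whenever $i>l$, and for any column index $l\le j$ and row index $i\ge j+1$ we have $i > j \ge l$, giving $h_{i,l}=0$. This is exactly what is needed.

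Equivalently, writing $h$ in $2 \times 2$ block form with blocks of sizes $j$ and $k-j$,
$$h = \begin{pmatrix} h_{11} & h_{12} \\ 0 & h_{22} \end{pmatrix},$$
the upper-triangularity of $h$ forces the lower-left block to be zero; then
$$h \eta_j^\dagger = \begin{pmatrix} h_{11} \\ 0 \end{pmatrix}, \qquad \eta_j h \eta_j^\dagger = h_{11}, \qquad \eta_j^\dagger \eta_j h \eta_j^\dagger = \begin{pmatrix} h_{11} \\ 0 \end{pmatrix},$$
and the identity is immediate.

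There really is no obstacle here: the statement is a one-line consequence of the block structure. The only thing to be careful about is the order of the factors and the sizes of the various projections; once one fixes the convention that $\eta_j = [\,I_j\mid 0\,]$ the computation is essentially forced. I would write the final proof as one short paragraph, preferring the block-matrix form above since it makes the role of upper-triangularity (vanishing of the lower-left block) visually transparent.
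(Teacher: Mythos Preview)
Your proof is correct and is essentially the same as the paper's: the paper phrases it as ``$\eta_j^\dagger$ embeds $\CC^j$ into $E_j$, $h$ preserves $E_j$, and $\eta_j^\dagger\eta_j$ is the identity on $E_j$,'' which is exactly your block-matrix computation restated in invariant-subspace language.
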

 \begin{proof} One can draw a picture. Alternatively, $\eta_j^\dagger$ is an embedding of $\CC^j$ into the span $E_j$ of the first $j$ coordinate vectors; $h$ fixes $E_j$. The map $\eta_j^\dagger \eta_j$ is a projection to $E_j$, so acts as the identity on $E_j$.
 \end{proof}
 Finally, Eq. \ref{eq:gadget_determinant} is just counting - the number of times $\eta_j X \eta_j^\dagger$ appears as a direct summand in \ref{eq:gadget_definition} is $\Delta \lambda_i = \lambda_i - \lambda_{i+1}$. 
\end{proof}
%It's not hard to find the Kraus operators of $\trun_{P,Q} T$.
\begin{proposition}[Kraus operators of $\trun_{P,Q} T$.]\label{prp:reduction_kraus}
If $T$ had Kraus operators $(A_i:i \in [r])$, then in an appropriate basis $\trun_{P,Q} T$ will have Kraus operators indexed by $i \in [r]$, $j \in [q_1]$ and $k \in [p_1]$ given by 
\begin{align*}
\left[\begin{array}{ccccc} 0_{q'_1, p'_1} & \hdots &  0_{q'_1, p'_k} & \hdots& 0_{q'_1, p'_{p_1}} \\
\vdots &\ddots&\vdots &\ddots&\vdots\\
0_{q'_j, p'_1}  & \hdots&  \eta_{q'_j} A_i \eta_{p'_k}^\dagger &\hdots &0_{q'_j, p'_{p_1}} \\
 \vdots &\ddots&\vdots &\ddots&\vdots\\
0_{q'_{q_1}, p'_1}  & \hdots & 0_{q'_{q_1}, p'_k} & \hdots & 0_{q'_{q_1}, p'_{p_1}}
\end{array}\right]. %\left(e_i e_k^\dagger \otimes  \eta_{q'_i} A_j \eta_{p'_k}^\dagger: i \in [q_1], j \in [r], k \in [p_1]\right).
\end{align*}
That is, they are block matrices with $\eta_{q'_j} A_i \eta_{p'_k}^\dagger$ in the $j,k$ spot and zeroes elsewhere. 
\end{proposition}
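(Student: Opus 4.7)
The plan is to read off Kraus operators for each factor in the composition $\trun_{P,Q} T = G_{\mathbf{q}} \circ T \circ G_{\mathbf{p}}^*$ and then multiply them together, since the Kraus operators of a composition of completely positive maps are simply the pairwise products of Kraus operators of the factors.

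First I would unpack the defining formula $G_\lambda(X) = \bigoplus_{j=1}^{\lambda_1} \eta_{\lambda'_j} X \eta_{\lambda'_j}^\dagger$ in Eq.~(\ref{eq:gadget_definition}). Writing the direct sum as a sum of maps into orthogonal blocks, we get
\[
G_\lambda(X) = \sum_{j=1}^{\lambda_1} (\iota_j \eta_{\lambda'_j}) \, X \, (\iota_j \eta_{\lambda'_j})^\dagger,
\]
where $\iota_j:\CC^{\lambda'_j} \hookrightarrow \CC^l$ is the inclusion of the $j$-th direct summand. Thus the Kraus operators of $G_\lambda$ are $C_j^{(\lambda)} := \iota_j \eta_{\lambda'_j}$, and the Kraus operators of its dual $G_\lambda^*$ are the adjoints $(C_j^{(\lambda)})^\dagger = \eta_{\lambda'_j}^\dagger \iota_j^\dagger$.

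Next I would apply the composition rule: if $S_1, S_2$ are completely positive with Kraus operators $(B_\alpha)$ and $(A_\beta)$ respectively, then $S_1 \circ S_2$ has Kraus operators $(B_\alpha A_\beta)$, indices running independently. Applied to $\trun_{P,Q} T = G_{\mathbf{q}} \circ T \circ G_{\mathbf{p}}^*$, the Kraus operators are
\[
C_j^{(\mathbf{q})} \, A_i \, (C_k^{(\mathbf{p})})^\dagger \;=\; \iota_j^{(\mathbf{q})} \, \eta_{q'_j} A_i \eta_{p'_k}^\dagger \, (\iota_k^{(\mathbf{p})})^\dagger,
\]
indexed by $(i,j,k) \in [r] \times [q_1] \times [p_1]$.

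Finally I would identify this with the block matrix in the statement. Using the orthogonal decompositions $\CC^N = \bigoplus_{k=1}^{p_1} \CC^{p'_k}$ on the domain and $\CC^N = \bigoplus_{j=1}^{q_1} \CC^{q'_j}$ on the codomain, composition with $(\iota_k^{(\mathbf{p})})^\dagger$ reads off the $k$-th input block and composition with $\iota_j^{(\mathbf{q})}$ embeds into the $j$-th output block. Hence $\iota_j^{(\mathbf{q})} M (\iota_k^{(\mathbf{p})})^\dagger$, for $M : \CC^{p'_k} \to \CC^{q'_j}$, is exactly the block matrix with $M$ in position $(j,k)$ and zeros elsewhere; specializing $M = \eta_{q'_j} A_i \eta_{p'_k}^\dagger$ gives the desired form. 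No step is really an obstacle; the only thing requiring attention is the bookkeeping of the two direct-sum bases, which is precisely the content of the phrase ``in an appropriate basis'' in the statement.
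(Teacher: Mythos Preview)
Your proposal is correct. The paper states this proposition without proof, treating it as immediate from the definition $\trun_{P,Q} T = G_{\mathbf{q}} \circ T \circ G_{\mathbf{p}}^*$ and the explicit form of $G_\lambda$ in Eq.~(\ref{eq:gadget_definition}); your argument is precisely the unpacking one would do to verify it, and the bookkeeping with the inclusions $\iota_j$ is exactly right.
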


%There, the Kraus operators were augmented and duplicated so that the new operator $T'$ satisfied $T'(I) = I$ if and only if $T(P) = I$. 
%In our case, we will only wish to duplicate ``part" of a Kraus operator - this will result in our use of the projections $\eta_i, \nu_j$. 

%%%
%%%
%%%
\section{Triangular Scalings}\label{sec:triangular_scalings}
%%%
%%%
%%%
This section contains the proof of Theorem \ref{thm:apx_delta_scalable}, as well as some bounds on the running time of our algorithms.

\subsection{Upper Triangular Scaling Algorithm}\label{subsec:triangular_algorithm}
Here we show the implication $\ref{delt_cap} \implies \ref{delt_scal}$ of Theorem \ref{thm:apx_delta_scalable}. By Proposition \ref{prp:in_out}, it is enough to show $\capacity(T, P, Q) > 0$ implies approximate $(P \to I_m, Q \to I_n)$-scalability. The ideas here follow the ideas in \cite{Gu04} very closely. We use Jensen's inequality to show capacity increases by a function of $\epsilon$ in each step of Algorithm \ref{alg:alg_g_tri} unless $T$ is already an $\epsilon$-$(P \to I_m, Q \to I_n)$-scaling, and we show it is always bounded by one. Thus, if the capacity is nonzero to begin with, Sinkhorn scaling eventually results in an $\epsilon$-$(P \to I_m, Q \to I_n)$-scaling. 

\subsubsection*{Reducing to the nonsingular case}
 It is much more convenient to work with nonsingular $P$ and $Q$ analyze Algorithm \ref{alg:alg_g_tri}. If $P$ and $Q$ are singular, we can simply project the operator to $\supp P$ and $\supp Q$ - namely, the span of the positive eigenspaces.
\begin{definition} Define $\eta := \eta_{\rank P}$; $\eta$ is a partial isometry such that $\eta^\dagger \eta$ is an orthogonal projection to $\supp P$. Let $\nu$ denote the analogous isometry for $Q$. Define 
$$\underline{T}: \Mat_{\rank P \times \rank P} \CC \to \Mat_{\rank Q \times \rank Q} \CC$$
by 
$$\underline{T}: X \mapsto \nu T(\eta^\dagger X \eta )\nu^\dagger,$$
and define $\underline{P} := \eta P \eta^\dagger \succ 0$ and $\underline{Q} = \nu Q \nu^\dagger \succ 0$. Define $\underline{G} = \nu \GL(E_\bullet) \nu^\dagger $ and $\underline{H} = \eta \GL(F_\bullet) \eta^\dagger;$ these groups are actually the upper triangular invertible matrices in $\GL_{\rank Q}(\CC)$, $\GL_{\rank P}(\CC)$, respectively.
\end{definition}
Here we reduce to the nonsingular case - Lemma \ref{inv_red_lite} below implies it is enough to prove $\ref{delt_cap} \implies \ref{delt_scal}$ of Theorem \ref{thm:apx_delta_scalable} when $P$ and $Q$ are nonsingular.
\begin{lemma}\label{inv_red_lite}$ $
\begin{enumerate}
\item\label{cap_lower} $\capacity (\underline{T}, \underline{P}, \underline{Q})  \geq \capacity(T, P, Q)$.
%\item\label{rnd_equiv} $\underline{T}$ is $(\underline{P}, \underline{Q})$-rank-nondecreasing if and only if $T$ is $(P, Q)$-rank-nondecreasing.
\item \label{scal_lifts}If $T$ has an $\epsilon$-$(I_{\rank P} \to \underline{Q}, I_{\rank Q} \to \underline{P})$-scaling by $\underline{G}\times\underline{H}$, then $T$ has a\\ $2\epsilon$-$(I_m \to Q, I_n \to P)$-scaling by  $\GL(E_\bullet) \times \GL(F_\bullet)$.
%\item $\capacity(T, P, Q) > 0$
%\item $T$ is $(P, Q)$-rank-nondecreasing.
\end{enumerate}
\end{lemma}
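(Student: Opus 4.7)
The plan is to address the two parts separately.

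For Part 1, I would exploit that $P$ and $Q$ are diagonal and vanish outside their supports ($\langle e_1,\ldots,e_k\rangle$ with $k := \rank P$, and $\langle f_1,\ldots,f_\ell\rangle$ with $\ell := \rank Q$). Writing any $h \in \GL(F_\bullet)$ in block form
\[
h = \begin{pmatrix} h_{11} & h_{12} \\ 0 & h_{22} \end{pmatrix}
\]
according to this decomposition, the bottom-left block vanishes by upper-triangularity, and $h_{11} = \eta h \eta^\dagger \in \underline{H}$ is invertible. Using $P = \eta^\dagger \underline{P} \eta$ and the defining relation $\underline T(Y) = \nu T(\eta^\dagger Y \eta)\nu^\dagger$, one checks that the top-left $\ell \times \ell$ block of $T(hPh^\dagger)$ equals $\underline T(h_{11}\underline P h_{11}^\dagger)$. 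Because $\Delta q_i = 0$ for $i > \ell$ and $\Delta p_j = 0$ for $j > k$, the relative determinants appearing in the capacity see only these top-left blocks, yielding
\[
\frac{\det(Q, T(hPh^\dagger))}{\det(P, h^\dagger h)} \;=\; \frac{\det(\underline Q, \underline T(h_{11}\underline P h_{11}^\dagger))}{\det(\underline P, h_{11}^\dagger h_{11})}.
\]
As $h$ varies over $\GL(F_\bullet)$ its top block $h_{11}$ varies over $\underline H$, so in fact $\capacity(T,P,Q) = \capacity(\underline T, \underline P, \underline Q)$, strictly stronger than the claimed inequality.

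For Part 2, given an $\epsilon$-$(I_k \to \underline Q, I_\ell \to \underline P)$ scaling $(\underline g, \underline h) \in \underline G \times \underline H$ of $\underline T$, I would lift by padding with a small identity block:
\[
g_\delta = \begin{pmatrix} \underline g & 0 \\ 0 & \delta I_{m-\ell}\end{pmatrix} \in \GL(E_\bullet), \qquad h_\delta = \begin{pmatrix} \underline h & 0 \\ 0 & \delta I_{n-k}\end{pmatrix} \in \GL(F_\bullet),
\]
for a small $\delta > 0$ to be chosen at the end. Using $h_\delta h_\delta^\dagger = \eta^\dagger \underline h \underline h^\dagger \eta + \delta^2 (I_n - \eta^\dagger \eta)$, a direct block calculation shows that the top-left $\ell \times \ell$ block of $T_{g_\delta,h_\delta}(I_n) = g_\delta^\dagger T(h_\delta h_\delta^\dagger) g_\delta$ is exactly $\underline g^\dagger \underline T(\underline h \underline h^\dagger)\underline g = \underline T_{\underline g,\underline h}(I_k)$, which by hypothesis lies within $\epsilon$ of $\underline Q$. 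The remaining blocks of $g_\delta^\dagger T(\eta^\dagger \underline h \underline h^\dagger \eta) g_\delta$ carry a factor of $\delta$ or $\delta^2$, and the residual $\delta^2 g_\delta^\dagger T(I_n - \eta^\dagger \eta) g_\delta$ has norm $O(\delta^2)$. Since $Q = \nu^\dagger \underline Q \nu$ is supported on its top-left $\ell \times \ell$ block, I obtain $\|T_{g_\delta,h_\delta}(I_n) - Q\| \leq \epsilon + C \delta$ for a constant $C = C(\underline g, \underline h, T)$. The dual estimate $\|T_{g_\delta,h_\delta}^*(I_m) - P\| \leq \epsilon + C'\delta$ follows symmetrically, using $g_\delta g_\delta^\dagger = \nu^\dagger \underline g \underline g^\dagger \nu + \delta^2(I_m - \nu^\dagger \nu)$ and $\underline T^*(Y) = \eta T^*(\nu^\dagger Y \nu)\eta^\dagger$. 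Choosing $\delta$ small enough that $C\delta, C'\delta \leq \epsilon$ delivers the desired $2\epsilon$-scaling.

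The only real challenge is careful bookkeeping of the block decompositions, complicated by the asymmetry between input and output dimensions ($n$ versus $m$, reducing to $k$ versus $\ell$); I do not anticipate any deeper obstacle.
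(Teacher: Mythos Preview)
Your proposal is correct and follows essentially the same approach as the paper. For Part~1 the paper likewise reduces both numerator and denominator to the top-left blocks (via $\det(P,A)=\det(\underline P,\eta A\eta^\dagger)$ and Proposition~\ref{prp:upper_triangular}) and, like you, actually obtains equality of the two capacities; for Part~2 the paper's ``let $h$ approach $\eta^\dagger\underline h\,\eta$ and $g$ approach $\nu^\dagger\underline g\,\nu$'' is exactly your $\delta$-padding in the limit $\delta\to 0$.
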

The proof is quite simple and can be found in Appendix \ref{app:triangular_scalings}.

%%%%%
\subsubsection*{Distance to doubly stochastic}
%%%%%
\indent We will need another notion of how far $T$ is from being a $(P \to I_m, Q \to I_n)$-scaling. The quantity $\ds_{P,Q} T$ is a natural distance measure because it agrees with $\ds \trun_{P,Q} T$ when $P$ and $Q$ have integral spectra.
\begin{definition}\label{dfn:ds_def}
Let $\ds_{P,Q} T = $
%By Definition \ref{ds_def}, %If $j$ is odd, then $T_j(P) = I$, so 
\begin{align*}
%= \sum_{i \in \sigma(F_\circ(P))} a_i \Tr \left(\eta_i ( T^* ( Q ) - I) \eta_i^\dagger \right)^2 + \sum_{i \in \sigma(F_\circ(Q))}b_i \Tr \left( \nu_i (  T( P )  - I) \nu_i^\dagger\right)^2 \\
 \sum_{i =1}^n \Delta p_i \left\|\eta_i ( T^* ( Q ) - I_n) \eta_i^\dagger \right\|^2 + \sum_{j =1}^m \Delta q_j \left\| \eta_j (  T( P )  - I_m) \eta_j^\dagger \right\|^2.
%=\sum_{i \in \sigma(F_\circ(P))}  a_i \Tr \left(\eta_i h_{j-1}^\dagger \eta_i^\dagger \eta_i T^* ( g_jQ g_j^\dagger)\eta_i^\dagger \eta_ih_{j-1} \eta_j^\dagger - I_{F_i(P)} \right)^2
%=\sum_{i = 1}^n a_i \Tr \left(\eta_j ( g_j^\dagger T^* (h_{j-1}Qh_{j-1}^\dagger) g_j - I) \eta_j^\dagger \right) g_j^\dagger ( g_j^\dagger)^{-1}\left(\eta_j ( g_j^\dagger T^* (h_{j-1}Qh_{j-1}^\dagger) g_j - I) .\eta_j^\dagger \right)
\end{align*}
In particular, $\ds_{P,Q}T \geq p_n \|T^*(Q) - I_m\|^2 + q_m \|T(P) - I_n\|^2$, so if $P$ and $Q$ are invertible and $\ds_{P,Q} T_{g,h} < \epsilon^2 \min \{p_n, q_m\}$ then $T_{g,h}$ is an $\epsilon$-$(P \to I_m, Q \to I_n)$-scaling of $T$.
\end{definition}

\begin{Algorithm}[th!]
\begin{flushleft}
\textbf{Input:} $T$, $P, Q$ as in Section \ref{subsec:prelim}. In addition $P$ and $Q$ are nonsingular.\\
 \vspace{.25cm}
\textbf{Output:} A pair $(g,h) \in \GL(E_\bullet) \times \GL(F_\bullet)$ such that $T_{g,h}$ is an $\epsilon$-$(P \to I_m, Q \to I_n)$ scaling of $T$, or ERROR.\\
% If $T$ is approximately $\GL_m(\CC) \times \GL_n(\CC)$-scalable to $(I_n \to Q, I_m \to P)$, then $\Pr[ERROR < 1/3]$.\\
 \vspace{.25cm}
\textbf{Algorithm:}
\begin{enumerate}
\item 
%Let $0<\gamma \leq 2^{2b}$ an integer such that $\gamma P$ and $\gamma Q$ have integral spectra. Choose each entry of $(g_0, h_0) \in \Mat_{m\times m}(\CC)\times \Mat_{n \times n}(\CC)$ independently and uniformly at random from $[3\max\{2\gamma^2, n,m\}]$. 
Set $g_0 = I_m$, $h_0 = I_n$.
\item For $j \in [TIME]$:
\begin{enumerate}
\item 
\begin{description}
\item[If $j$ is odd:] Find $g\in \GL({E_\bullet})$ such that $ g^\dagger T (h_{j-1}Ph_{j-1}^\dagger)g  = I$. Set $g_j = g$ and $h_j = h_{j-1}$.
\item[If $j$ is even:] Find $h \in \GL({F_\bullet})$ such that $ h^\dagger T^* (g_{j-1}Qg_{j-1}^\dagger)h  = I$. Set $g_{j} = g_{j-1}$ and  $h_j = h$.
\end{description}
If this was not possible, \textbf{return} ERROR.
\item If $ \ds T_{g_j,h_j} \leq \epsilon$,
 \textbf{return} $(g_j, h_j)$. 
\end{enumerate}
\item \textbf{Return} ERROR.
\end{enumerate}
\caption{Upper triangular scaling algorithm.}\label{alg:alg_g_tri}
\end{flushleft}
\end{Algorithm}
\begin{definition}
It will be convenient to define $T_{j} := T_{g_j, h_j}$ where $g_j, h_j$ are as in Algorithm \ref{alg:alg_g_tri}.
\end{definition}
\begin{theorem}\label{thm:alg_tri_term_stoc}
 If $\capacity(T, P, Q) > 0,$ and
$$TIME = \frac{ - 7\log \capacity(T_1, P, Q)}{\min\{\epsilon, p_n\} + \min\{\epsilon, q_m\}}, $$
then Algorithm \ref{alg:alg_g_tri} does not output ERROR.
\end{theorem}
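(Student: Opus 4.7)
My plan follows the template Gurvits developed for Sinkhorn iteration in \cite{Gu04}: track $\Phi_j := -\log \capacity(T_j, P, Q)$, show it is non-negative and monotonically non-increasing, with a quantitative per-step decrease whenever the termination test fails, and combine this with the upper bound $\Phi_j \geq 0$ to cap the iteration count. By Lemma~\ref{inv_red_lite} I will assume $P, Q \succ 0$ throughout; positivity of capacity, preserved inductively along the iteration, guarantees that the Cholesky-style step at each iteration is feasible.

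The central computation is a closed-form multiplicative update of capacity. On an odd step $j$, $T_j = (T_{j-1})_{a_j, I_n}$ where $a_j \in \GL(E_\bullet)$ is the unique upper-triangular matrix with positive diagonal satisfying $a_j^\dagger T_{j-1}(P) a_j = I_m$. A change of variable in the infimum defining $\capacity$, combined with Lemma~\ref{detpx_facts}, yields
\[\capacity(T_j, P, Q) = \det(Q, a_j^\dagger a_j) \cdot \capacity(T_{j-1}, P, Q).\]
Using Proposition~\ref{prp:upper_triangular} I get $\det(Q, a_j^\dagger a_j) = \prod_k |(a_j)_{kk}|^{2q_k}$; expressing $|(a_j)_{kk}|^2$ as a ratio of consecutive principal minors of $T_{j-1}(P)$ (from $a_j a_j^\dagger = T_{j-1}(P)^{-1}$ via the Schur complement formula) and summing by parts yields
\[\log \det(Q, a_j^\dagger a_j) = -\sum_{l=1}^m \Delta q_l \log \det \eta_l T_{j-1}(P) \eta_l^\dagger,\]
with the analogous identity on even steps, $P$ and $Q$ swapped. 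Setting $h = I_n$ in the infimum and using $T_j(P) = I_m$ gives $\capacity(T_j, P, Q) \leq 1$, so $\Phi_j \geq 0$ for $j \geq 1$.

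The heart of the argument is the per-step progress bound: whenever the termination test fails at step $j-1$, an odd (resp.\ even) step $j$ decreases $\Phi$ by at least $c\min\{\epsilon, q_m\}$ (resp.\ $c\min\{\epsilon, p_n\}$). For this I apply the one-variable convex inequality $-\log \mu \geq (1-\mu) + c\min\{(\mu-1)^2, |\mu-1|\}$ eigenvalue-wise to each principal submatrix $\eta_l T_{j-1}(P) \eta_l^\dagger$. The linear-in-$(\mu-1)$ terms, after weighting by $\Delta q_l$ and Abel summation, collapse to $\Tr Q - \Tr(T_{j-1}(P) Q)$, which vanishes for $j \geq 2$: the previous step enforces $T_{j-1}^*(Q) = I_n$ (on odd $j$) or $T_{j-1}(P) = I_m$ (on even $j$), so $\Tr(T_{j-1}(P) Q) = \Tr P$, and $\Tr P = \Tr Q$ is a necessary condition for the scaling problem to be feasible. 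What remains is a weighted sum $\sum_l \Delta q_l \sum_i \min\{(\mu_{l,i}-1)^2, |\mu_{l,i}-1|\}$, which is at least $c\min\{\ds_{P,Q}T_{j-1}, q_m\}$; by nontermination $\ds_{P,Q}T_{j-1} > \epsilon$, giving the claim. Summing over $TIME$ iterations then produces $\Phi_{TIME} < 0$, a contradiction.

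The main obstacle is the progress inequality: when $T_{j-1}(P)$ is far from the identity the linear branch of the min must be used, and this is exactly what produces the $\min\{\epsilon, q_m\}$ (rather than the naive $\epsilon^2$) in the $TIME$ bound. Once the ``$\min$'' inequality is in hand and the trace correction is confirmed to vanish from the necessary condition $\Tr P = \Tr Q$, all remaining steps are direct computations with Lemma~\ref{detpx_facts} and Proposition~\ref{prp:upper_triangular}.
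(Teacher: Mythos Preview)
Your proposal is correct and follows essentially the same approach as the paper: track $-\log\capacity(T_j,P,Q)$, show it is nonnegative after the first normalization (the paper's Lemma~\ref{lem:cap_ub}), update it multiplicatively via the change-of-variables identity (Lemma~\ref{lem:cap_evol}), and extract per-step progress from the scalar inequality $-\log\mu \ge (1-\mu) + 0.3\min\{(\mu-1)^2,|\mu-1|\}$ applied eigenvalue-wise (Lemma~\ref{lem:cap_incr}). Two small points you leave implicit but should spell out: (i) the trace cancellation uses $\Tr P = \Tr Q$, which you should deduce directly from $\capacity(T,P,Q)>0$ (plug $h=cI_n$ into the infimum) rather than from ``feasibility'' to avoid circularity; (ii) the step ``$\sum_l \Delta q_l \sum_i \min\{(\mu_{l,i}-1)^2,|\mu_{l,i}-1|\} \ge c\,q_m$ when some $\mu_{l,i}>2$'' needs Cauchy interlacing to push the large eigenvalue down to the full $m\times m$ block, where it picks up weight $\Delta q_m = q_m$.
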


Here are the results needed to prove Theorem \ref{thm:alg_tri_term_stoc} as per the plan outlined at the beginning of Section \ref{subsec:triangular_algorithm}; we hint here at the proofs and the full versions can be found in Appendix \ref{app:triangular_scalings}. 
\begin{lemma}[nonsingularity]\label{lem:dual_inv}
Suppose $P$ and $Q$ are invertible and $\capacity(T, P, Q) > 0$. Then $T$ and $T^*$ both map positive-definite operators to positive-definite operators.
\end{lemma}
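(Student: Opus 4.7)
The plan is to prove the contrapositive for each of $T$ and $T^*$: if either map sends some positive-definite matrix to a singular one, then $\capacity(T, P, Q) = 0$. The starting observation for both cases is the standard ``joint kernel'' fact: since $T(Y) = \sum_i A_i Y A_i^\dagger$ with $Y \succ 0$, the equation $T(Y) v = 0$ forces $v^\dagger T(Y) v = \sum_i (A_i^\dagger v)^\dagger Y (A_i^\dagger v) = 0$ and hence $A_i^\dagger v = 0$ for every $i$; symmetrically, $T^*(Y) w = 0$ with $Y \succ 0$ forces $A_i w = 0$ for all $i$. The key takeaway is that the common kernels $V := \bigcap_i \ker A_i^\dagger$ and $W := \bigcap_i \ker A_i$ are \emph{uniform}---they do not depend on the input $Y$, so if one of them is nontrivial, the corresponding map is singular on every positive-definite input.

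For $T$, the capacity argument is nearly immediate. If $V$ is nontrivial, then $T(hPh^\dagger)$ is singular for every upper-triangular $h$, so $\det T(hPh^\dagger) = 0$. The numerator $\det(Q, T(hPh^\dagger))$ is a product of determinants of principal submatrices raised to the nonnegative powers $\Delta q_j$; the $j=m$ factor is $(\det T(hPh^\dagger))^{q_m}$, and since $Q$ is invertible, $q_m > 0$. This kills the whole product while the denominator $\det(P, h^\dagger h)$ remains positive, forcing $\capacity(T, P, Q) = 0$.

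The $T^*$ case is more delicate because the capacity is written entirely in terms of $T$. Given nonzero $w \in W$, let $j^* = \max\{j : w_j \neq 0\}$, so $w \in F_{j^*}$. The plan is to exhibit an explicit degenerating family $h_t \in \GL(F_\bullet)$: take $h_t$ to be the upper-triangular matrix whose $j^*$-th column is $tw$ and whose other columns are the standard basis vectors. Then $h_t P h_t^\dagger = \sum_{j \neq j^*} p_j e_j e_j^\dagger + t^2 p_{j^*} w w^\dagger$, and because $w$ lies in the joint kernel of the $A_i$, we have $T(w w^\dagger) = 0$, so $T(h_t P h_t^\dagger)$ reduces to a constant $C$ independent of $t$. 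On the other hand, the character formula (Eq.~\ref{eq:character}) gives $\det(P, h_t^\dagger h_t) = \prod_i |(h_t)_{ii}|^{2 p_i} = |t w_{j^*}|^{2 p_{j^*}}$, which blows up as $t \to \infty$ since $p_{j^*} \geq p_n > 0$ by invertibility of $P$. The capacity ratio therefore tends to $0$, again forcing $\capacity(T, P, Q) = 0$.

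The main obstacle is locating the right degenerating family in the $T^*$ half: the capacity definition restricts $h$ to the upper-triangular group, so we cannot simply change basis to put $w$ along $e_n$. We have to use the upper-triangular structure of $F_\bullet$ together with the fact that $w$ already lies in $F_{j^*}$ to build an $h_t$ preserving the flag whose action concentrates into a single column. Once this family is in hand, the rest reduces to the character formula from Lemma~\ref{detpx_facts} and the observation that $T$ annihilates $ww^\dagger$ on the joint kernel.
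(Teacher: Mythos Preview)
Your argument is correct. The $T$ half is essentially the paper's observation phrased contrapositively; the paper instead first rewrites the capacity via the substitution $\tilde h = hP^{1/2}$ and then notes that any $X\succ 0$ is some $hh^\dagger$, but your direct use of the joint kernel $V=\bigcap_i\ker A_i^\dagger$ reaches the same conclusion without that change of variables.

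For $T^*$ the overall strategy (build a degenerating family $h_t\in\GL(F_\bullet)$ that inflates the denominator while freezing the numerator) matches the paper, but the constructions differ. The paper takes $h_c$ to be an upper-triangular Cholesky factor of $c\,\pi_R+\pi_{R^\perp}$ for the \emph{entire} joint kernel $R=\bigcap_i\ker A_i$, then bounds $\det(P,h_c^\dagger h_c)\ge c^{\,p_n\dim R}$ via eigenvalue interlacing of principal submatrices. You instead pick a single $w\in R$, replace only the $j^*$-th column of the identity by $tw$ (where $j^*$ is the last nonzero coordinate of $w$), and compute $\det(P,h_t^\dagger h_t)=|tw_{j^*}|^{2p_{j^*}}$ exactly from the character formula~\eqref{eq:character}. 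Your route avoids Cholesky and interlacing and gives an explicit family; the paper's route is basis-independent and exhibits the full kernel dimension in the growth exponent. Either way the ratio is a fixed numerator over a quantity blowing up like a positive power of $t$ (since $p_{j^*}\ge p_n>0$), so both force $\capacity(T,P,Q)=0$.
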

 The proof of Lemma \ref{lem:cap_evol} is an easy change-of-variables argument.\\

\begin{lemma}[capacity update]\label{lem:cap_evol}
%Suppose $P = \sum_{j = 1}^n a_j\eta_j^\dagger\eta_j$ and $Q =\sum_{i= 1}^m b_i\nu_i^\dagger\nu_i.$ 
If $h \in  \GL(F_\bullet)$ and $g \in \GL(E_\bullet)$, 
%\textcolor{blue}{remember to define $G_{V^\bullet}$ for non-integral spectra} and $h^\dagger T^*(Q) h = I$, 
then 
$$\capacity(T_{g,h}, P,Q) =   \det(Q, g^\dagger g) \det(P, h^\dagger h) \capacity(T, P,Q).$$
%\prod_{i=1}^m \left(\det \nu_i g^\dagger  g \nu_i^\dagger\right)^{b_i}\prod_{j = 1}^n (\det  \eta_j h^\dagger  h \eta_j^\dagger)^{a_j} \capacity_{p,q,E_\bullet, F_\bullet} T.$$
\end{lemma}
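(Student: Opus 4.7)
The plan is a direct calculation: expand $\capacity(T_{g,h},P,Q)$ from Definition \ref{def:capacity}, push $g$ and $h$ outside the infimum using Lemma \ref{detpx_facts}, then perform a change of variables in the infimum.

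First, I unfold the numerator. By definition, $T_{g,h}(h'Ph'^\dagger) = g^\dagger T((hh')P(hh')^\dagger) g$ for any $h' \in \GL(F_\bullet)$. Applying Eq. \ref{twoside_mult} with outer matrix $g$ (upper triangular because $g\in\GL(E_\bullet)$) and inner argument $T((hh')P(hh')^\dagger)$, I get
\[
\det\!\bigl(Q,\,T_{g,h}(h'Ph'^\dagger)\bigr) \;=\; \det(Q,g^\dagger g)\,\det\!\bigl(Q,\,T((hh')P(hh')^\dagger)\bigr).
\]

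Next, I handle the denominator. Since $h\in\GL(F_\bullet)$ is upper triangular, I apply Eq. \ref{twoside_mult} with outer matrix $h'$ and inner argument $h^\dagger h$ to obtain
\[
\det\!\bigl(P,\,(hh')^\dagger(hh')\bigr) \;=\; \det\!\bigl(P,\,h'^\dagger (h^\dagger h) h'\bigr) \;=\; \det(P,h^\dagger h)\,\det(P,h'^\dagger h'),
\]
so $\det(P,h'^\dagger h') = \det(P,(hh')^\dagger(hh'))/\det(P,h^\dagger h)$. Combining the two displays,
\[
\frac{\det\!\bigl(Q,\,T_{g,h}(h'Ph'^\dagger)\bigr)}{\det(P,h'^\dagger h')} \;=\; \det(Q,g^\dagger g)\,\det(P,h^\dagger h)\cdot\frac{\det\!\bigl(Q,\,T((hh')P(hh')^\dagger)\bigr)}{\det\!\bigl(P,(hh')^\dagger(hh')\bigr)}.
\]

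Finally, I take the infimum over $h'\in\GL(F_\bullet)$ and substitute $h'' := hh'$. Because $h\in\GL(F_\bullet)$ and $\GL(F_\bullet)$ is a group, the map $h'\mapsto hh'$ is a bijection of $\GL(F_\bullet)$, so the infimum over $h'$ equals the infimum over $h''$; the leading factor $\det(Q,g^\dagger g)\det(P,h^\dagger h)$ is independent of $h'$ and pulls out. The remaining infimum is exactly $\capacity(T,P,Q)$ by Definition \ref{def:capacity}, yielding the claimed identity.

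There is no real obstacle here beyond careful bookkeeping; the key subtlety is only that Eqs. \ref{oneside_mult}--\ref{twoside_mult} require the relevant matrices to be upper triangular, which holds throughout because we have restricted $g\in\GL(E_\bullet)$ and $h,h'\in\GL(F_\bullet)$.
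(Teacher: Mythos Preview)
Your proof is correct and follows essentially the same change-of-variables argument as the paper. The only cosmetic difference is that the paper substitutes $y=hx$ first and then applies Eq.~\ref{twoside_mult} together with Eq.~\ref{twoside_inv} to handle the resulting $h^{-\dagger}h^{-1}$ in the denominator, whereas you factor the denominator directly via Eq.~\ref{twoside_mult} with $h'$ as the outer matrix and thereby avoid invoking Eq.~\ref{twoside_inv} at all.
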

%\begin{proof}\CF{here we could demonstrate twosidemult. That could also be remarked when the relative determinant is shown to be the thing from rep theory}.
%\end{proof}
%Finally, we must show we make substantial progress unless $\ds_{P,Q}T_j < \epsilon$.
The next lemma follows from Jensen's inequality applied to the eigenvalues of $\eta_j T^*(Q)\eta_j^\dagger$. 
\begin{lemma}[substantial progress]\label{lem:cap_incr}
Suppose $T(P) = I$, $\Tr P = \Tr Q = 1$, $\ds_{P,Q}T \geq \epsilon$, and $h\in H_{F_\circ(P)}$ such that $h^\dagger T^*(Q) h = I.$
%Then $$\capacity_{p,q,E_\bullet, F_\bullet} T_{I,h} \geq e^{ .3 \min\{\epsilon, p_n\}} \capacity_{P,Q} T_{I,h}.$$
Then $$\det(P, h^\dagger h) \geq e^{ .3 \min\{\epsilon, p_n\}}.$$
\end{lemma}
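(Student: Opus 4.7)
The plan is to translate the claim into an upper bound on $\log \det(P, T^*(Q))$ using Lemma~\ref{detpx_facts}, and then extract the required progress from Jensen's inequality applied to the eigenvalues of each $Y_j := \eta_j\, T^*(Q)\, \eta_j^\dagger$, using the normalization forced by $T(P) = I_m$.

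Since $h$ is upper triangular and $h^\dagger T^*(Q)\,h = I_n$, Equation~\ref{twoside_inv} of Lemma~\ref{detpx_facts} gives
$$\det(P, h^\dagger h) \;=\; \det(P,\, h^{-\dagger} h^{-1})^{-1} \;=\; \det(P, Y)^{-1}, \qquad Y := T^*(Q) \succ 0,$$
so it suffices to prove $\log \det(P, Y) \leq -0.3\,\min\{\epsilon, p_n\}$. The trace-adjoint identity $\Tr(T^*(Q)\,P) = \Tr(Q\,T(P))$, combined with $T(P) = I_m$ and $\Tr Q = 1$, yields the normalization $\sum_i p_i Y_{ii} = 1$. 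Since $T(P) = I_m$ also kills the second group of terms in $\ds_{P,Q}T$, the hypothesis becomes $\sum_j \Delta p_j\,\|Y_j - I_j\|^2 \geq \epsilon$.

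From the definition of the relative determinant, $\log \det(P, Y) = \sum_j \Delta p_j \sum_k \log \lambda_k^{(j)}$, where $\lambda_k^{(j)} > 0$ are the eigenvalues of the principal submatrix $Y_j \succ 0$. I would apply the scalar inequality
$$\log y \;\leq\; (y - 1) - \frac{(y-1)^2}{2\max(1,y)} \qquad (y > 0),$$
valid by comparing $\log y$ with its tangent parabola at $y = 1$ separately on $(0,1]$ and $[1,\infty)$ (using $(\log)''(t) = -1/t^2$). Summing over $k$, noting $\sum_k (\lambda_k^{(j)} - 1)^2 = \|Y_j - I_j\|^2$, and bounding $\max(1, \lambda_k^{(j)})$ above by $\max(1, \lambda_{\max}(Y_j))$ produces
$$\sum_k \log \lambda_k^{(j)} \;\leq\; \Tr Y_j - j \;-\; \frac{\|Y_j - I_j\|^2}{2\,\max\bigl(1, \lambda_{\max}(Y_j)\bigr)}.$$
Weighting by $\Delta p_j$ and summing, the linear part telescopes via $\sum_j \Delta p_j(\Tr Y_j - j) = \sum_i p_i Y_{ii} - \sum_j j\Delta p_j = 1 - 1 = 0$, leaving
$$\log \det(P, Y) \;\leq\; -\sum_j \Delta p_j\, \frac{\|Y_j - I_j\|^2}{2\max(1, \lambda_{\max}(Y_j))}.$$

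The main obstacle is converting this quadratic defect into the required $0.3\,\min\{\epsilon, p_n\}$, since $\lambda_{\max}(Y_j)$ is only a priori bounded by $\Tr Y_j \leq n/p_n$ via the diagonal bound $Y_{ii} \leq 1/p_i \leq 1/p_n$. I would close the gap by a dichotomy on the size of the diagonal entries. In the regime where all $Y_{ii}$ stay close to $1$, the Cauchy--Schwarz estimate $|Y_{il}|^2 \leq Y_{ii}Y_{ll}$ forces $\lambda_{\max}(Y_j) = O(1)$, so the defect above is a constant multiple of $\ds_{P,Q}T \geq \epsilon$. In the complementary regime some $Y_{ii}$ is genuinely large, and then $(Y_{ii}-1)^2 \geq \Omega(1/p_n)$ is already large enough on its own that the diagonal-only Hadamard estimate $\log\det(P,Y) \leq \sum_i p_i \log Y_{ii}$, combined with the one-variable bound $\log y \leq y-1 - \tfrac{1}{2}(y-1)^2$ on $(0,1]$ and $Y_{ii} \leq 1/p_n$, delivers a contribution of order $p_n$. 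Taking the worse of the two regimes explains the $\min\{\epsilon, p_n\}$ scaling, and tracking constants gives the stated $0.3$.
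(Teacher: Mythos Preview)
Your reduction to $\log\det(P,Y)\le -0.3\min\{\epsilon,p_n\}$, the scalar inequality, and the telescoping of the linear part are all correct and track the paper's argument closely. The gap is in your closing dichotomy. Your Case~1 claim is false: bounded diagonal entries of a positive semidefinite matrix do \emph{not} force $\lambda_{\max}=O(1)$. The $j\times j$ all-ones matrix has every diagonal entry equal to $1$, satisfies $|Y_{il}|^2\le Y_{ii}Y_{ll}$ with equality, and has $\lambda_{\max}=j$; the Cauchy--Schwarz estimate on off-diagonals buys you nothing here. Your Case~2 is also not right as written: ``$Y_{ii}$ genuinely large'' does not give $(Y_{ii}-1)^2\ge\Omega(1/p_n)$, and the bound $\log y\le (y-1)-\tfrac12(y-1)^2$ you invoke holds only on $(0,1]$, precisely where it cannot be applied to a large $Y_{ii}$.

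The paper's fix is to split on the \emph{eigenvalues} rather than the diagonals. It packages the eigenvalues $\lambda_{ij}$ of the $Y_j$ into a random variable $X$ with $\E[X]=1$ and $\V[X]=\ds_{P,Q}T\ge\epsilon$, uses the piecewise bounds $\log x\le (x-1)-0.3(x-1)^2$ on $(0,2]$ and $\log x\le 0.7(x-1)$ on $(2,\infty)$, and then observes via Cauchy interlacing that if \emph{any} $\lambda_{ij}>2$ then already $\lambda_{\max}(Y_n)>2$, an event carrying probability at least $\Delta p_n=p_n$. Your own displayed inequality can be salvaged by the same interlacing idea: split on whether $\lambda_{\max}(Y)\le 2$ (so all denominators are at most $2$, giving $\ge\epsilon/4$) or $\lambda_{\max}(Y)>2$ (where the $j=n$ term alone contributes $p_n(\lambda_{\max}(Y)-1)^2/(2\lambda_{\max}(Y))\ge p_n/4$); this recovers the result with constant $0.25$ in place of $0.3$.
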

%\begin{proof}\CF{proof could come after the theorem or in the appendix}\end{proof}
\begin{lemma}[capacity upper bound]\label{lem:cap_ub}
Suppose $T(P) = I_m$ or $T^*(Q) = I_n$. Then $\capacity( T, P, Q) \leq 1.$
\end{lemma}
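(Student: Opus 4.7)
The plan is to analyze the capacity through judicious substitutions of $h$ into the definition
$$\capacity(T,P,Q) = \inf_h \frac{\det(Q, T(hPh^\dagger))}{\det(P, h^\dagger h)}.$$
The case $T(P) = I_m$ falls out immediately: setting $h = I_n$ gives ratio $\det(Q, T(P))/\det(P, I_n) = \det(Q, I_m)/1 = 1$, since $\det(Q, I_m) = \prod_{j=1}^m(\det I_j)^{\Delta q_j} = 1$ and likewise $\det(P, I_n) = 1$.

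For the case $T^*(Q) = I_n$, I would proceed in two stages. First, reduce to $\Tr P = \Tr Q$. Substituting the scalar $h = c I_n$ (which is upper triangular) and using the homogeneities $\det(Q, c^2 X) = c^{2\Tr Q}\det(Q, X)$ and $\det(P, c^2 I_n) = c^{2\Tr P}$ (both immediate from the definition together with $\sum_j j\Delta q_j = \Tr Q$), the ratio becomes $c^{2(\Tr Q - \Tr P)}\det(Q, T(P))$. Sending $c \to 0$ or $c \to \infty$ according to the sign of $\Tr Q - \Tr P$ shows the infimum is $0$ whenever $\Tr P \neq \Tr Q$, so $\capacity(T,P,Q) = 0 \leq 1$ in that subcase and we are done.

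Assume then $\Tr P = \Tr Q$. The map
$$Y \mapsto \log\det(Q, Y) = \sum_{j=1}^m \Delta q_j\,\log\det(\eta_j Y \eta_j^\dagger)$$
is a nonnegative combination of concave functions of $Y \succ 0$ (each $\Delta q_j \geq 0$), hence concave. A short differential computation shows its gradient at $Y = I_m$ equals $\sum_j \Delta q_j\,\eta_j^\dagger \eta_j = \diag(q_1,\ldots,q_m) = Q$, using $\sum_{j \geq k}\Delta q_j = q_k$. The tangent-line inequality at $I_m$ then reads $\log\det(Q, Y) \leq \Tr(Q(Y - I_m))$; applying this at $Y = T(P)$ and taking adjoints yields
$$\log\det(Q, T(P)) \leq \Tr(Q T(P)) - \Tr Q = \Tr(T^*(Q) P) - \Tr Q = \Tr P - \Tr Q = 0.$$
Substituting $h = I_n$ in the capacity then gives $\capacity(T,P,Q) \leq \det(Q, T(P)) \leq 1$.

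The main obstacle is the asymmetry of the capacity formula in $T$ versus $T^*$: the case $T(P) = I_m$ is nothing more than plugging in $h = I$, while $T^*(Q) = I_n$ requires both the scalar substitution (to rule out $\Tr P \neq \Tr Q$) and the concavity/tangent-line bound on $\log\det(Q,\cdot)$ to control $\det(Q, T(P))$.
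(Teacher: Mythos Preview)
Your proof is correct and follows essentially the same route as the paper: substitute $h = I_n$ to reduce to bounding $\det(Q, T(P)) \leq 1$, then use a concavity/Jensen argument together with the identity $\Tr(QT(P)) = \Tr(T^*(Q)P) = \Tr P$. The paper phrases the concavity step as AM--GM on the eigenvalues $\lambda_{ij}$ of the principal submatrices $\eta_i T(P)\eta_i^\dagger$ (tacitly assuming $\Tr P = \Tr Q = 1$ from context), while you phrase it as the tangent-line inequality for the concave map $Y \mapsto \log\det(Q,Y)$ at $Y = I_m$; these are the same inequality in different clothing. Your additional scalar substitution $h = cI_n$ to dispose of the case $\Tr P \neq \Tr Q$ is a nice touch that the paper omits.
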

%\begin{proof}\CF{proof is extremely easy and could come after the theorem or in the appendix}\end{proof}

We now assemble the lemmas:
\begin{proof}[Proof of Theorem \ref{thm:alg_tri_term_stoc}]The existence of the Cholesky decomposition and Lemma \ref{lem:dual_inv} imply each step is possible; e.g. $gg^\dagger$ will be the Cholesky decomposition of $T(h_{j - 1} P h_{j - 1})^{-1}$. Since $\capacity(T, P, Q) > 0$, Lemma \ref{lem:cap_evol} implies $\capacity( T_1, P, Q)>0$. Suppose $j \geq 2$ and $j$ even. Provided $\ds_{P,Q} T_j \geq \epsilon$, Lemmas \ref{lem:cap_evol} and \ref{lem:cap_incr} imply
$$\capacity( T_j, P, Q) \geq e^{.3\min\{p_n,\epsilon\}} \capacity( T_j, P, Q).$$
 If $j$ is odd and $j \geq 3$, then Lemma  \ref{lem:cap_evol} and Lemma \ref{lem:cap_incr} applied to $T^*$ with the roles of $P$ and $Q$ reversed and the roles of $G$ and $H$ reversed implies 
$\capacity( T_j, P, Q) \geq e^{.3\min\{q_m,\epsilon\}} \capacity( T_j, P, Q).$
By the very easy Lemma \ref{lem:cap_ub},  $\capacity(T_j, P, Q) \leq 1$ for $j \geq 1$. A bit of algebra shows Algorithm \ref{alg:alg_g_tri} terminates in at most $TIME$ iterations.
\end{proof}

% % %
\subsection{Proof of Theorem \ref{thm:apx_delta_scalable}}\label{subsec:apx_delta_scalable_proof}
Here we finish the proof of Theorem \ref{thm:apx_delta_scalable} by showing $\ref{delt_rnd}\implies \ref{delt_cap}$. We prove the stronger statement that approximate scalability implies approximate rank-nondecreasingness.
%Due to space constraints, we must omit the proof that $\ref{delt_rnd}\implies \ref{delt_cap}$. Here we state the proposition that implies it, which states that approximate scalings imply approximate rank-nondecreasingness.
\begin{proposition}[scalability implies rank-nondecreasingness]\label{prp:apx_rnd_stoc}
Suppose there exists an $\epsilon$-$(I_m \to  Q, I_n \to P)$-scaling of $T$ by $\GL(E_\bullet)\times\GL(F_\bullet)$. Then $|\Tr Q - \Tr P| \leq (\sqrt{n} + \sqrt{m})\epsilon$ and 
\begin{equation*} \sum_{i=1}^n \Delta q_i \dim E_i \cap L + \sum_{j =1}^n \Delta p_i \dim F_j \cap R  \leq \Tr P + (\sqrt{n} + \sqrt{m})\epsilon %\label{approx_pqrnd_eq}
  \end{equation*}
for every $T$-independent pair $(L,R)$. 
%Further, if $\epsilon = 0$ then equality can only be attained when $L$ commutes with $Q$ and when there is a complementary $T$-independent pair $(L',R')$ such that $L + L' = \CC^m, R + R' = \CC^m$ 
\end{proposition}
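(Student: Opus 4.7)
The plan is to leverage the near-scalability assumption to apply a Gurvits-style trace argument, modified to accommodate the weighted structure coming from $P$ and $Q$. Let $(g,h) \in \GL(E_\bullet)\times\GL(F_\bullet)$ realize an $\epsilon$-$(I_n\to Q, I_m\to P)$-scaling, so that the matrices $M := T_{g,h}(I_n)-Q$ and $N := T_{g,h}^*(I_m)-P$ have norm at most $\epsilon$. The trace identity $|\Tr Q - \Tr P| \leq (\sqrt{n}+\sqrt{m})\epsilon$ is immediate from the conservation $\Tr T_{g,h}(I_n) = \Tr T_{g,h}^*(I_m)$ and Cauchy--Schwarz in the form $|\Tr X| \leq \sqrt{\text{rank}(X)}\,\|X\|$. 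So the real content is the second inequality.

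First I would transport the $T$-independent pair through the scaling. If $\pi_L T(\pi_R)=0$, i.e.\ $A_i R \subset L^\perp$ for all $i$, then the Kraus operators $g^\dagger A_i h$ of $T_{g,h}$ satisfy $(g^\dagger A_i h)(h^{-1}R) \subset g^\dagger L^\perp = (g^{-1}L)^\perp$. Hence $(L'',R''):=(g^{-1}L, h^{-1}R)$ is $T_{g,h}$-independent. Crucially, since $g,h$ are upper triangular they preserve every flag element, so $g^{-1}(E_i\cap L) = E_i \cap L''$ and likewise for $F_j \cap R''$; these intersection dimensions are therefore unchanged.

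Second, the central combinatorial observation is that for any subspace $L \subset \CC^m$,
\begin{equation*}
\Tr \pi_L Q \;\geq\; \sum_{i=1}^m \Delta q_i \,\dim(E_i\cap L).
\end{equation*}
To see this, write $Q = \sum_i \Delta q_i\, \pi_{E_i}$ and choose an orthonormal basis $u_1,\dots,u_{\dim L}$ of $L$ adapted to the flag: the jump set $I_L = \{i: \dim(E_i\cap L) > \dim(E_{i-1}\cap L)\}$ is enumerated as $i_1 < i_2 < \cdots$, and by induction $u_k$ is chosen in $E_{i_k}\cap L$ orthogonal to $u_1,\dots,u_{k-1}$. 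Then $\|\pi_{E_i} u_k\|^2 = 1$ for $i \geq i_k$, so $\langle u_k, Q u_k\rangle \geq \sum_{i\geq i_k} \Delta q_i = q_{i_k}$, giving $\Tr \pi_L Q \geq \sum_{i\in I_L} q_i = \sum_i \Delta q_i \dim(E_i\cap L)$. The identical argument with $F_\bullet$ and $P$ handles $R$.

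Finally I assemble the trace inequality. By the combinatorial bound and $\|M\|\leq \epsilon$,
\begin{equation*}
\textstyle\sum_i \Delta q_i\, \dim(E_i\cap L) \leq \Tr \pi_{L''} Q \leq \Tr \pi_{L''} T_{g,h}(I_n) + \sqrt{\dim L''}\,\epsilon.
\end{equation*}
Using $T_{g,h}$-independence, $\Tr \pi_{L''} T_{g,h}(I_n) = \Tr \pi_{L''} T_{g,h}(I_n - \pi_{R''}) \leq \Tr T_{g,h}^*(I_m)(I_n-\pi_{R''})$, and this is at most $\Tr P(I_n - \pi_{R''}) + \sqrt{n-\dim R''}\,\epsilon$ by $\|N\|\leq\epsilon$. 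Subtracting $\Tr \pi_{R''} P \geq \sum_j \Delta p_j \dim(F_j\cap R)$ from $\Tr P$ yields the required bound with slack at most $(\sqrt{\dim L''} + \sqrt{n-\dim R''})\epsilon \leq (\sqrt{m}+\sqrt{n})\epsilon$.

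The only subtle step is Step 2 (the combinatorial trace inequality) and the flag-preservation property of upper triangulars used in Step 1. Once those are in place the argument is a direct generalization of Gurvits' classical bound $\dim L + \dim R \leq n + O(\epsilon)$.
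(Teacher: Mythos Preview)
Your proof is correct and follows essentially the same route as the paper: transport the $T$-independent pair through the triangular scaling (preserving flag intersections), invoke the Rayleigh-trace inequality $\Tr Q\pi_{L''}\geq\sum_i\Delta q_i\dim(E_i\cap L)$, and then chain $\Tr Q\pi_{L''}\to\Tr T_{g,h}(I)\pi_{L''}\to\Tr T_{g,h}^*(I)(I-\pi_{R''})\to\Tr P(I-\pi_{R''})$ losing $(\sqrt m+\sqrt n)\epsilon$ along the way. The only cosmetic difference is that you supply an explicit adapted-basis proof of the Rayleigh-trace inequality where the paper cites it as a standard fact, and your intermediate error terms $\sqrt{\dim L''}$ and $\sqrt{n-\dim R''}$ are slightly sharper before being relaxed to $\sqrt m+\sqrt n$.
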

\begin{proof}
%The proof proceeds by taking a $T$-independent pair $(L,R)$ that violates the inequalities for $(P,Q)$-rank-nondecreasingness. Critically, $(L', R') = (g^{-1} L, h^{-1}R)$ are $T_{g,h}$-independent pairs that also violate the inequality. Next, one shows that $ \Tr Q \pi_L = \Tr T_{g,h}(I) \pi_{L'}$ larger than it could possibly be.\\

Suppose there exists an $\epsilon$-$(I_V \to  Q, I_W \to P)$-scaling $T_{g,h}$ of $T$ by $\GL(E_\bullet)\times \GL(F_\bullet)$. Firstly, 
\begin{align*}
|\Tr Q - \Tr P|\\
\leq |\Tr (Q - T_{g,h}(I))| + |\Tr (P - T_{h,g}^*(I))|\\
\leq (\sqrt{n} +\sqrt{m})\epsilon.
\end{align*}
Next, consider any $T$-independent pair $(L,R)$. Observe that $(\underline{L},\underline{R}) =(g^{-1} L, h^{-1} R)$ is a $T_{g,h}$-independent pair and $\dim \underline L \cap E_i = \dim L \cap E_i$ for all $i \in [m]$ and $\dim \underline R \cap F_i = \dim R \cap F_i$ for all $i \in [n]$ by membership of $(g,h) \in \GL(E_\bullet)\times \GL(F_\bullet)$.

We use a standard fact, which is essentially an inequality for the Rayleigh trace. See \cite{F00}.
\begin{fact} Suppose $ \textbf a$ is a nonincreasing sequence of positive real numbers. Set $A = \diag( \textbf a)$. Let $E_\bullet$ be the standard flag of $\CC^d$, and $\pi_L$ the orthogonal projection to the subspace $L \subset \CC^d$. Then 
$$ \Tr A \pi_L \geq \sum_{i = 1}^d \Delta a_i \dim L \cap E_i $$ with equality if and only if $\pi_L$ commutes with $A$.
\end{fact}
Since $(\underline{L},\underline{R})$ is $T$-independent, $T_{g,h}^*(\pi_{\underline{L}})\pi_{\underline{R}} = 0$. Thus \begin{align*}
\sum_{i = 1}^m \Delta q_i \dim L \cap E_i \leq \Tr Q \pi_{\underline{L}} \nonumber &\leq \Tr T_{g,h}(I) \pi_{\underline{L}} + \epsilon \sqrt{m}  \nonumber\\
&= \Tr T^*_{h,g}( \pi_{\underline{L}}) +  \epsilon  \sqrt{m}\nonumber\\
&= \Tr T^*_{h,g}( \pi_{\underline{L}})(I - \pi_{\underline{R}}) +   \epsilon\sqrt{m} \nonumber\\
&\leq \Tr T^*_{h,g}(I)(I - \pi_{\underline{R}}) + \epsilon \sqrt{m}  \label{equality?}\\
&\leq \Tr P(I - \pi_{\underline{R}}) +  (\sqrt{n} + \sqrt{m}) \epsilon \nonumber\\
&= \Tr P - \sum_{j =1}^n \Delta p_i \dim F_j \cap R + (\sqrt{n} + \sqrt{m}) \epsilon.\nonumber
\end{align*} \end{proof}

\begin{corollary}\label{cor:sure_rnd}
If $\epsilon$ is smaller than the minimum nonzero number among 
\begin{equation}\left\{\frac{1}{\sqrt{m} + \sqrt{n}} \left| \Tr P - \sum_{i \in I} q_i - \sum_{j \in J} p_j.\right|: I \subset [m], J \subset [n]\right\}\label{ds_to_rnd},\end{equation}
and there exists an $\epsilon$-$(I_n \to  Q, I_m \to P)$-scaling of $T$ by $\GL(E_\bullet)\times \GL(F_\bullet),$ then $T$ is $(P,Q)$-rank-nondecreasing. 
%In particular, if $T$ is approximately $(\gl(V)_{F_\circ}, \gl(W)_{F_\circ})$-scalable to $(I_V \to Q, I_W \to P)$, then $T$ is $(P,Q)$-rank-nondecreasing.\end{cor}
\end{corollary}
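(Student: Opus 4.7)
The plan is to deduce Corollary \ref{cor:sure_rnd} as a routine consequence of Proposition \ref{prp:apx_rnd_stoc} combined with the Schubert-cell reformulation from Remark \ref{rem:schubert}. The key insight is that the set of values $\sum_{i\in I} q_i + \sum_{j \in J} p_j - \Tr P$ over $I \subset [m]$, $J \subset [n]$ is finite, so if a candidate violation of $(P,Q)$-rank-nondecreasingness is positive, it must exceed some minimum gap; choosing $\epsilon$ below this gap (after rescaling by $\sqrt{m}+\sqrt{n}$) forces the violation to actually be nonpositive.

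More concretely, I would argue by contraposition. Assume $T$ is not $(P,Q)$-rank-nondecreasing; then there exists a $T$-independent pair $(L,R)$ violating \eqref{diff_rnd_eq}. By Remark \ref{rem:schubert}, letting $I \subset [m]$ record the dimension jumps $\dim E_i \cap L - \dim E_{i-1} \cap L$ and similarly $J \subset [n]$ for $R$, we obtain
\begin{equation*}
\sum_{i=1}^m \Delta q_i \dim E_i \cap L = \sum_{i \in I} q_i \quad \text{and} \quad \sum_{j=1}^n \Delta p_j \dim F_j \cap R = \sum_{j \in J} p_j,
\end{equation*}
so the violation reads $\sum_{i \in I} q_i + \sum_{j \in J} p_j - \Tr P > 0$.

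Now apply Proposition \ref{prp:apx_rnd_stoc} to the same $T$-independent pair $(L,R)$: the hypothesis that $T$ has an $\epsilon$-$(I_n \to Q, I_m \to P)$-scaling by $\GL(E_\bullet)\times\GL(F_\bullet)$ yields
\begin{equation*}
\sum_{i \in I} q_i + \sum_{j \in J} p_j \;\leq\; \Tr P + (\sqrt{n}+\sqrt{m})\,\epsilon.
\end{equation*}
Combining this with the strict inequality above shows $0 < \sum_{i \in I} q_i + \sum_{j \in J} p_j - \Tr P \leq (\sqrt{n}+\sqrt{m})\,\epsilon$. Hence $\bigl|\Tr P - \sum_{i \in I} q_i - \sum_{j \in J} p_j\bigr|$ is a \emph{nonzero} element of the set in \eqref{ds_to_rnd} (before the $1/(\sqrt{m}+\sqrt{n})$ scaling) that is at most $(\sqrt{n}+\sqrt{m})\epsilon$. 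Dividing through by $\sqrt{m}+\sqrt{n}$ contradicts the hypothesis that $\epsilon$ is strictly smaller than the minimum nonzero member of the set in \eqref{ds_to_rnd}.

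There is essentially no obstacle here: the only thing to double-check is that the translation from an arbitrary $T$-independent pair to a pair $(I,J)$ of subsets goes through exactly as in Remark \ref{rem:schubert}, which it does because $L$ and $R$ belong to unique Schubert cells of $E_\bullet$ and $F_\bullet$ respectively. The argument is a standard ``finite gap implies exact'' style cleanup that converts the approximate inequality of Proposition \ref{prp:apx_rnd_stoc} into the exact combinatorial inequality defining $(P,Q)$-rank-nondecreasingness.
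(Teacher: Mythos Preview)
Your proof is correct and is precisely the intended argument: the paper states Corollary~\ref{cor:sure_rnd} immediately after Proposition~\ref{prp:apx_rnd_stoc} without a separate proof because it follows by exactly the finite-gap contraposition you wrote out, using Remark~\ref{rem:schubert} to rewrite the left-hand side of \eqref{diff_rnd_eq} as $\sum_{i\in I} q_i + \sum_{j\in J} p_j$.
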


 We first show that the set of $\mathbf{p}, \mathbf{q}$ such that $\capacity(T, P, Q)$ is nonzero is convex. This immediately follows from the next proposition:

\begin{proposition}[$\log$-concavity]\label{prp:log_concave}
$e^{H(\mathbf{p})} \capacity(T, P, Q)$ is $\log$-concave in $\mathbf{p}$ and $\mathbf{q}$.
\end{proposition}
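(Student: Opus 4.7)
The plan is to absorb $e^{H(\mathbf{p})}$ into the capacity by a clever change of variables in the infimum, so that the resulting objective for each fixed $h'$ becomes linear in $(\mathbf{p},\mathbf{q})$. Infima of families of linear functions are concave, so $\log\bigl[e^{H(\mathbf{p})}\capacity(T,P,Q)\bigr]$ will be concave in $(\mathbf{p},\mathbf{q})$, which is precisely what we want. We may assume $P \succ 0$ (the boundary case follows by continuity since the polytope of admissible $\mathbf p$ is the closure of its interior).

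First I would substitute $h = h' P^{-1/2}$ in the defining formula \eqref{eq:capacity}. Because $P$ is diagonal, $P^{\pm 1/2}\in\GL(F_\bullet)$, and so this is a bijection of $\GL(F_\bullet)$ onto itself. Under this substitution, $h P h^\dagger = h' h'^\dagger$, and using \eqref{twoside_mult} and \eqref{twoside_inv} from Lemma \ref{detpx_facts},
\[
\det(P, h^\dagger h) \;=\; \det(P, P^{-1/2}\,h'^\dagger h'\,P^{-1/2}) \;=\; \det(P,P^{-1})\,\det(P,h'^\dagger h') \;=\; \det(P,P)^{-1}\,\det(P,h'^\dagger h').
\]
So the whole problem reduces to evaluating $\det(P,P)$.

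Next I would carry out the direct computation: by \eqref{eq:rel_det},
\[
\log \det(P,P) \;=\; \sum_{j=1}^{n} \Delta p_j \log(p_1 p_2 \cdots p_j) \;=\; \sum_{i=1}^{n} \log p_i \sum_{j\ge i} \Delta p_j \;=\; \sum_{i=1}^{n} p_i \log p_i \;=\; -H(\mathbf{p}),
\]
using that $\sum_{j\ge i} \Delta p_j = p_i$ (telescoping, recalling the convention $p_{n+1}=0$). Substituting this back,
\[
e^{H(\mathbf{p})}\capacity(T,P,Q) \;=\; \inf_{h'\in\GL(F_\bullet)}\frac{\det\bigl(Q,T(h'h'^\dagger)\bigr)}{\det(P,h'^\dagger h')}.
\]

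Finally, for each fixed $h'$, the logarithm of the integrand equals
\[
\sum_{j=1}^{m}\Delta q_j \log\det\eta_j T(h'h'^\dagger)\eta_j^\dagger \;-\; \sum_{j=1}^{n}\Delta p_j \log\det\eta_j h'^\dagger h' \eta_j^\dagger,
\]
which is manifestly linear in $(\mathbf{p},\mathbf{q})$ (through the differences $\Delta p_j, \Delta q_j$, which are themselves linear functionals of $\mathbf{p},\mathbf{q}$). The infimum over $h'$ of a family of linear functions is concave, so $\log\!\bigl[e^{H(\mathbf p)}\capacity(T,P,Q)\bigr]$ is concave in $(\mathbf p,\mathbf q)$, proving the proposition. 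The only subtle step is spotting the substitution $h = h'P^{-1/2}$ and the identity $\det(P,P) = e^{-H(\mathbf p)}$; everything else is formal manipulation with Lemma \ref{detpx_facts} and the observation that an infimum of affine functions is concave.
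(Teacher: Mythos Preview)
Your proof is correct and follows essentially the same approach as the paper: the paper also reduces to the identity
\[
\capacity(T,P,Q)=e^{-H(\mathbf p)}\inf_{h\in\GL(F_\bullet)}\frac{\det(Q,T(hh^\dagger))}{\det(P,h^\dagger h)}
\]
via a change of variables (which it omits; you spell it out as $h=h'P^{-1/2}$ together with the computation $\det(P,P)=e^{-H(\mathbf p)}$), and then observes that the log of the right-hand side is an infimum of linear functions of $(\mathbf p,\mathbf q)$.
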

\begin{proof} First, one can make a change of variables argument, which we omit, to see that 
$$\capacity(T, P,Q) = e^{-H(\mathbf{p})} \inf_{h \in \GL(F_\bullet)} \frac{\det(Q, T(hh^\dagger))}{\det(P, h^\dagger h )}.$$
However, the left-hand side is manifestly $\log$-convex in $\mathbf{p}$ and $\mathbf{q}$! To see why, let's expand. 
\begin{align*}
\log \inf_{h \in \GL(F_\bullet)} \frac{\det(Q, T(hh^\dagger))}{\det(P, h^\dagger h )} =\inf_{h \in  \GL(F_\bullet)} \log \frac{\det(Q, T(hh^\dagger))}{\det(P, h^\dagger h )}\\
= \inf_{h \in  \GL(F_\bullet)} \sum_{i=1}^m \Delta q_i \log \det \eta_i T\left(h h^\dagger \right)\eta_i^\dagger - \sum_{j = 1}^n \Delta p_j \log \det \eta_j h^\dagger h \eta_j^\dagger 
%:= \inf_{h \in \gl(V)_{F_\bullet}} g((p,q), h),
\end{align*}
with the convention $0 \log 0 = 0$. Is of the form $f:\RR^{m + n}_+ \times U \to \RR \cup \{-\infty\}$
defined by
$$f:(x,u) \mapsto  \langle x , g(u) \rangle $$ where $U$ is some set; in this case $U =  \GL(F_\bullet)$. It is easy to check that $f$ is always concave when $x, g(u)$ remain finite; with slightly more care one can check it when $g(u)_i \in \RR \cup \{-\infty\}$ with the convention $x_i g(u)_i = 0$ when $x_i = 0$ and $g(u)_i = - \infty$.
\end{proof}
We are ready to prove the theorem. 
\begin{proposition}
Theorem \ref{thm:apx_delta_scalable} holds; further, the set of $(\mathbf{p}, \mathbf{q})$ each with unit sum such that any of the three conditions hold is a convex polytope $\cK(T, E_\bullet, F_\bullet)$ with rational vertices.
\end{proposition}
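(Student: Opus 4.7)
The plan is to assemble (1) $\iff$ (2) $\iff$ (3) of Theorem \ref{thm:apx_delta_scalable} from three implications. Two of them are already in place: (3) $\implies$ (1) follows from Proposition \ref{prp:apx_rnd_stoc} together with Corollary \ref{cor:sure_rnd}, which applies because approximate scalability supplies $\epsilon$-scalings for every $\epsilon > 0$ and hence for $\epsilon$ smaller than any fixed positive threshold; (2) $\implies$ (3) is Theorem \ref{thm:alg_tri_term_stoc}. So the real work is (1) $\implies$ (2), together with verifying the polytope structure of $\cK(T, E_\bullet, F_\bullet)$.

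The polytope structure is essentially bookkeeping. Remark \ref{rem:schubert} rewrites $(P,Q)$-rank-nondecreasingness as a finite list of inequalities $\sum_{i \in I} q_i + \sum_{j \in J} p_j \leq \Tr P$ with integer coefficients, where $(I,J)$ ranges over a finite collection determined by $T$. Combined with the unit-sum, monotonicity, and nonnegativity constraints on $\mathbf{p}, \mathbf{q}$, this presents $\cK(T, E_\bullet, F_\bullet)$ as a bounded intersection of finitely many rational halfspaces, hence a rational convex polytope. In particular every vertex is rational and, lying in $\cK$, itself satisfies condition (1).

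The remaining implication (1) $\implies$ (2) I plan in two stages. First, for rational $(\mathbf{p}, \mathbf{q})$ with unit sum, pick a positive integer $N$ so that $(N\mathbf{p}, N\mathbf{q})$ are integer partitions of $N$. Since each inequality in Definition \ref{def:rnd_diff} is homogeneous of degree one in $(\mathbf{p}, \mathbf{q})$, the pair $(NP, NQ)$ also satisfies condition (1); Item 4 of Theorem \ref{thm:stoc_reds} then yields rank-nondecreasingness of $\trun_{NP,NQ} T$, Gurvits' theorem (Theorem \ref{thm:gurvits}) yields $\capacity \trun_{NP,NQ} T > 0$, and Item 3 of Theorem \ref{thm:stoc_reds} identifies this with $\capacity(T, NP, NQ)$; a short homogeneity calculation from Definition \ref{def:rel_det} then extracts $\capacity(T, P, Q) > 0$. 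Second, for arbitrary (possibly irrational) $(\mathbf{p}, \mathbf{q}) \in \cK$, I express it as a convex combination of the rational vertices of $\cK$, at each of which the first stage gives positive capacity. Proposition \ref{prp:log_concave} says that $H(\mathbf{p}) + \log \capacity(T, P, Q)$ is concave on the simplex with values in $\RR \cup \{-\infty\}$, so its value at $(\mathbf{p}, \mathbf{q})$ is bounded below by the corresponding convex combination of finite values, hence is finite, giving $\capacity(T, P, Q) > 0$. The one place where I expect friction is making sure nothing in the argument silently requires $p_n > 0$ or $q_m > 0$, since rank-nondecreasing $(\mathbf{p}, \mathbf{q})$ can lie on the boundary of the simplex; here the $0\log 0 = 0$ convention already built into Definitions \ref{def:rel_det} and \ref{def:capacity} is what keeps every quantity finite.
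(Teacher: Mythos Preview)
Your approach is essentially the paper's: show $\cC \subset \cS \subset \cK$ directly, use the reduction (Theorem \ref{thm:stoc_reds}) plus Gurvits to get $\cC \cap \QQ^{m+n} = \cK \cap \QQ^{m+n}$, then invoke rational vertices of $\cK$ and log-concavity (Proposition \ref{prp:log_concave}) to force $\cK \subset \cC$.

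One correction: your sentence ``(2) $\implies$ (3) is Theorem \ref{thm:alg_tri_term_stoc}'' is incomplete, and the $0\log 0$ convention is not the fix. Theorem \ref{thm:alg_tri_term_stoc} (and Algorithm \ref{alg:alg_g_tri}) takes \emph{nonsingular} $P,Q$ as input and produces $\epsilon$-$(P\to I_m, Q\to I_n)$-scalings, not $(I_n\to Q, I_m\to P)$-scalings. The paper closes this gap exactly as you anticipate is needed, via Lemma \ref{inv_red_lite} and Proposition \ref{prp:in_out}: pass to $\underline{T},\underline{P},\underline{Q}$ (Item \ref{cap_lower} preserves positive capacity), run the algorithm there, convert with Proposition \ref{prp:in_out}, and lift back with Item \ref{scal_lifts}. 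So the ``friction'' you flagged is real, but it is handled by the projection to $\supp P,\supp Q$, not by the $0^0=1$ convention in the relative determinant.
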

\begin{proof}%[Proof of $\ref{delt_rnd}\implies \ref{delt_cap}$ of Theorem \ref{thm:apx_delta_scalable}.]
Define
$$\cC(T, E_\bullet, F_\bullet) \subset \cS(T, E_\bullet, F_\bullet) \subset \cK(T, E_\bullet, F_\bullet),$$ the set of pairs $(\mathbf{p}, \mathbf{q})$ each summing to one such that, respectively, $\capacity(T, P, Q) > 0$, $T$ is approximately $(I_n \to P, I_m \to Q)$-scalable, and $T$ is $(P,Q)$-rank-nondecreasing. The left inclusion is Theorem \ref{thm:alg_tri_term_stoc}, along with Lemma \ref{inv_red_lite} and Proposition \ref{prp:in_out}, and the right inclusion is Proposition \ref{prp:apx_rnd_stoc}.\\
\indent The reduction characterizes exactly the intersection of each of these sets with $\QQ^{n + m}$ (we may always scale so that rational $\mathbf{p}, \mathbf{q}$ become integral without changing scalability, rank-nondecreasingness, or nonvanishing of capacity). By Theorem \ref{thm:stoc_reds},
$$ \cC(T, E_\bullet, F_\bullet)\cap \QQ^{n + m} = \cK(T, E_\bullet, F_\bullet)\cap \QQ^{n + m}.$$
%Lemma \ref{inv_red} shows the above inclusions and inequalities hold even when $P$ or $Q$ is singular. 
Since $\cK(T, E_\bullet, F_\bullet)$ is a convex polytope with rational vertices, $\cK(T, E_\bullet, F_\bullet)\cap \QQ^{n + m}$ contains the vertices of $\cK(T, E_\bullet, F_\bullet)$! However, $\cC(T, E_\bullet, F_\bullet)$ is convex by Proposition \ref{prp:log_concave}, so it contains $\cK(T, E_\bullet, F_\bullet)$. The three sets must be the same; this completes the proof.
\end{proof}

% % % 
\subsection{Running Time of Algorithm \ref{alg:alg_g_tri}}
In order to use the guarantees from the previous subsection to bound the running time of Algorithm \ref{alg:alg_g_tri}, we must bound the capacity below. For this we will need to use a nontrivial lower bound on $\capacity T$ from \cite{GGOW16}.  
\begin{theorem}[Garg et. al. \cite{GGOW16}]\label{thm:cap_lower_bd}
If $T:\Mat_{N\times N}\CC \to \Mat_{N\times N}\CC$ is a rank-nondecreasing completely positive operator with Kraus operators $A_1 \dots A_R$ with Gaussian integer entries, then $$\capacity T \geq e^{-N\log (R N^4)}.$$
\end{theorem}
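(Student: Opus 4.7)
The plan is to lower-bound $\capacity T$ via invariant theory. The starting point is a now-classical dictionary: the Kraus tuple $\mathbf{A} = (A_1, \ldots, A_R)$ lies in the null cone of the left-right action of $\sL_N(\CC) \times \sL_N(\CC)$ on $\Mat_{N \times N}(\CC)^R$ given by $(g,h)\cdot (A_1, \ldots, A_R) = (g A_1 h, \ldots, g A_R h)$ if and only if $T$ is \emph{not} rank-nondecreasing (essentially Gurvits). So under our hypothesis, $\mathbf{A}$ is outside the null cone, and by the Hilbert--Mumford criterion there is a homogeneous $(\sL_N \times \sL_N)$-invariant polynomial $P$ of positive degree with $P(\mathbf{A}) \ne 0$.

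The next step is to control the degree of such a $P$. I would invoke the Derksen--Makam degree bound \cite{DM17}, which asserts that the invariant ring $\CC[\Mat_{N\times N}(\CC)^R]^{\sL_N \times \sL_N}$ is generated by polynomials of degree at most $N^4$ (in the coarse grading where each $A_i$ contributes one to the degree). Hence we may take $\deg P \le N^4$. The classical description of matrix semi-invariants (Derksen--Weyman, Domokos--Zubkov) further provides an $\ZZ$-spanning set for each graded piece, so after rescaling $P$ has entries in $\ZZ$ in the matrix-coordinate basis. Because $\mathbf{A}$ has Gaussian integer entries, $P(\mathbf{A}) \in \ZZ[i] \setminus \{0\}$, and therefore $|P(\mathbf{A})| \ge 1$.

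The final step turns this nonvanishing into a capacity lower bound. Parametrize $X = h^\dagger h$ with $h \in \sL_N(\CC)$ up to scalar, so that $\capacity T = \inf_{h \in \sL_N(\CC)} \det T(h^\dagger h)$. Cauchy--Binet expresses $\det T(h^\dagger h) = \det \sum_i (A_i h)(A_i h)^\dagger$ as a sum of squared $N \times N$ minors of the $N \times NR$ block matrix $[A_1 h \mid \cdots \mid A_R h]$. On the other hand, $|P(\mathbf{A})|^{2/\deg P}$ is invariant under $h \in \sL_N$, so bounding each minor in $|P|^{1/\deg P}$ by the operator norm of the block matrix (a Hadamard-type estimate) and then in terms of $\det T(h^\dagger h)^{1/(2N)}$, and finally taking the infimum over $h$, yields a bound of the form $\capacity T \ge |P(\mathbf{A})|^{2/N} / C$ where $C \le (R N^{4})^{N}$. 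Combined with $|P(\mathbf{A})| \ge 1$ this gives exactly $\capacity T \ge e^{-N \log(R N^4)}$.

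The main obstacle is the degree bound in the middle step; prior work gave only exponential-in-$N$ bounds, which would not suffice. The polynomial bound of Derksen--Makam, whose proof involves a careful study of the shift/orthogonality structure of matrix semi-invariants, is the decisive technical input and the reason this capacity estimate is strong enough to drive polynomial-time scaling algorithms. The remaining ingredients (the null cone dictionary, integrality, and the Cauchy--Binet estimate) are comparatively routine.
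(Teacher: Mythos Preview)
Your overall strategy is correct and mirrors the GGOW16 argument that the paper invokes: (i) rank-nondecreasing $\iff$ outside the null cone of the left--right action, (ii) hence some nonzero homogeneous semi-invariant $P$ does not vanish on $\mathbf A$, (iii) integrality of $P$ in the coefficient basis plus Gaussian-integer entries of the $A_i$ force $|P(\mathbf A)|\ge 1$, and (iv) a Cauchy--Binet/Hadamard estimate converts this into a lower bound on $\capacity T$.

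Note, however, that the paper does \emph{not} give its own proof of this statement. It cites the result from \cite{GGOW16} and remarks that the stated bound $e^{-N\log(RN^4)}$ (improving the original $e^{-O(N^2\log(RN^4))}$ in \cite{GGOW16}) follows by substituting the Derksen--Makam bound $\sigma(N,R)\le N-1$ (Theorem~\ref{thm:derksen}) into the GGOW16 argument. So there is nothing to compare beyond the high-level sketch you have given, which matches.

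One inaccuracy worth flagging: you state the Derksen--Makam result as ``generators of degree at most $N^4$.'' In the paper's formulation the relevant quantity is the block size $\sigma(N,R)$ of Definition~\ref{dfn:derksen_polies}, and \cite{DM17} gives $\sigma(N,R)\le N-1$; the corresponding semi-invariant $\det\bigl(\sum_i A_i\otimes B_i\bigr)$ with $B_i\in\Mat_{(N-1)\times(N-1)}$ then has degree $N(N-1)<N^2$ in the $A_i$, not $N^4$. The $N^4$ in the final bound arises from the coefficient/monomial counting in step~(iv), not from the degree bound itself. Your final step is also stated too loosely to verify the exact constant $(RN^4)^N$; the computation in \cite{GGOW16} is more delicate, but the shape of your argument is right.
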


This is implicit in the proof of Theorem 2.21 in \cite{GGOW16}, which gives the bound $\exp(- O(N^2 \log (R N^4))$. One of the bounds used there has since been improved; we discuss this degree bound, since we will use it later.

\begin{definition}\label{dfn:derksen_polies} Let $T:\Mat_{N\times N}\CC \to \Mat_{N\times N}\CC$ be a completely positive map with Kraus operators $A_1 \dots A_R$. The quantity $\sigma(N,R)$ is defined to be the minimal $d$ such that $T$ is rank-nondecreasing if and only if the polynomial $p:\Mat_{d\times d}(\CC)^R \to \CC$ given by $$p(B_1, \dots, B_R) =\det\left(\sum_{i = 1}^{R} A_i \otimes B_{i}\right)$$
is not identically zero.
\end{definition}
It is interesting that $\sigma(N,R)$ even exists. The bound $\sigma(N,R) \leq (N+1)!$ was used in \cite{GGOW16}, but a better bound appeared afterwards.
\begin{theorem}[Derksen, Makam \cite{DM17}]\label{thm:derksen}
$\sigma(N,R) \leq N-1$. 
\end{theorem}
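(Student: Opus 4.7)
The plan is to prove the contrapositive of the nontrivial direction: if the polynomial $p(B_1, \ldots, B_R) = \det(\sum_i A_i \otimes B_i)$ vanishes identically on $\Mat_{d\times d}(\CC)^R$ for all $d \leq N-1$, then the tuple $(A_1, \ldots, A_R)$ is rank-decreasing, i.e., there is a subspace $W \subset \CC^N$ with $\dim(\sum_i A_i W) < \dim W$. The easy direction (that nonvanishing for some $d$ implies rank-nondecreasing) is a block-matrix observation: a shrunk subspace $W$ produces an invariant subspace $W \otimes \CC^d$ of $\sum_i A_i \otimes B_i$ whose image lies in a space of dimension $d \cdot \dim(\sum_i A_i W) < d \dim W$, so the operator cannot be invertible.

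For the main direction, I would work inside the invariant theory of the left-right action of $\SL_N \times \SL_N$ on $\Mat_{N \times N}(\CC)^R$ via $(g,h)\cdot(A_1,\dots,A_R) = (g A_1 h^{-1},\dots,g A_R h^{-1})$. A tuple is rank-decreasing precisely when it lies in the null cone of this action, and by Hilbert--Mumford the null cone is exactly the common vanishing locus of all nonconstant homogeneous semi-invariants. By the Derksen--Weyman / Domokos--Zubkov description, every semi-invariant is a polynomial in the coefficients of $\det(\sum_i A_i \otimes B_i)$ as $(B_1,\dots,B_R)$ ranges over matrix tuples of varying sizes $d$, and the degree in the $A$-entries is $Nd$. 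So the statement $\sigma(N,R) \leq N-1$ is equivalent to the assertion that the semi-invariant ring is generated by its elements of degree at most $N(N-1)$, i.e.\ by $\det(\sum_i A_i \otimes B_i)$ for $d \leq N-1$.

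The main obstacle is this tight degree bound on generators: naive arguments (e.g.\ via Cayley--Hamilton applied to $\det(\sum_i A_i \otimes B_i)$, or via general bounds on degrees of invariants of reductive groups) give only bounds like $(N+1)!$. To sharpen this to $N-1$, I would try an inductive approach on $N$ using a ``shrinking'' reduction: given a hypothetical rank-nondecreasing tuple and a witness $(B_1,\dots,B_R)$ of minimal size $d^* \geq N$, show that one can pass to a nontrivial quotient or invariant subspace of $\CC^{d^*}$ compatible with all the $B_i$ — obtaining a smaller witness and contradicting minimality. Equivalently, one can attempt a regularity-lemma style argument on the symbolic matrix $L = \sum_i A_i x_i$ in noncommuting variables, using that if $\mathrm{ncrk}(L) = N$ then, after passing to a common invariant complement, the ``second fundamental form'' governing how subspaces fail to be shrunk already realizes full rank with $(N-1)$-dimensional matrix substitutions.

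I would expect the sharp constant $N-1$ (rather than $N$ or $N+1$) to be the real sticking point: it requires ruling out a boundary case where the minimal $d^*$ equals $N$, typically via a careful examination of the extremal configurations (e.g.\ semi-stable but not stable tuples), and matching them against a dimension count on the Grassmannian of potential shrunk subspaces. Once this is accomplished, the theorem follows by unwinding the equivalence between nonvanishing of $p$ on $\Mat_{d\times d}(\CC)^R$ and non-membership of the tuple in the null cone.
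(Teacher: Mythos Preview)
This theorem is not proved in the paper at all: it is quoted as an external result of Derksen and Makam \cite{DM17} and then used as a black box (specifically, to bound the degrees of the polynomials defining the variety $V(T,P,Q)$ in Lemma~\ref{lem:exists_iff_generic_stoc} and in the capacity lower bound of Theorem~\ref{thm:stoc_lower_bd}). So there is no ``paper's own proof'' to compare against, and you should not be attempting to supply one here; you should simply cite \cite{DM17}.

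That said, your sketch is not a proof even on its own terms. You correctly set up the invariant-theoretic framework (left--right action, null cone, the Derksen--Weyman/Domokos--Zubkov description of semi-invariants, and the equivalence between rank-nondecreasingness and nonvanishing of $\det(\sum_i A_i\otimes B_i)$ for some $d$). But everything after ``To sharpen this to $N-1$'' is a wish list, not an argument: the phrases ``I would try an inductive approach,'' ``one can attempt a regularity-lemma style argument,'' and ``I would expect the sharp constant \dots\ to be the real sticking point'' are explicit admissions that the key step is missing. The actual Derksen--Makam argument is not the inductive shrinking-of-$d$ you describe; it proceeds via a concavity/blow-up analysis of the tensor rank of the structure tensor of matrix multiplication together with a reduction on the matrix-space side, and getting the constant down from exponential to $N-1$ is genuinely delicate. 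Your outline does not supply the mechanism that forces $d^* \leq N-1$, and the ``second fundamental form'' heuristic you mention is not, as stated, a known route to this bound.

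In short: the paper treats this as a citation, and your proposal should too. If you want to include a self-contained argument, you would need to actually carry out the Derksen--Makam proof (or an alternative such as the one in Ivanyos--Qiao--Subrahmanyam), not just gesture at its shape.
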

We now prove our lower bound. 
\begin{theorem}\label{thm:stoc_lower_bd}
Suppose $T, P, Q$ are of bit-complexity at most $b$ as per Definition \ref{dfn:bit_complexity}. If $\capacity(T, P, Q)> 0$, then
$$\capacity(T, P, Q) \geq \exp( - 10 b).$$
\end{theorem}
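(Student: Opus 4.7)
The plan is to reduce to the doubly-stochastic capacity bound of Theorem \ref{thm:cap_lower_bd} via the reduction $\trun$ of Theorem \ref{thm:stoc_reds}, applied to integer-scaled marginals, and then to undo the integer rescaling by taking a $D$-th root. Since the target bound $\exp(-10b)$ is linear in $b$ and the common integer denominator $D$ of $\mathbf{p},\mathbf{q}$ can be as large as $2^b$, there is enough slack: a bound of the form $\exp(-D\cdot\poly(b))$ on $\capacity \trun$ becomes $\exp(-\poly(b))$ on $\capacity(T,P,Q)$ after a $D$-th root.

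First I would clear denominators on the marginals. Since $\mathbf{p}$ and $\mathbf{q}$ have bit-complexity at most $b$, there is a common positive integer denominator $D\leq 2^b$ for which $\tilde P := DP$ and $\tilde Q := DQ$ are diagonal matrices with positive integer entries summing to $D$. A short manipulation of the relative determinant, using $\Delta(D q_j) = D\Delta q_j$, $\det\eta_j(DY)\eta_j^\dagger = D^j \det\eta_j Y\eta_j^\dagger$, and $\sum_j j\Delta q_j = \Tr Q = 1$, yields the scaling identity
$$\capacity(T,\tilde P,\tilde Q) \;=\; D^D\,\capacity(T,P,Q)^D.$$
Since $\capacity(T,P,Q)>0$, Theorem \ref{thm:apx_delta_scalable} gives that $T$ is $(P,Q)$-rank-nondecreasing, so by Theorem \ref{thm:stoc_reds} items 3 and 4 we have $\capacity(T,\tilde P,\tilde Q) = \capacity\trun_{\tilde P,\tilde Q} T$ with $\trun_{\tilde P,\tilde Q} T$ rank-nondecreasing.

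To apply Theorem \ref{thm:cap_lower_bd} I now clear denominators on the Kraus operators. The entries of $A_1,\dots,A_r$ have bit-complexity at most $b$, so the rescaled matrices $2^b A_i$ have Gaussian integer entries, and the associated completely positive map is $\tilde T = 2^{2b} T$. By linearity of the reduction, $\trun_{\tilde P,\tilde Q}\tilde T = 2^{2b}\trun_{\tilde P,\tilde Q} T$; and by Proposition \ref{prp:reduction_kraus}, $\trun_{\tilde P,\tilde Q}\tilde T$ has Gaussian integer Kraus operators, at most $R' := r\tilde p_1\tilde q_1 \leq rD^2$ of them. Theorem \ref{thm:cap_lower_bd} therefore gives $\capacity\trun_{\tilde P,\tilde Q}\tilde T \geq \exp(-D\log(R'D^4))$. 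Combining with the $2^{2b}$ rescaling and the scaling identity above yields
$$D^D\,\capacity(T,P,Q)^D \;=\; \capacity(T,\tilde P,\tilde Q) \;\geq\; 2^{-2bD}\exp\!\bigl(-D\log(R'D^4)\bigr).$$

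Taking $D$-th roots,
$$\capacity(T,P,Q) \;\geq\; D^{-1}\cdot 2^{-2b}\cdot(R'D^4)^{-1} \;\geq\; \bigl(r\,D^7\cdot 2^{2b}\bigr)^{-1}.$$
Plugging in $r,D\leq 2^b$ (from the definition of bit-complexity), the right-hand side is at least $2^{-10b}$, and since $\ln 2 < 1$ this is at least $\exp(-10b)$. The main subtlety is that two independent rescalings are needed — multiplying the Kraus operators by $2^b$ to make them Gaussian integers and multiplying the marginals by $D$ to make them integer — together with the bookkeeping of how these rescalings propagate through $\trun$ and $\capacity$. Under the hood, the Derksen--Makam degree bound $\sigma(N,R)\leq N-1$ built into Theorem \ref{thm:cap_lower_bd} is what keeps the exponent of $D$ in the integer-marginal bound only linear in $D$, so that the final $D$-th root yields the desired $\exp(-O(b))$.
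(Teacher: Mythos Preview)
Your proof is correct and follows essentially the same approach as the paper: scale the marginals to integers, scale the Kraus operators to Gaussian integers, apply the reduction $\trun$, invoke Theorem \ref{thm:cap_lower_bd}, and take a $D$-th root to undo the marginal rescaling. The only cosmetic difference is that the paper uses a single integer $N\le 2^b$ to clear denominators in both the marginals and the Kraus operators simultaneously, whereas you separate these into $D$ and $2^b$; the resulting arithmetic is equivalent and yields the same bound.
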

\begin{proof} Choose an integer $N \leq 2^{b}$ such that $\mathbf{p}' = N \mathbf{p}$ and $\mathbf{q}' = N \mathbf{q}$ have integer entries and such that $N^2 T$ has Gaussian integer entries. First note that if $T'$ is the completely positive map obtained by scaling the Kraus operators of $T$ by $N$, then the Kraus operators of $T'$ have Gaussian integer entries and $\capacity(T', \mathbf p, \mathbf q) = N^2\capacity(T,\mathbf p,\mathbf q).$ By Proposition \ref{prp:reduction_kraus},  $\trun_{N P,N Q} T'$ has $N^2 p_1q_1r \leq N^2 r$ many $N \times N$ Kraus operators filled with Gaussian integer entries. Further, $\capacity(T, P, Q)>0$ then $\trun_{N P, N Q} T'$ is rank-nondecreasing. By Theorem \ref{thm:cap_lower_bd}, 
$$\capacity \trun_{N P,N Q} T' > e^{N \log (N^6r)}.$$
However, it's not hard to check that 
$$\capacity(T', P, Q) = \frac{1}{N}\capacity(T', NP, NQ)^{1/N}, $$
and we know $\capacity \trun_{N P,N Q} T' = \capacity(T', NP, NQ)$. Therefore 
$$ \capacity (T, P, Q) \geq \frac{1}{N^3} e^{ - \log (N^6r)}\geq e^{-9\log(Nr)}.$$
Using $N \leq 2^b$ and $b \geq \log_2 r$ completes the proof. \end{proof}

\indent We also need to ensure that the capacity does not decrease too much after the first step of Algorithm \ref{alg:alg_g_tri}. The proof is straightforward and is in Appendix \ref{app:missing_proofs}.
\begin{lemma}\label{lem:scaled_capacity_lower_bound} Let $T, P, Q$ have bit-complexity at most $b$ and\\ $\capacity(T, P, Q) > 0$. If $T_1$ is the operator obtained from the first step of Algorithm \ref{alg:alg_g_tri} applied to $T, P, Q$, then 
$$ \capacity(T_1, P, Q) \geq e^{- 14 b m}.$$ 
\end{lemma}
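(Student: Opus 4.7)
The plan is to track explicitly how $\capacity(T, P, Q)$ transforms under the first iteration of Algorithm \ref{alg:alg_g_tri} and then combine this with the capacity lower bound of Theorem \ref{thm:stoc_lower_bd}. Since the initialization is $(g_0, h_0) = (I_m, I_n)$ and the first step (with $j=1$ odd) finds $g \in \GL(E_\bullet)$ with $g^\dagger T(P) g = I_m$, we have $T_1 = T_{g, I_n}$. Such a $g$ exists by Cholesky decomposition, using Lemma \ref{lem:dual_inv} to guarantee $T(P) \succ 0$. Applying Lemma \ref{lem:cap_evol} with $h = I_n$ and noting $\det(P, I_n) = 1$, this yields
\[
\capacity(T_1, P, Q) = \det(Q, g^\dagger g) \cdot \capacity(T, P, Q),
\]
so it suffices to lower bound both factors.

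To handle the first factor, I would use Eq. \ref{twoside_mult} of Lemma \ref{detpx_facts} applied to the defining equation $g^\dagger T(P) g = I_m$: this gives $1 = \det(Q, I_m) = \det(Q, g^\dagger g)\, \det(Q, T(P))$, so $\det(Q, g^\dagger g) = \det(Q, T(P))^{-1}$. It then remains to upper bound $\det(Q, T(P))$ in terms of $b$. Each principal submatrix $\eta_i T(P) \eta_i^\dagger$ is positive semidefinite, so Hadamard's inequality gives $\det \eta_i T(P) \eta_i^\dagger \leq \prod_{j=1}^i T(P)_{jj}$. Each diagonal entry satisfies $T(P)_{jj} = \sum_k \langle A_k^\dagger e_j, P A_k^\dagger e_j\rangle \leq \|P\| \sum_k \|A_k^\dagger e_j\|^2$, which is $2^{O(b)}$ since $\|P\| \leq \Tr P = 1$ and the bit-complexity bounds give $|A_{k,ja}| \leq 2^b$ and $r n \leq 2^{2b}$. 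Hence $\log \det \eta_i T(P) \eta_i^\dagger \leq O(b i)$, and weighting by $\Delta q_i$ together with the telescoping identity $\sum_i i\, \Delta q_i = \sum_i q_i = \Tr Q = 1$ yields $\log \det(Q, T(P)) \leq O(b)$.

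Combining with $\capacity(T, P, Q) \geq e^{-10 b}$ from Theorem \ref{thm:stoc_lower_bd}, we obtain $\capacity(T_1, P, Q) \geq e^{-O(b)}$, which is comfortably stronger than the claimed $e^{-14 b m}$ bound, so there is ample slack and no need to chase constants. The only mildly delicate point is the balancing of the $i$-factor from Hadamard against the $\Delta q_i$ weights via telescoping, which is what prevents an extra factor of $m$ from appearing. Everything else reduces to the clean identity $\det(Q, g^\dagger g) = \det(Q, T(P))^{-1}$, which converts the implicitly-defined Cholesky factor $g$ into the explicit quantity $T(P)$ whose bit-complexity we control directly.
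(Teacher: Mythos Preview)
Your proof is correct and follows the same skeleton as the paper's: apply Lemma~\ref{lem:cap_evol} to get $\capacity(T_1,P,Q)=\det(Q,g^\dagger g)\,\capacity(T,P,Q)$, use Eq.~\ref{twoside_mult} to rewrite $\det(Q,g^\dagger g)=\det(Q,T(P))^{-1}$, bound $\det(Q,T(P))$ via the bit-complexity of $T(P)$, and finish with Theorem~\ref{thm:stoc_lower_bd}. The one genuine difference is in how you bound $\det(Q,T(P))$: the paper simply uses the operator inequality $T(P)\preceq mn\,2^{3b}I$ and accepts the crude estimate $\det(Q,T(P))\leq 2^{5bm}$, whereas you apply Hadamard's inequality together with the telescoping identity $\sum_i i\,\Delta q_i=\Tr Q=1$ to get $\log\det(Q,T(P))=O(b)$, saving a factor of $m$ in the exponent. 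Your bound is therefore sharper, but since the target is only $e^{-14bm}$, both arguments have plenty of slack.
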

Now we can just plug the bound from Lemma \ref{lem:scaled_capacity_lower_bound} into Theorem \ref{thm:alg_tri_term_stoc}.
\begin{corollary}\label{cor:tri_quant_term}
Let $T, P, Q$ have bit-complexity at most $b$ and\\ $\capacity(T, P, Q) > 0$. Then Algorithm \ref{alg:alg_g_tri} does not output ERROR if
$$TIME  \geq \frac{100 bm }{\min\{\epsilon, p_n\} + \min\{\epsilon, q_m\}}.$$ 
\end{corollary}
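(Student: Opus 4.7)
The plan is to simply chain Theorem \ref{thm:alg_tri_term_stoc} with the capacity lower bound from Lemma \ref{lem:scaled_capacity_lower_bound}. Theorem \ref{thm:alg_tri_term_stoc} already asserts that if $\capacity(T,P,Q) > 0$, then Algorithm \ref{alg:alg_g_tri} terminates without outputting ERROR provided
\[
TIME \geq \frac{-7 \log \capacity(T_1, P, Q)}{\min\{\epsilon, p_n\} + \min\{\epsilon, q_m\}},
\]
where $T_1$ is the operator produced after the first scaling step. So the only thing left to do is replace $-\log \capacity(T_1, P, Q)$ by an explicit quantity in $b$ and $m$.

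By Lemma \ref{lem:scaled_capacity_lower_bound}, our hypothesis that $T, P, Q$ have bit-complexity at most $b$ and $\capacity(T,P,Q) > 0$ gives
\[
\capacity(T_1, P, Q) \geq e^{-14 b m},
\]
hence $-7 \log \capacity(T_1, P, Q) \leq 98\, b m \leq 100\, b m$. Plugging this into the bound from Theorem \ref{thm:alg_tri_term_stoc} shows that any
\[
TIME \geq \frac{100\, b m}{\min\{\epsilon, p_n\} + \min\{\epsilon, q_m\}}
\]
suffices to guarantee that Algorithm \ref{alg:alg_g_tri} does not return ERROR.

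There is no real obstacle here — the corollary is a direct substitution. The substantive work has been done upstream: Theorem \ref{thm:alg_tri_term_stoc} (via the capacity-update and Jensen's inequality arguments in Lemmas \ref{lem:cap_evol} and \ref{lem:cap_incr}) controls the iteration count in terms of the initial capacity, and Lemma \ref{lem:scaled_capacity_lower_bound} (which in turn rests on Theorem \ref{thm:stoc_lower_bd} via Derksen–Makam's degree bound, Theorem \ref{thm:derksen}) controls how much the initial random-free scaling step can depress the capacity. The only mild care needed is to verify the numerical constant: since $7 \cdot 14 = 98 \leq 100$, the stated bound is valid, with a little slack left over.
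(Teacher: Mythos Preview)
Your proposal is correct and follows exactly the paper's approach: the paper simply states ``Now we can just plug the bound from Lemma \ref{lem:scaled_capacity_lower_bound} into Theorem \ref{thm:alg_tri_term_stoc},'' and your write-up carries out precisely that substitution, including the numerical check $7\cdot 14 = 98 \le 100$.
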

The bound in Theorem \ref{thm:stoc_lower_bd} can be improved to be dependent on only the bit-complexity of $A_1, \dots, A_r$ by using Proposition \ref{prp:log_concave} - any $\log$-concave function on a convex polytope takes its minimum on a vertex. The extreme points of $\cK(T, E_\bullet, F_\bullet)$ are rational and have worst-case bit-complexity depending polynomially on $m$ and $n$; one runs the proof of Theorem \ref{thm:stoc_lower_bd} on the vertex where the minimum is attained. A further improvement can be made by assuming the number of Kraus operators is bounded by $nm$, which is without loss of generality, though we omit the proof.
%We omit the calculations, but this allows us to obtain a bound 
%$\capacity(T, P, Q) > 0$ implies $\capacity(T, P, Q)$ that depends only on 

%%%
%%%
%%%
\section{General linear scalings}\label{sec:general_scalings}
%%%
%%%
%%%
The proof of Theorem \ref{thm:gln_scalable} is quite simple once we have the following lemma, which essentially says that if there are $B$ and $C$ such that $T_{B, C}$ is be $(P,Q)$-rank-nondecreasing, then $T_{B,C}$ is $(P,Q)$-rank-nondecreasing for generic $B,C$.
\begin{definition}\label{dfn:affine_variety}
The set of common zeroes of a collection of polynomials in $\CC[x_1, \dots, x_n]$ is called an \emph{affine variety} in $\CC^d$. We say a property holds for \emph{generic} $x \in S$ if it holds for all $x$ in $S \setminus V$ for some fixed affine variety $V$ not containing $S$.
\end{definition}

\begin{lemma}\label{lem:exists_iff_generic_stoc} The set of pairs $(B^\dagger, C) \in \Mat_{m\times m}\CC \times \Mat_{n\times n}\CC$ such that $T_{B, C}$ is $(P,Q)$-rank-nondecreasing is the complement of an affine variety $V(T, P, Q)$. Further, if $P$ and $Q$ have integral spectra, then $V(T, P, Q)$ is generated by finitely many polynomials of degree at most $2(\Tr P)^2$.\end{lemma}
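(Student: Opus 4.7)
The plan is to translate failure of $(P,Q)$-rank-nondecreasingness into explicit polynomial vanishing on $(B^\dagger, C)$. By Remark \ref{rem:schubert}, $T_{B,C}$ fails to be $(P,Q)$-rank-nondecreasing iff there exist ``bad'' subsets $I \subset [m]$ and $J \subset [n]$ with $\sum_{i \in I} q_i + \sum_{j \in J} p_j > \Tr P$ admitting a $T_{B,C}$-independent pair $(L,R) \in \Omega_I^\bullet(E_\bullet) \times \Omega_J^\bullet(F_\bullet)$. Since the Kraus operators of $T_{B,C}$ are $B^\dagger A_i C$, a pair $(L, R)$ is $T_{B,C}$-independent iff $(BL, CR)$ is $T$-independent. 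Therefore
$$V(T,P,Q) = \bigcup_{(I,J)\text{ bad}} \pi_{I,J}\!\left(X_{I,J}\right), \quad X_{I,J} := \{(B^\dagger, C, L, R) : (L,R) \in \Omega_I^\bullet \times \Omega_J^\bullet,\ (BL, CR)\text{ is } T\text{-independent}\},$$
where $\pi_{I,J}$ projects onto the $\Mat_{m\times m}\CC \times \Mat_{n\times n}\CC$ factor, and the union is over the finite set of bad pairs.

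For the closedness in general, the $T$-independence condition $\pi_{BL} T(\pi_{CR}) = 0$ translates to polynomial equations in the Plücker coordinates of $BL$ and $CR$, which in turn depend polynomially on $(B^\dagger, C, L, R)$. Combined with the Schubert conditions defining $\Omega_I^\bullet \times \Omega_J^\bullet$, this cuts out each $X_{I,J}$ as a Zariski closed subset of $\Mat_{m\times m}\CC \times \Mat_{n\times n}\CC \times \Omega_I^\bullet \times \Omega_J^\bullet$. Since $\Omega_I^\bullet \times \Omega_J^\bullet$ is projective, the main theorem of elimination theory (projectivity implies proper, so images of closed sets are closed) shows each $\pi_{I,J}(X_{I,J})$ is closed in $\Mat_{m\times m}\CC \times \Mat_{n\times n}\CC$. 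A finite union of closed sets is closed, so $V(T,P,Q)$ is an affine variety.

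For the degree bound when $\mathbf{p}, \mathbf{q}$ are integral, I would instead exploit the reduction of Section \ref{sec:reduction}: Item 4 of Theorem \ref{thm:stoc_reds} says $T_{B,C}$ is $(P,Q)$-rank-nondecreasing iff $\trun_{P,Q} T_{B,C}$ is rank-nondecreasing. Let $N = \Tr P$ and $d = \sigma(N, R') \leq N - 1$ by Theorem \ref{thm:derksen}, where $R'$ is the number of Kraus operators of the reduction. By Definition \ref{dfn:derksen_polies}, $\trun_{P,Q} T_{B,C}$ is rank-nondecreasing iff
$$p(B^\dagger, C, Y) := \det\!\left(\sum_i \tilde A_i(B^\dagger, C) \otimes Y_i\right)$$
is nonzero as a polynomial in the $d \times d$ matrix variables $Y_i$, where $\tilde A_i$ are the Kraus operators of $\trun_{P,Q} T_{B,C}$. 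By Proposition \ref{prp:reduction_kraus}, each entry of $\tilde A_i(B^\dagger, C)$ is bilinear in $(B^\dagger, C)$, so the $Nd \times Nd$ determinant has degree at most $2Nd \leq 2N(N-1) \leq 2(\Tr P)^2$ in $(B^\dagger, C)$. Expanding $p = \sum_\alpha p_\alpha(B^\dagger, C) Y^\alpha$, the set $V(T,P,Q)$ is the common zero locus of the polynomials $p_\alpha$, each of degree at most $2(\Tr P)^2$. The main obstacle is obtaining precisely this quadratic degree bound: it hinges critically on the Derksen-Makam improvement $\sigma(N, R') \leq N - 1$, since the classical bound $(N+1)!$ would have been exponentially too weak.
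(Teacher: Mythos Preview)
Your proposal is correct, and the integral-spectra degree bound via the reduction $\trun_{P,Q}$ and Derksen--Makam is exactly the paper's argument. For the general closedness statement, however, you take a genuinely different and cleaner route than the paper.

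The paper parameterizes $L$ and $R$ by full-rank matrices in $\Mat_{k\times m}\CC \times \Mat_{l\times n}\CC$, so that the incidence set $S(\mathbf{c},\mathbf{d})$ is only \emph{constructible} (full-rank being open, Schubert membership being closed). It then invokes Chevalley's theorem to conclude the projection is constructible, and finishes by arguing separately that the projection is closed in the Euclidean topology, hence Zariski closed. By contrast, you parameterize $(L,R)$ directly as points of the Schubert varieties $\Omega_I^\bullet \times \Omega_J^\bullet$ inside a product of Grassmannians; since these are projective, the projection to $\Mat_{m\times m}\CC \times \Mat_{n\times n}\CC$ is a closed map by elimination theory, and you are done in one stroke. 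Your approach avoids the detour through Chevalley and the Euclidean-closure argument. One small wording issue: the $T_{B,C}$-independence condition is best phrased as polynomial in the Pl\"ucker coordinates of $L,R$ and the entries of $(B^\dagger,C)$ directly (via $M_L^\dagger B^\dagger A_i C M_R = 0$ on affine charts, or as the vanishing of a section of a bundle), rather than passing through ``Pl\"ucker coordinates of $BL$ and $CR$,'' which are ill-defined when $B$ or $C$ drops the dimension. This does not affect the validity of your argument.
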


% % %
\subsection{Proof of Theorem \ref{thm:gln_scalable}}

We first show how to prove Theorem \ref{thm:gln_scalable}, then we give a hint towards proving Lemma \ref{lem:exists_iff_generic_stoc}.
\begin{proof}[Proof of Theorem \ref{thm:gln_scalable}]
%The implications \ref{gen_p_q}$\implies$\ref{gen_cap} and \ref{gen_cap}$\implies$\ref{gen_scal} follow immediately from Theorem \ref{thm:apx_delta_scalable}. The only hard work left is the implication \ref{gen_scal}$\implies $\ref{gen_p_q}. Proposition \ref{sure_rnd} shows $\ref{gen_scal}$ implies there exists $(g,h) \in \GL_m(\CC)\times \GL_n(\CC)$ such that $T_{g,h}$ is $(P,Q)$-rank-nondecreasing, but we need to establish that this implies $T_{g,h}$ is $(P,Q)$-rank-nondecreasing for \emph{generic} $(g,h) \in \gl(W)\times \gl(V)$.
We first prove \ref{gen_scal}$\implies $\ref{gen_p_q}. If $T$ is approximately $\GL_m(\CC)\times \GL_n(\CC)$-scalable to $(I_V \to  Q, I_W \to P)$, then by Corollary \ref{cor:sure_rnd}, there exists $(g,h) \in \GL_m(\CC)\times \GL_n(\CC)$ such that $T_{g,h}$ is $(P,Q)$-rank-nondecreasing. By Lemma \ref{lem:exists_iff_generic_stoc}, 
$$\{(g^\dagger ,h) \in \GL_m(\CC)\times \GL_n(\CC): T_{g,h} \textrm{ is }(P,Q)\textrm{-rank-nondecreasing}\}$$
is nonempty and the complement of an affine variety.
%in a complex vector space isomorphic to $\oplus_i \Mat_{n_i}(\C)\times  \oplus_j\Mat_{m_j}(\C)$ (this is from our assumption that $(G, H, P, Q, T)$ is block-diagonal, and 

%$G\times H$ itself is the complement of zeroes of the polynomial $(x, y) \mapsto \det(x)\det(y)$. 
This shows $T_{g,h}$ is $(P,Q)$-rank-nondecreasing for generic $(g^\dagger, h) \in \GL_m(\CC)\times \GL_n(\CC)$. \ref{gen_p_q}$\implies$\ref{gen_cap} follows from Theorem \ref{thm:apx_delta_scalable}. Next we show \ref{gen_cap}$\implies$\ref{gen_scal}. Suppose $\capacity(T_{g,h}, P, Q) > 0$ for generic $(g^\dagger, h) \in \GL_m(\CC)\times \GL_n(\CC)$. In particular, there exists $(g,h) \in \GL_m(\CC)\times \GL_n(\CC)$ such that $\capacity(T_{g,h}, P, Q) > 0$. By Theorem \ref{thm:apx_delta_scalable}, $T_{g,h}$ is approximately $\GL(E_\bullet)\times \GL(F_\bullet)$-scalable to $(I_n \to  Q, I_m \to P)$, so $T$ is approximately $g\GL(E_\bullet)\times h\GL(F_\bullet)$-scalable to $(I_n \to  Q, I_m \to P)$. Because $\GL(E_\bullet)\times \GL(F_\bullet) \subset \GL_m(\CC)\times \GL_n(\CC)$, $T$ is approximately $\GL_m(\CC)\times \GL_n(\CC)$-scalable to $(I_n \to  Q, I_m \to P)$.
\end{proof}
%Though we do not have space for the full proof of Lemma , we outline it here.
We now prove Lemma \ref{lem:exists_iff_generic_stoc}.
\begin{proof}
If $\mathbf{p}$ and $\mathbf{q}$ are rational, the polynomials are those in Definition \ref{dfn:derksen_polies} computed from the Kraus operators of $\trun_{P,Q} T$ after scaling $\mathbf{p}$ and $\mathbf{q}$ to have integral spectra. Theorem \ref{thm:derksen} and Proposition \ref{prp:reduction_kraus} implies the degree bound. If $\mathbf{p}$ or $\mathbf{q}$ need not be rational, we must be more careful.\\
\indent First one shows that for any fixed pair $\mathbf{c} \in [l]^n, \mathbf{d} \in [k]^m$ of sequences of nonnegative integers, the set of tuples $(B^\dagger, C, L^\dagger, R)$ in
$$\Mat_{m\times m}\CC \times \Mat_{n\times n}\CC \times \Mat_{k\times m}\CC \times \Mat_{m \times l} \CC$$ such that $\row L, \row R$ is $T_{B,C}$-independent, and $L$ and $R$ are full-rank and satisfy 
\begin{align}\dim \row L \cap E_i \geq d_i \textrm{ and }\dim \row R \cap F_i \geq d_i \label{eq:double_schubert}
\end{align} is an \emph{constructible set} $S(\mathbf{c}, \mathbf{d})$, namely it is a union of sets of the form $V\setminus E$ where $E$ and $V$ are affine varieties. This is true because $T_{B,C}$ independence of $\row L, \row R$ is equivalent to the vanishing of the (polynomial) entries of $L^\dagger B^\dagger A_i C R$ for all $i \in [r]$, and the set of $L, R$ satisfying \ref{eq:double_schubert} is precisely a pair of matrices whose row spaces belong to certain Schubert varieties as in Remark \ref{rem:schubert}. Such sets of matrices are constructible because membership of $\row L$ in the Schubert variety corresponding to $\textbf d$ is equivalent to the nonvanishing of at least one $k \times k$ minor of $L$ and the vanishing of a certain subset (depending on $\textbf d$) of the $k\times k$ minors of $L$ \cite{Le10}.

% \footnote{The set of dimension $k$ subspaces $K\subset \CC^m$ satisfying $\dim K \cap E_i \geq d_i$ is called a \emph{Schubert variety}.} \\
\indent Next, consider the map $\pi:(B^\dagger, C, L^\dagger, R) \to (B^\dagger, C)$. Chevalley's theorem (see \cite{M99}) says $S'(\mathbf{c}, \mathbf{d}) :=\pi S(\mathbf{c}, \mathbf{d})$ is also constructible. However, it's also not hard to see that $S'(\mathbf{c}, \mathbf{d})$ is closed in the Euclidean topology, so it is in fact an affine variety (see \cite{MilneAG}). If we now take $D$ to be the set of $(\mathbf{c}, \mathbf{d})$ such that $\sum \Delta p_i c_i + \Delta q_i d_i > \Tr P$, then $V(T, P, Q) = \bigcup_{(\mathbf{c}, \mathbf{d}) \in D}S'(\mathbf{c}, \mathbf{d})$ is also an affine variety, and is precisely the set of $(B^\dagger, C)$ such that $T_{B,C}$ is \emph{not} $(P,Q)$-rank-nondecreasing. 
\end{proof}

The proof of Lemma \ref{lem:exists_iff_generic_stoc} also shows the following:
\begin{proposition}\label{prp:poly} The set
\begin{align*}\cK(T) = \{(\mathbf{p}, \mathbf{q}): \Tr P = \Tr Q = 1  \textrm{ and } T \textrm{ is approximately }\\
  (I_n \to Q, I_m \to P)\textrm{-scalable}\}
 \end{align*}
 is a convex polytope with rational vertices.
\end{proposition}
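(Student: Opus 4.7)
The plan is to combine Theorem \ref{thm:gln_scalable} with the variety construction from the proof of Lemma \ref{lem:exists_iff_generic_stoc} to express $\cK(T)$ as the solution set of finitely many rational linear inequalities.

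First, I would translate scalability into a Schubert statement. By Theorem \ref{thm:gln_scalable}, $(\mathbf{p}, \mathbf{q}) \in \cK(T)$ is equivalent to $T_{g,h}$ being $(P,Q)$-rank-nondecreasing for generic $(g^\dagger, h) \in \GL_m(\CC) \times \GL_n(\CC)$. By Remark \ref{rem:schubert}, this in turn says that for every pair $(I, J) \in 2^{[m]} \times 2^{[n]}$ such that $\Omega_I^\bullet(E_\bullet) \times \Omega_J^\bullet(F_\bullet)$ contains a $T_{g,h}$-independent pair, the inequality $\sum_{i \in I} q_i + \sum_{j \in J} p_j \leq 1$ must hold (normalizing by $\Tr P = 1$).

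Second, and crucially, the variety $S'(I, J) \subset \Mat_{m\times m}\CC \times \Mat_{n\times n}\CC$ of those $(g^\dagger, h)$ for which $\Omega_I^\bullet(E_\bullet) \times \Omega_J^\bullet(F_\bullet)$ contains a $T_{g,h}$-independent pair, constructed via Chevalley's theorem as in the proof of Lemma \ref{lem:exists_iff_generic_stoc}, depends only on $T, I, J$ and not on the spectra $(\mathbf{p}, \mathbf{q})$. Call $(I, J)$ \emph{always-bad} if $S'(I, J)$ is the entire ambient space, and let $\mathcal{B}(T) \subset 2^{[m]} \times 2^{[n]}$ be the (finite) collection of always-bad pairs. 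For every $(I, J) \notin \mathcal{B}(T)$, $S'(I, J)$ is a proper affine subvariety, and a finite union of proper affine subvarieties is still a proper subvariety, so generic $(g^\dagger, h)$ avoids all of them simultaneously.

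Third, I would assemble the polytope description. For generic $(g^\dagger, h)$ the pairs $(I, J)$ producing a $T_{g,h}$-independent witness in the Schubert product are exactly the always-bad ones; hence $(\mathbf{p}, \mathbf{q}) \in \cK(T)$ iff $\Tr P = \Tr Q = 1$ and $\sum_{i \in I} q_i + \sum_{j \in J} p_j \leq 1$ for every $(I, J) \in \mathcal{B}(T)$. This is a finite system of linear inequalities with $\{0,1\}$ coefficients together with two linear equalities with integer coefficients, so the resulting solution set is a polytope with rational vertices.

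The main step to check carefully is that the varieties $S'(I, J)$ genuinely depend only on $T, I, J$ — this is already visible in the proof of Lemma \ref{lem:exists_iff_generic_stoc}, where $V(T, P, Q)$ is assembled as a union of the $(\mathbf{p}, \mathbf{q})$-independent pieces $S'(\mathbf{c}, \mathbf{d})$ indexed by the $(\mathbf{p}, \mathbf{q})$-dependent set $D$. Once this is in hand the rest is bookkeeping: the ``only if'' direction of the polytope description holds because an always-bad $(I, J)$ witnesses failure of rank-nondecreasingness for \emph{every} $(g^\dagger, h)$, while the ``if'' direction follows from generic avoidance of the finitely many proper varieties $S'(I, J)$ with $(I, J) \notin \mathcal{B}(T)$.
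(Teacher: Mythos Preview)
Your proposal is correct and follows essentially the same approach as the paper's own proof. The paper defines $D'$ to be the set of dimension sequences $(\mathbf{c}, \mathbf{d})$ for which the variety $S'_{\mathbf{c}, \mathbf{d}}$ equals the whole ambient space, observes that the remaining $S'_{\mathbf{c}, \mathbf{d}}$ form a proper variety that generic $(g^\dagger, h)$ avoids, and concludes that $\cK(T)$ is cut out by the finitely many rational inequalities $\sum \Delta p_i c_i + \Delta q_i d_i \leq \Tr P$ for $(\mathbf{c}, \mathbf{d}) \in D'$ --- exactly your ``always-bad'' argument, with your subset indexing $(I,J)$ replacing the paper's dimension-sequence indexing $(\mathbf{c}, \mathbf{d})$ via the correspondence in Remark~\ref{rem:schubert}.
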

\begin{proof} 
%The same set of constraints of the form $\sum \Delta p_i c_i + \Delta q_i d_i$ 
Take $D'$ to be the set of $(\mathbf{c}, \mathbf{d})$ in the proof of Lemma \ref{lem:exists_iff_generic_stoc} such that $S'_{\mathbf{c}, \mathbf{d}}$ is all of $\Mat_{m\times m}\CC\times \Mat_{n\times n}\CC$. Since the union of the varieties $S'_{\mathbf{c}, \mathbf{d}}$ for $(\textbf c, \textbf d) \in ([l]^m \times [k]^n) \setminus D'$ is a proper affine variety, $D'$ are the constraints that generically arise for $\cK(T_{g, h}, E_\bullet, F_\bullet)$. By Theorem \ref{thm:gln_scalable}, $\cK(T)$ is precisely the convex body 
$$\left\{(\textbf{p}, \textbf{q}): \sum \Delta p_i c_i + \Delta q_i d_i \leq \Tr P \textrm{ for all } (\mathbf{c}, \mathbf{d}) \in D'\right\}.$$

%By Theorem \ref{thm:gln_scalable}, $\cK(T)$ is precisely the set of $(\vec{p}, \vec{q})$ contained in $\cK(T_{B, C}, E_\bullet, F_\bullet)$ for generic $(B, C)$. If $(\vec{p}, \vec{q})$ is ever contained in $\cK(T_{B, C}, E_\bullet, F_\bullet)$, it means some constraint $(\mathbf{c}, \mathbf{d})$ generically does not arise. Thus, $\cK(T)$ is the convex set satisfying the constraints that generically \emph{do} arise.
\end{proof}

% % %

% % %
\subsection{Correctness of Algorithm \ref{alg:informal_scaling}}
Here we show that Algorithm \ref{alg:informal_scaling} works in polynomially many steps. This essentially follows from the Schwarz-Zippel lemma combined with the Derksen's degree bound in Theorem \ref{thm:derksen}.
\begin{proposition}\label{prp:toy_works_app}
Suppose $T, P, Q$ have bit-complexity at most $b$ and that $T$ is approximately $\GL_m(\CC)\times \GL_n(\CC)$-scalable to $(I_m \to Q, I_n \to P)$. If $\epsilon < 1$ and
$$TIME \geq \frac{400 b m}{\min\{q_m, p_n\} \epsilon^2}$$ then Algorithm \ref{alg:informal_scaling} outputs ERROR with probability at most 1/3.
\end{proposition}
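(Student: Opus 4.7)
The plan is to argue that, with probability at least $2/3$, the random initial choice $(g_0,h_0)$ lands in the hypothesis of Theorem \ref{thm:apx_delta_scalable} as applied to $T_{g_0,h_0}$, after which step $2$ of Algorithm \ref{alg:informal_scaling} reduces to Algorithm \ref{alg:alg_g_tri} and Corollary \ref{cor:tri_quant_term} supplies the running-time bound.

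First, by Theorem \ref{thm:gln_scalable} the hypothesis that $T$ is approximately $\GL_m(\CC)\times\GL_n(\CC)$-scalable to $(I_n\to Q,\, I_m\to P)$ implies that the set of $(g^\dagger,h)$ making $T_{g,h}$ fail to be $(P,Q)$-rank-nondecreasing is a proper affine subvariety $V(T,P,Q)\subset\Mat_{m\times m}(\CC)\times\Mat_{n\times n}(\CC)$. After clearing the denominators of $\mathbf{p},\mathbf{q}$ to integers summing to $N$, Lemma \ref{lem:exists_iff_generic_stoc} tells us $V(T,P,Q)$ is cut out by polynomials of total degree at most $2N^2$. I would then apply the Schwarz--Zippel lemma to any such polynomial to bound
\[
\Pr\bigl[(g_0^\dagger,h_0)\in V(T,P,Q)\bigr]\ \leq\ \frac{2N^2}{6\cdot 2^b},
\]
and a parallel application to $\det g_0\cdot\det h_0$ (degrees $m,n\leq 2^b$) bounds the singularity probability by $(m+n)/(6\cdot 2^b)$. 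The key arithmetic input is that Definition \ref{dfn:bit_complexity} forces $N\leq 2^{b/\max(m,n)}$, since writing $n$ positive binary fractions summing to $1$ with common denominator $N$ already costs at least $n\log_2 N$ bits. When $\max(m,n)\geq 2$ this gives $2N^2\leq 2\cdot 2^b$; the corner case $m=n=1$ is trivial. A union bound then shows the good event occurs with probability at least $2/3$.

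Condition on that good event. Then $\capacity(T_{g_0,h_0},P,Q)>0$ by Theorem \ref{thm:apx_delta_scalable}, and $T_{g_0,h_0}$ has bit-complexity $b'=O(b)$ because $g_0,h_0$ have $O(b)$-bit integer entries. The iterates $(g_j,h_j)$ in step $2$ of Algorithm \ref{alg:informal_scaling} coincide with those produced by running Algorithm \ref{alg:alg_g_tri} on $T_{g_0,h_0}$: at each step the equation in the algorithm uniquely determines the new upper-triangular factor via Cholesky, so by induction the cumulative triangular scaling of $T_{g_0,h_0}$ is tracked correctly and the analysis of Section \ref{subsec:triangular_algorithm} applies verbatim. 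Applying Corollary \ref{cor:tri_quant_term} with $b$ replaced by $b'$ and target precision $\epsilon':=\epsilon^2\min\{p_n,q_m\}\leq\min\{p_n,q_m\}$ (using $\epsilon<1$) produces an iterate with $\ds_{P,Q}T_{g_j,h_j}\leq\epsilon'$ within $O(bm/(\epsilon^2\min\{p_n,q_m\}))$ iterations, and by the inequality in Definition \ref{dfn:ds_def} this $\ds$-bound upgrades to $T_{g_j,h_j}$ being an $\epsilon$-$(P\to I_m,Q\to I_n)$-scaling. The stated $TIME=400bm/(\min\{p_n,q_m\}\epsilon^2)$ absorbs all constants.

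The main obstacle is the Schwarz--Zippel step: the degree bound $2N^2$ of Lemma \ref{lem:exists_iff_generic_stoc} (which itself depends on the Derksen--Makam bound $\sigma(N,R)\leq N-1$) must be driven below the sample-set size $6\cdot 2^b$, and this works out only after one carefully uses that $\mathbf{p},\mathbf{q}$ are probability vectors of total bit-complexity at most $b$ to force $N\leq 2^{b/\max(m,n)}$. Everything else is either routine (the union bound, the bit-complexity accounting of $T_{g_0,h_0}$) or a direct invocation of results already proved.
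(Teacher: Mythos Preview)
Your overall plan mirrors the paper's proof exactly: Theorem~\ref{thm:gln_scalable} gives genericity of $(P,Q)$-rank-nondecreasingness, Lemma~\ref{lem:exists_iff_generic_stoc} supplies a degree bound on the bad variety, Schwarz--Zippel controls the failure probability of the random initialization, and Corollary~\ref{cor:tri_quant_term} (applied to $T_{g_0,h_0}$ with target precision $\epsilon' = \min\{p_n,q_m\}\epsilon^2$ via the inequality in Definition~\ref{dfn:ds_def}) yields the $TIME$ bound. The bit-complexity bookkeeping for $T_{g_0,h_0}$ is also handled as in the paper.

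The gap is your ``key arithmetic input'': the inequality $N \leq 2^{b/\max(m,n)}$ is false. The claim that ``writing $n$ positive binary fractions summing to $1$ with common denominator $N$ costs at least $n\log_2 N$ bits'' breaks because only those $p_i$ whose numerator over $N$ is odd require $\log_2 N$ fractional digits; the rest can be much shorter. Concretely, take $\mathbf{p} = (1/2,\,1/4,\,1/8,\,1/8)$: here $n=4$, the least common denominator is $N=8$, and the binary strings $0.1,\;0.01,\;0.001,\;0.001$ contribute $1+2+3+3 = 9$ digits in total, yet $2^{9/4} < 5 < 8 = N$. So your Schwarz--Zippel step, which needs $2N^2 \leq 6\cdot 2^b$, does not go through on the strength of this estimate.

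The paper does not attempt this sharpening. It uses only the trivial bound $\gamma \leq 2^b$ on the common denominator, which via Lemma~\ref{lem:exists_iff_generic_stoc} gives degree at most $2\cdot 2^{2b}$ for the defining polynomials of the bad variety, and then samples the entries of $(g_0,h_0)$ from $[6\cdot 2^{2b}]$ in the Schwarz--Zippel step. (The description of Algorithm~\ref{alg:informal_scaling} writes $[6\cdot 2^b]$, which is inconsistent with the proof; the proof's $[6\cdot 2^{2b}]$ is the choice that makes the probability bound $1/3$ valid.) Your attempt to reconcile the algorithm's stated $[6\cdot 2^b]$ with the degree bound by tightening the estimate on $N$ is a reasonable instinct, but the specific inequality you invoke does not hold.
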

\begin{proof}
Suppose $T$ is approximately $\GL_m(\CC)\times \GL_n(\CC)$-scalable to $(I_n \to Q, I_m \to P)$. By Theorem \ref{thm:gln_scalable}, $T_{g_0,h_0}$ is $(P,Q)$-rank-nondecreasing for generic $(g_0^\dagger, h_0)\in \GL_m(\CC)\times \GL_n(\CC)$. Since $P,Q$ have bit-complexity at most $b$, there is a number $\gamma \leq 2^{b}$ such that $\gamma P, \gamma Q$ have integral spectra. In particular, by Lemma \ref{lem:exists_iff_generic_stoc}, $$V' = V(T,P,Q) \cup \{B:\det B = 0\} \cup \{C:\det C = 0\}$$
is an affine algebraic variety in $\Mat_{m\times m}\CC \times \Mat_{n\times n} \CC$ generated by polynomials of degree at most $\max\{2\gamma^2, n,m\} \leq 2\cdot 2^{2b}$ that does not contain all of $\GL_m(\CC)\times \GL_n(\CC)$.\\
\indent There must be some polynomial $p:\Mat_{m\times m}\CC \times \Mat_{n\times n} \CC \to \CC$ of at degree at most $2 \cdot 2^{2b}$ that vanishes on $V'$ but not on all of $\Mat_{m\times m}\CC \times \Mat_{n\times n} (\CC)$. By the Schwarz-Zippel lemma, $p$ vanishes on our random choice of $(g_0,h_0) \in \Mat_{m\times m}\CC \times \Mat_{n\times n} \CC$ with entries in $6 \cdot 2^{2b}$ with probability at most 1/3. With probability at least 2/3 we have found $g_0,h_0$ such that $T_{g_0,h_0}$ is $(P,Q)$-rank-nondecreasing.\\
\indent The bit-complexity of the entries of $T_{g_0, h_0}$ is at most $ \log(m) + \log (n) + 3b \leq 4b$.  The rest of the algorithm is the same as Algorithm \ref{alg:alg_g_tri}, except we accept only when $T_{g_j, h_j}$ is an $\epsilon$-$(P \to I_m, Q \to I_n)$-scaling of $T$, rather than the less stringent requirement that $\ds_{P,Q} T_{g_j, h_j} < \epsilon$. However, as we remarked in Definiton \ref{dfn:ds_def}, if $\ds_{P,Q} T_{g_j, h_j} < \epsilon' = \min\{p_n,q_m\}\epsilon^2$ then $T_{g_j, h_j}$ is an $\epsilon$-scaling of $T$. This gives the required upper bound on $TIME$.
\end{proof}

\begin{remark}[Numerical issues]\label{rem:bit_complexity} We should not expect to be able to compute each step of Algorithm \ref{alg:alg_g_tri} exactly, but rather to polynomially many bits of precision. The previous version of this paper had a rather messy analysis of the rounding; \cite{BFGOWW18} contains a much more pleasant analysis, which we now sketch.\\
\indent In each iteration of Algorithm \ref{alg:alg_g_tri}, simply compute $g$ (resp. $h$) to some precision $2^{-t}$ (ensuring that they are upper triangular). We need to check that progress is still made per step and that the capacity is bounded above the entire time. For this we need to emulate the proof of Lemma \ref{lem:cap_incr} with some error; it is enough to show that the rounded $h'$ satisfies $1/\det(Q, h^{-\dagger} h^{-1}) \approx 1/\det(Q, h'^{-\dagger} h'^{-1}).$ Since $h^{-\dagger} h^{-1} = T^*(Q)$, it is enough to show that the least eigenvalues of $T^*(Q)$ (resp $T(P)$) stays bounded singly exponentially away from zero throughout. If this holds, the capacity is also bounded because $T(P) \approx I$ or $T^*(Q) \approx I$. Fortunately, $t$ can be chosen $\poly(TIME, p_n^{-1}, q_m^{-1}, m,n, b)$ such that the least eigenvalues do stay large enough throughout.

\end{remark}

%%%
%%%
%%%
\section{Extension and special cases}\label{sec:applications}
%%%
%%%
%%%
Here we discuss a few of the questions in Section \ref{subsec:special_cases}. First, we find that all of the special cases have a certain structure which resembles a blown-up version of matrix scaling. There's nothing particularly special about this structure, but it allows one to apply Theorem \ref{thm:gln_scalable} much more easily.
\begin{definition}\label{dfn:block_diag}
Let $\mathbf{n} = (n_1, \dots, n_s)$ be a sequence of positive integers summing to $n$, and $\mathbf{m} = (m_1, \dots, m_t)$ a sequence of positive integers summing to $m$. Say $T$ is $(\mathbf{m}, \mathbf{n})$-block-diagonal if every Kraus operator $T$ is of the form 
\begin{align*}
B_{i;j,k} = \left[\begin{array}{ccccc} 0_{m_1, n_1} & \hdots &  0_{m_1, n_k} & \hdots& 0_{m_1, n_{s}} \\
\vdots &\ddots&\vdots &\ddots&\vdots\\
0_{m_j, n_1}  & \hdots&  A_{i;j,k} &\hdots &0_{m_j, n_{s}} \\
 \vdots &\ddots&\vdots &\ddots&\vdots\\
0_{m_{t}, n_1}  & \hdots & 0_{m_{t}, n_k} & \hdots & 0_{m_{t}, n_{p_1}}
\end{array}\right]. %\left(e_i e_k^\dagger \otimes  \eta_{q'_i} A_j \eta_{p'_k}^\dagger: i \in [q_1], j \in [r], k \in [p_1]\right).
\end{align*}
where $A_{i; j,k} \in \Mat_{m_j \times n_k}\CC$, $i \in [r]$, $j \in [t]$, $k \in [s]$. Define 
$$\GL_{\mathbf{m}}(\CC) = \bigoplus_{i = 1}^t \GL_{m_i}(\CC)$$
 and $\GL_{\mathbf{n}}(\CC) = \bigoplus_{j = 1}^s \GL_{n_j}(\CC)$. For $i \in [t]$, define
 $E_\bullet(i)$ to be the standard flag on $\CC^{m_i}$, $\mathbf{q}(i)$ to be a nondecreasing sequence of $m_i$ nonnegative numbers, $Q(i) = \diag(\mathbf{q}(i))$, and $Q = \oplus_{i = 1}^t Q(i)$. Define $F_\bullet(i)$, $\mathbf{p}(j)$, $P(j)$, and $P$ analogously.
\end{definition}
We can phrase Question \ref{amcos_four_stoc} in a more convenient way for the purpose of the reduction.
\begin{qn} \label{qn:block_diag}
Which $(\mathbf{m}, \mathbf{n})$-block-diagonal completely positive maps $T$ are approximately $\GL_{\mathbf{m}}(\CC) \times \GL_{\mathbf{n}}(\CC)$ scalable to 
$$\left( I_n \to Q, I_m \to P\right)?$$
\end{qn}
It is convenient to define a more restricted notion of $T$-independence.
\begin{definition}\label{def:block_independent}
Suppose $T$ is $(\mathbf{m}, \mathbf{n})$-block-diagonal. Say the pair of tuples $\mathbf{L} = (L_1, \dots, L_t)$ and $\mathbf{R} = (R_1, \dots, R_s)$ of subspaces $L_i \subset \CC^{m_i}$ and $R_j \subset \CC^{n_j}$ are block-$T$-independent if $L_j \subset (A_{i; j,k} R_k)^\perp$ for all $i \in [r], j \in [t], k \in [s]$. 
\end{definition}
We omit the proof of the following claim, which is tedious but straightforward linear algebra.
\begin{proposition}\label{thm:bl_rnd}
Suppose $T$ is $(\mathbf{m}, \mathbf{n})$-block-diagonal. Then
 $T$ is $(P, Q)$-rank-nondecreasing if and only if $\Tr P = \Tr Q$ and for every block-$T$-independent pair $(\mathbf{L}, \mathbf{R})$, 
\begin{align} \sum_{i = 1}^t \sum_{j = 1}^{m_i} \Delta q_j(i) \dim L_i \cap E_j(i) + \sum_{i = 1}^s \sum_{j = 1}^{n_i} \Delta p_j(i) \dim R_i \cap F_j(i) \nonumber\\
 \leq \Tr P.\label{eq:bl_rnd}
\end{align}
\end{proposition}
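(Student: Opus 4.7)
The proposition translates Definition \ref{def:rnd_diff} into language adapted to the block-diagonal structure, so the plan is to match the two sides of the equivalence step by step, in two stages.

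\emph{Correspondence of independence notions.} Since each Kraus operator $B_{i;j,k}$ has its only nonzero entries in block $(j,k)$, the inner product splits as
$$\langle \ell, B_{i;j,k} r\rangle = \langle \pi_j \ell, A_{i;j,k} \pi_k r\rangle,$$
where $\pi_j: \CC^m \to \CC^{m_j}$ and $\pi_k : \CC^n \to \CC^{n_k}$ are the block projections. Two consequences follow: (a) given block-$T$-independent $(\mathbf{L}, \mathbf{R})$, the direct sums $L = \bigoplus_j L_j$, $R = \bigoplus_k R_k$ form a $T$-independent pair; and (b) for any $T$-independent $(L, R)$, the block-projection tuples $((\pi_j L)_j, (\pi_k R)_k)$ are block-$T$-independent, and the ``block closure'' $(\bigoplus_j \pi_j L, \bigoplus_k \pi_k R)$ is a $T$-independent superset of $(L, R)$.

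\emph{Matching the sums.} The crux is the identity, for every block-decomposable $L = \bigoplus_j L_j$,
$$\sum_{i=1}^m \Delta q_i \, \dim L \cap E_i \;=\; \sum_{j=1}^t \sum_{i'=1}^{m_j} \Delta q_{i'}(j) \, \dim L_j \cap E_{i'}(j),$$
and analogously on the $\mathbf{p}$/$F$ side. This reduces to Abel-summation bookkeeping: within the $j$-th block, $\Delta q_{N_{j-1}+i'}$ agrees with $\Delta q_{i'}(j)$ for $i' < m_j$ (where $N_j := m_1+\cdots+m_j$), and the boundary discrepancy at $i' = m_j$ — namely $\Delta q_{m_j}(j) - \Delta q_{N_j} = q_1(j+1)$ for $j<t$ — cancels exactly the contribution of the ``full earlier blocks'' to $\dim L \cap E_i$, because $E_{N_j}$ contains all of $\CC^{m_1} \oplus \cdots \oplus \CC^{m_j}$ and $L_{j'} \subset L \cap E_{N_{j'}}$.

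Combining the two stages yields the equivalence. The forward direction follows by applying $(P,Q)$-rank-nondecreasingness to $L = \bigoplus_j L_j$, $R = \bigoplus_k R_k$ (which is $T$-independent by (a)) and invoking the identity above. For the reverse direction, given a $T$-independent $(L, R)$, replace it by its block closure $(\overline L, \overline R)$ from (b); since $L \subset \overline L$, we have $\dim L \cap E_i \leq \dim \overline L \cap E_i$ for all $i$, so the standard LHS for $(L,R)$ is bounded by that for $(\overline L, \overline R)$, which Part 2 identifies with the block LHS for $((\pi_j L)_j, (\pi_k R)_k)$, to which (\ref{eq:bl_rnd}) applies. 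The trace equality $\Tr P = \Tr Q$ is immediate from applying the condition to the (trivially $T$-independent) pair $(\CC^m, 0)$ and to $(0, \CC^n)$. The main — and only — obstacle is the boundary bookkeeping in the Abel summation; everything else is formal.
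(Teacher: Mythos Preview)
Your approach is essentially the paper's: both reduce to block-decomposable pairs (you via block closure plus monotonicity, the paper via maximality, which also needs monotonicity implicitly), then identify the standard left-hand side of \eqref{diff_rnd_eq} with the block sum \eqref{eq:bl_rnd}. Two points deserve attention.

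\emph{Ordering mismatch.} Your Abel-summation identity is computed with respect to the \emph{block ordering} of the basis (you use $N_j=m_1+\cdots+m_j$), but Definition~\ref{def:rnd_diff} is stated under the standing assumption that $\mathbf q$ is nonincreasing, i.e., the flag $E_\bullet$ is the one obtained after permuting basis vectors so that the diagonal of $Q=\bigoplus Q(i)$ is decreasing (this is exactly what the paper invokes: ``basis vectors added in an order under which $P,Q$ have decreasing diagonal''). This matters in your reverse direction: the step ``$L\subset\overline L$ implies the standard LHS for $(L,R)$ is bounded by that for $(\overline L,\overline R)$'' needs $\Delta q_i\ge 0$ for all $i$, which holds for the decreasing ordering but can fail at block boundaries in the block ordering. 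The fix is painless: since each $\mathbf q(j)$ is already nonincreasing, the decreasing ordering is a \emph{shuffle} of the blocks, and for any shuffle $\sigma$ and block-decomposable $L=\bigoplus_j L_j$ one has $\dim(L\cap E_i^\sigma)-\dim(L\cap E_{i-1}^\sigma)=\dim(L_j\cap E_{i'}(j))-\dim(L_j\cap E_{i'-1}(j))$ whenever $e_{\sigma(i)}$ is the $i'$-th vector of block $j$; Abel summation then gives your identity equally well in the decreasing ordering. State and use that version.

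\emph{Trace equality.} Applying \eqref{diff_rnd_eq} to $(\CC^m,0)$ gives $\Tr Q\le \Tr P$, but applying it to $(0,\CC^n)$ only gives the tautology $\Tr P\le \Tr P$; you do not obtain $\Tr P\le\Tr Q$ this way. In fact $(P,Q)$-rank-nondecreasingness alone does not force $\Tr P=\Tr Q$ (take $n=m=1$, $T=\mathrm{id}$). The paper treats the trace equality as ambient hypothesis here rather than deriving it, and you should do the same.
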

\begin{proof} Clearly, \ref{eq:bl_rnd} holds if $T$ is $(P, Q)$-rank-nondecreasing, because if $(\textbf L, \textbf R)$ is block-$T$-independent then $\oplus_i L_i$, $\oplus_i R_j$ are $T$-independent and the value of the left-hand side of \ref{diff_rnd_eq} on the standard flags (with basis vectors added in an order under which $P,Q$ have decreasing diagonal) is the left-hand side of \ref{eq:bl_rnd}.\\ 

It remains to show that if \ref{eq:bl_rnd} holds then $T$ is $(P, Q)$-rank-nondecreasing; it is enough to show show that we only need to check \ref{diff_rnd_eq} on subspaces of the form $R = \oplus R_i$ and $L = \oplus L_i$. Consider $L$ and $R$ that are $T$-independent. We may assume $L$ is maximal for $R$ fixed and $R$ is maximal for $L$ fixed. Note that if $\pi_i: \CC^n \to \CC^n$ is the projection to the $i^{th}$ summand (isomorphic to $\CC^{n_i}$), then $R' = \sum_i  \pi_i R \supset R$ and $(L,R')$ is also $T$-independent. Thus, we may assume $R = \oplus R_i$ and $L = \oplus L_i$.
\end{proof}
The proofs of the next theorem closely mirrors the proof of Theorem \ref{thm:gln_scalable}. The only difference is that the scalings must be taken in $\GL_{\mathbf{m}}(\CC) \times \GL_{\mathbf{n}}(\CC)$ - however, if $T$ is $(\mathbf{m}, \mathbf{n})$-block-diagonal, it's easy to show this is always possible. One must also use that the reduction to the nonsingular case $T \mapsto \underline{T}$ of Lemma \ref{inv_red_lite} preserves being block-diagonal but possibly decreases $m_i$ and $n_j$.
\begin{theorem}\label{thm:block_diag_scalable}
If $T$ is $(\mathbf{m}, \mathbf{n})$-block-diagonal, then $T$ is $\GL_{\mathbf{m}}(\CC) \times \GL_{\mathbf{n}}(\CC)$-scalable to $\left( I_n \to Q, I_m \to P\right)$
if and only if $T_{\mathbf{g}, \mathbf{h}}$ is $(P,Q)$-rank-nondecreasing for generic $(\mathbf{g}^\dagger, \mathbf{h}) \in \GL_{\mathbf{m}}(\CC) \times \GL_{\mathbf{n}}(\CC)$.
\end{theorem}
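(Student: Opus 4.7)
The plan is to mirror the proof of Theorem \ref{thm:gln_scalable} while exploiting the block-diagonal structure, using Proposition \ref{thm:bl_rnd} to translate between $(P,Q)$-rank-nondecreasingness and its block version. As a preliminary reduction, I would apply $T \mapsto \underline{T}$ from Lemma \ref{inv_red_lite} to assume $P, Q$ are nonsingular; this reduction preserves the $(\mathbf{m}, \mathbf{n})$-block-diagonal form of $T$, possibly after shrinking each $m_i$ and $n_j$ to the rank of the corresponding block of $Q$, $P$. The two key structural facts I would use throughout are that $(\mathbf{m},\mathbf{n})$-block-diagonality is preserved by scalings in $\GL_\mathbf{m}(\CC) \times \GL_\mathbf{n}(\CC)$, and that for such block-diagonal $T$ the images $T(X)$ and $T^*(Y)$ are always block-diagonal and depend only on the block-diagonal parts of $X, Y$.

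For the forward implication, I would adapt Proposition \ref{prp:apx_rnd_stoc} and Corollary \ref{cor:sure_rnd} to the block setting: if $T$ is approximately $\GL_\mathbf{m}(\CC)\times\GL_\mathbf{n}(\CC)$-scalable to $(I_n \to Q, I_m \to P)$, then taking $\epsilon$ smaller than the minimum nonzero value in the analogue of \eqref{ds_to_rnd} indexed by block-$T$-independent pairs produces a specific $(\mathbf{g}^\dagger, \mathbf{h}) \in \GL_\mathbf{m}(\CC)\times\GL_\mathbf{n}(\CC)$ such that $T_{\mathbf{g},\mathbf{h}}$ satisfies the block rank condition of Proposition \ref{thm:bl_rnd}, and hence is $(P,Q)$-rank-nondecreasing. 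Then Lemma \ref{lem:exists_iff_generic_stoc} supplies an affine variety $V(T,P,Q)\subset \Mat_{m\times m}\CC\times\Mat_{n\times n}\CC$ whose complement is precisely the set where rank-nondecreasingness holds; intersecting with the affine subspace $\Mat_\mathbf{m}\CC\times\Mat_\mathbf{n}\CC$ yields an affine variety that does not contain the specific point just exhibited, so it misses a Zariski-dense (hence generic) subset of $\GL_\mathbf{m}(\CC)\times\GL_\mathbf{n}(\CC)$.

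For the reverse implication, pick any $(\mathbf{g},\mathbf{h}) \in \GL_\mathbf{m}(\CC)\times\GL_\mathbf{n}(\CC)$ in the generic set, so $T_{\mathbf{g},\mathbf{h}}$ is $(P,Q)$-rank-nondecreasing; by Theorem \ref{thm:apx_delta_scalable} it is approximately $\GL(E_\bullet)\times\GL(F_\bullet)$-scalable to $(I_n\to Q, I_m\to P)$. The main point is that for the operator $T_{\mathbf{g},\mathbf{h}}$, which is itself $(\mathbf{m},\mathbf{n})$-block-diagonal, the upper-triangular scalings produced by Algorithm \ref{alg:alg_g_tri} can be taken inside the smaller group
\[
\bigoplus_{i=1}^t \GL(E_\bullet(i))\;\times\;\bigoplus_{j=1}^s \GL(F_\bullet(j))\;\subset\;\bigl(\GL_\mathbf{m}(\CC)\cap\GL(E_\bullet)\bigr)\times\bigl(\GL_\mathbf{n}(\CC)\cap\GL(F_\bullet)\bigr).
\]
Indeed, $T_{\mathbf{g},\mathbf{h}}(P)$ and $T_{\mathbf{g},\mathbf{h}}^*(Q)$ are block-diagonal with block sizes $\mathbf{m}$ and $\mathbf{n}$, so their Cholesky decompositions are block-diagonal with upper-triangular blocks; the Sinkhorn iteration therefore never leaves this subgroup. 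Composing with the initial $(\mathbf{g},\mathbf{h})$ places all scalings in $\GL_\mathbf{m}(\CC)\times\GL_\mathbf{n}(\CC)$, exactly as required.

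The main obstacle I anticipate is the adaptation of Proposition \ref{prp:apx_rnd_stoc} to the block setting in the forward direction: one must verify that for $(\mathbf{g},\mathbf{h}) \in \GL_\mathbf{m}(\CC)\times\GL_\mathbf{n}(\CC)$ the pullback of a block-$T$-independent pair $(\mathbf{L},\mathbf{R})$ is still block-independent for $T_{\mathbf{g},\mathbf{h}}$ with the same intersection dimensions $\dim L_i \cap E_j(i)$, and that the Rayleigh-trace inequality from \cite{F00} can be applied blockwise. This is essentially bookkeeping — it follows because $\GL_\mathbf{m}(\CC)$ acts within each summand $\CC^{m_i}$ — but needs to be written out carefully, together with verifying that Lemma \ref{lem:exists_iff_generic_stoc}'s variety behaves well under restriction to $\Mat_\mathbf{m}\CC\times\Mat_\mathbf{n}\CC$.
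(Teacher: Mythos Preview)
Your proposal is correct and follows the paper's approach, which merely remarks that the proof mirrors Theorem \ref{thm:gln_scalable} using that block-diagonal structure keeps the Sinkhorn scalings (via block Cholesky) inside $\GL_\mathbf{m}(\CC)\times\GL_\mathbf{n}(\CC)$ and that the reduction $T\mapsto\underline{T}$ preserves block-diagonality while possibly shrinking the $m_i,n_j$. Your anticipated obstacle in the forward direction is unnecessary: an $\epsilon$-scaling $T_{\mathbf{g},\mathbf{h}}$ is an $\epsilon$-scaling of \emph{itself} by the identity, which lies in $\GL(E_\bullet)\times\GL(F_\bullet)$, so Corollary \ref{cor:sure_rnd} applies directly to give $(P,Q)$-rank-nondecreasingness of $T_{\mathbf{g},\mathbf{h}}$ without any block adaptation of Proposition \ref{prp:apx_rnd_stoc}.
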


% % %
\subsection{Matrix Scaling}\label{rc_scaling_app}

 Say $XAY$ is an \emph{$\epsilon$-$(\mathbf{r},\mathbf{c})$-scaling} of a nonnegative matrix $A$ if $X$ and $Y$ are diagonal matrices and the and column sum vectors of $XAY$ are at most $\epsilon$ from $\mathbf{r}$ and $\mathbf{c}$, respectively, in (say) Euclidean distance and that $A$ is \emph{approximately $(\mathbf{r},\mathbf{c})$-scalable} if for every $\epsilon>0$ there exists an $\epsilon$-$(\mathbf{r},\mathbf{c})$-scaling of $A$. Given $A, r, c$, the $(\mathbf{r},\mathbf{c})$-scaling problem consists of deciding the existence of and finding $\epsilon$-$(\mathbf{r},\mathbf{c})$-scalings. The $(\mathbf{r},\mathbf{c})$-scaling problem has practical applications such as statistics, numerical analysis, engineering, and image reconstruction, and theoretical uses such as strongly polynomial time algorithms for approximating the permanent \cite{Si64}, \cite{RS89},\cite{LSW98}.\\
\indent There is a simple criterion for approximate $(\mathbf{r},\mathbf{c})$-scalability. 
\begin{theorem}[Rothblum and Schneider \cite{RS89}] \label{rcthm}
A nonnegative matrix $A$ is approximately $(\mathbf{r},\mathbf{c})$ scalable if and only if $\sum_{i} r_i = \sum_{j} c_j$ and for every zero submatrix $L\times R$ of $A$, 
$$\sum_{i\in L} r_i \leq \sum_{j \not\in R} c_j.$$
\end{theorem}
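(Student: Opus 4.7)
The plan is to realize matrix scaling as an instance of operator scaling with specified marginals in the block-diagonal setting and then apply Theorem~\ref{thm:block_diag_scalable}. Given $A = (a_{ij}) \in \Mat_{m\times n}(\RR_{\geq 0})$, define $T:\Mat_{n\times n}(\CC) \to \Mat_{m\times m}(\CC)$ to be the completely positive map with Kraus operators $\{\sqrt{a_{ij}}\, E_{ij}\}_{(i,j) \in [m]\times[n]}$, where $E_{ij}$ is the $m\times n$ matrix with a single $1$ in position $(i,j)$. A direct computation gives $T(\diag(\mathbf{y})) = \diag(A\mathbf{y})$ and $T^*(\diag(\mathbf{x})) = \diag(A^{T}\mathbf{x})$, so $T$ maps diagonal matrices to diagonal matrices. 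In the language of Definition~\ref{dfn:block_diag}, $T$ is $(\mathbf{1}_m, \mathbf{1}_n)$-block-diagonal with $\mathbf{m} = (1,\dots,1)$ and $\mathbf{n} = (1,\dots,1)$, so $\GL_{\mathbf{m}}(\CC) \times \GL_{\mathbf{n}}(\CC)$ is precisely the group of pairs of diagonal invertible matrices.

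Next, I would verify that approximate $(\mathbf{r},\mathbf{c})$-scalability of $A$ is equivalent to approximate $\GL_{\mathbf{m}}(\CC)\times\GL_{\mathbf{n}}(\CC)$-scalability of $T$ to $(I_n \to Q,\, I_m \to P)$, where $Q = \diag(\mathbf{r})$ and $P = \diag(\mathbf{c})$. Indeed, if $X = \diag(x_1,\dots,x_m)$ and $Y = \diag(y_1,\dots,y_n)$, then $T_{X,Y}(I_n) = \diag\bigl(x_i^2 \sum_j a_{ij}y_j^2\bigr)_i$ and $T^*_{X,Y}(I_m) = \diag\bigl(y_j^2 \sum_i a_{ij}x_i^2\bigr)_j$, which are exactly the row- and column-sum vectors of the scaled matrix $X^2 A Y^2$. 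Thus setting $X' = X^2, Y' = Y^2$ translates operator scalings into matrix scalings and vice versa, and approximation errors transfer linearly. The necessary condition $\sum_i r_i = \sum_j c_j$ corresponds to $\Tr P = \Tr Q$.

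The third step is to translate block-$T$-independence into the zero-submatrix condition. In the all-ones block decomposition, each $L_i \subset \CC$ and $R_j \subset \CC$ is either $0$ or $\CC$; writing $L = \{i : L_i = \CC\} \subset [m]$ and $R = \{j : R_j = \CC\} \subset [n]$, block-$T$-independence of $(\mathbf{L},\mathbf{R})$ says exactly that $a_{ij} = 0$ for every $i \in L,\, j \in R$, i.e.\ $L \times R$ is a zero submatrix of $A$. Since each block has dimension one, the left-hand side of \eqref{eq:bl_rnd} collapses to $\sum_{i \in L} r_i + \sum_{j \in R} c_j$ and $\Tr P = \sum_j c_j$, so the $(P,Q)$-rank-nondecreasing condition becomes $\sum_{i \in L} r_i + \sum_{j \in R} c_j \leq \sum_j c_j$ for every zero submatrix $L \times R$, which rearranges to Rothblum and Schneider's inequality $\sum_{i\in L} r_i \leq \sum_{j\not\in R} c_j$.

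Finally, I would apply Theorem~\ref{thm:block_diag_scalable} to obtain scalability. The ``generic'' hypothesis is automatic here: conjugating the Kraus operators $\sqrt{a_{ij}}E_{ij}$ by an element $(\mathbf{g},\mathbf{h}) \in \GL_{\mathbf{1}_m}(\CC)\times\GL_{\mathbf{1}_n}(\CC)$ (a pair of invertible diagonal matrices) merely rescales entries by nonzero scalars and therefore preserves the zero pattern of $A$. Consequently block-$T$-independence of any pair $(\mathbf{L},\mathbf{R})$ for $T_{\mathbf{g},\mathbf{h}}$ is the same as for $T$, and $(P,Q)$-rank-nondecreasingness holds for $T_{\mathbf{g},\mathbf{h}}$ (for every such $\mathbf{g},\mathbf{h}$) iff it holds for $T$. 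The main concern is really bookkeeping: making sure the reductions $X \leftrightarrow X^2,\, Y \leftrightarrow Y^2$ and the notions of $\epsilon$-approximation on the matrix and operator sides are compatible, and that the case with some $r_i = 0$ or $c_j = 0$ (where the diagonal scaling group becomes singular) is handled by first dropping the corresponding rows/columns of $A$ before invoking the theorem.
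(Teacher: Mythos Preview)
Your proposal is correct and follows essentially the same approach as the paper: define the block-diagonal completely positive map $T_A$ with Kraus operators $\sqrt{a_{ij}}E_{ij}$, identify approximate $(\mathbf{r},\mathbf{c})$-scalability with approximate $\GL_{\mathbf{1}_m}(\CC)\times\GL_{\mathbf{1}_n}(\CC)$-scalability of $T_A$ to $(I_n\to\diag(\mathbf r),\,I_m\to\diag(\mathbf c))$, and then read off from Proposition~\ref{thm:bl_rnd} and Theorem~\ref{thm:block_diag_scalable} that block-$T_A$-independent pairs correspond exactly to zero submatrices and that the generic scaling is irrelevant because diagonal conjugation preserves the zero pattern. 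Your additional remarks about the $X\leftrightarrow X^2$ bookkeeping and the degenerate $r_i=0$ or $c_j=0$ case are more explicit than the paper's treatment, but the argument is the same.
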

We can reduce this to an instance of Question \ref{amcos_four_stoc} as follows:
\begin{definition}\label{rc_op}
Suppose $A$ is a nonnegative $m \times n$ matrix. For $i \in [m], j \in [n]$, define $e_{ij}$ to be the $m\times n$ matrix with a one in the $ij$ entry and zeros elsewhere. Let $T_A: \Mat_{n\times n}\CC \to \Mat_{m\times m}\CC$ be the completely positive map with Kraus operators $E_{ij} = \sqrt{A_{ij}}e_{ij}$, $i \in [m], j \in [n]$.
\end{definition}
If $\mathbf{m} = (1, \dots, 1)$ and $\mathbf{n} = (1, \dots, 1)$, then $T_A$ is $(\mathbf{m}, \mathbf{n})$-block-diagonal. The next proposition is easy to check.
\begin{proposition}\label{prp:rc_scal}
$A$ is approximately $(\mathbf{r},\mathbf{c})$-scalable if and only if $T_A$ is approximately $\GL_{\mathbf{m}}(\CC) \times \GL_{\mathbf{n}}(\CC)$-scalable to 
$$(I_n \to  \diag(\mathbf{r}), I_m \to\diag(\mathbf{c})).$$
\end{proposition}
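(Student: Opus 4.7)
The plan is to verify this by direct computation, showing that scaling $T_A$ by diagonal matrices (which is what $\GL_{\mathbf{m}}(\CC) \times \GL_{\mathbf{n}}(\CC)$ consists of when all $m_i = n_j = 1$) is literally the same operation as $(\mathbf{r},\mathbf{c})$-scaling of $A$, transported through the correspondence $A \mapsto T_A$.

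First I would compute the action of $T_A$ explicitly. From $E_{ij} = \sqrt{A_{ij}}\, e_{ij}$, one sees that for any $X \in \Mat_{n\times n}(\CC)$,
\[
T_A(X) = \sum_{i,j} A_{ij}\, e_{ij} X e_{ij}^\dagger = \diag\!\left( \sum_{j} A_{ij} X_{jj} \right)_{i \in [m]},
\]
so $T_A$ depends only on the diagonal of its input and produces a diagonal output; symmetrically for $T_A^*$.

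Next I would describe how the scaling group acts. With $\mathbf{m} = (1, \dots, 1)$ and $\mathbf{n} = (1, \dots, 1)$, the group $\GL_{\mathbf{m}}(\CC)\times \GL_{\mathbf{n}}(\CC)$ is exactly the pair of invertible diagonal matrices. For diagonal $g = \diag(g_1,\dots,g_m)$ and $h = \diag(h_1,\dots,h_n)$, a direct calculation gives
\[
(T_A)_{g,h}(X) = g^\dagger T_A(h X h^\dagger) g = \diag\!\left( |g_i|^2 \sum_j A_{ij} |h_j|^2 X_{jj} \right)_{i},
\]
which is precisely $T_B$ for $B = XAY$ with the nonnegative diagonal matrices $X = \diag(|g_1|^2,\dots,|g_m|^2)$ and $Y = \diag(|h_1|^2,\dots,|h_n|^2)$. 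In particular, $(T_A)_{g,h}(I_n) = \diag(\text{row sums of }B)$ and $(T_A)_{g,h}^*(I_m) = \diag(\text{column sums of }B)$.

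Finally I would translate the approximation notion. Given the above identification, $(T_A)_{g,h}$ being an $\epsilon$-$(I_n \to \diag(\mathbf{r}),\, I_m \to \diag(\mathbf{c}))$-scaling is the same (up to a factor between the trace norm on diagonal matrices and Euclidean norm on their diagonals) as $XAY$ being an $\epsilon$-$(\mathbf{r},\mathbf{c})$-scaling of $A$. Conversely, any diagonal $X, Y$ with positive entries witnessing $(\mathbf{r},\mathbf{c})$-scalability of $A$ lifts to $(g,h) = (\diag(\sqrt{X_{ii}}), \diag(\sqrt{Y_{jj}}))$, which is the desired $\GL_{\mathbf{m}}(\CC)\times \GL_{\mathbf{n}}(\CC)$ scaling. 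Taking $\epsilon \to 0$ on both sides completes the equivalence. There is no serious obstacle: the only point requiring a little care is noting that a general element of $\GL_{\mathbf{m}}(\CC)\times \GL_{\mathbf{n}}(\CC)$ acts on $T_A$ only through the absolute values of its diagonal entries, so no generality is lost by restricting to positive diagonal $X, Y$ on the matrix-scaling side.
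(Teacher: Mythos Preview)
Your proposal is correct and is exactly the kind of direct verification the paper has in mind: the paper does not give a proof of this proposition at all, simply saying ``The next proposition is easy to check.'' One tiny remark: since the paper defines $\|A\| = \sqrt{\Tr A A^\dagger}$ (the Frobenius norm) and measures the matrix-scaling error in Euclidean distance on the row/column-sum vectors, for diagonal matrices these coincide exactly, so there is no conversion factor to worry about.
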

Theorem \ref{rcthm} follows easily from Theorem \ref{thm:block_diag_scalable}.
\begin{proof}[Proof of Theorem \ref{rcthm}]
By Proposition \ref{prp:rc_scal}, it is enough to characterize $(\diag(\mathbf{r}), \to\diag(\mathbf{c}))$-rank-nondecreasingness of $T_A$. Since $\CC^{m_i}$ and $\CC^{n_i}$ are copies of $\CC$, $L_i, R_j$ are either $\{0\}$ or $\CC$, $E_\bullet(i) = (\{0\}, \CC)$ and $F_\bullet(j) = (\{0\}, \CC)$. Let $L = \{i: L_i = \CC\} \subset [m]$ and $R = \{j: R_j = \CC\} \subset [n]$. Note that $\mathbf{L}$ and $\mathbf{R}$ are block-$T_A$-independent if and only if $T\times L$ is a zero submatrix of $A$, and the group elements $\mathbf{g}$ and $\mathbf{h}$ make no difference. Equation \ref{eq:bl_rnd} becomes 
$$ \sum_{i \in L} c_i + \sum_{j \in R} r_i \leq \sum_{j \in [n]} r_i,$$
which is equivalent to the condition for approximate $(\mathbf{r},\mathbf{c})$-scalability in \ref{rcthm}.
\end{proof}

\subsection{Eigenvalues of Sums of Hermitian Matrices}\label{hermitians_app}
Here is an old question in linear algebra, apparently originally due to Weyl. It is also sometimes called Horn's Problem.
\begin{qn}[Weyl]\label{gelf}
Let $\alpha, \beta, \gamma$ be nonincreasing sequences of $m$ real numbers. When are $\alpha, \beta, \gamma$ the spectra of some $m\times m$ Hermitian matrices $A,B,C$ satisfying $A + B = C$?
 \end{qn}
This question essentially asks for a complete list of inequalities satisfied by the eigenvalues of sums of Hermitian matrices. Klyachko showed a relationship between the eigenvalues of sums of Hermitian operators and certain constants known as the \emph{Littlewood-Richardson coefficients}. 
\begin{definition}%A \emph{partition} is a non-increasing sequence of positive integers.
%If $k \geq 0$, let$$P_k = \{(\lambda_1, \dots \lambda_k) \in \Z^k: \lambda_1 \geq \dots \geq \lambda_k \geq 0\}$$be the set of partitions of length $k$. 
If $I = \{i_1 < \dots< i_k\}\subset [m],$ let $\rho(I)$ be the partition
$$\rho(I) = (i_k - k, \dots, i_2 - 2, i_1 - 1).$$
\end{definition}
The Littlewood-Richardson coefficient of the partitions $\lambda, \mu$, and $\nu$ is a nonnegative integer denoted $c^\lambda_{\mu,\nu}$. 
\begin{theorem}[Klyachko \cite{Kl98}]\label{kly}
The three nonincreasing sequences $\alpha, \beta, \gamma$ of length $m$ are the spectra of some $m\times m$ Hermitian matrices $A,B,C$ satisfying $A + B = C$ if and only if $\sum_{i = 1}^m \alpha_i + \beta_i -\gamma_i = 0$
and for all $n < m$, 
$$\sum_{i \in I} \alpha_i + \sum_{j \in J} \beta_j \leq \sum_{k \in K} \gamma_k$$ 
for all $|I| = |J| = |K| = n$ such that the Littlewood-Richardson coefficient $c^{\rho(K)}_{\rho(I), \rho(J)}$ is positive.
%$$c^{\rho(K)}_{\rho(I), \rho(J)}>0.$$
\end{theorem}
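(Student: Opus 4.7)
The plan is to derive Theorem \ref{kly} from Theorem \ref{thm:block_diag_scalable} by building an operator whose scalings correspond to Hermitian triples. First, by sending $C \mapsto -C$, shifting each of $A, B, C$ by appropriate scalar multiples of $I_m$, and rescaling globally, I reduce to the following equivalent question: for nonincreasing nonnegative sequences $\alpha, \beta, \gamma \in \RR^m$ with $\sum_i (\alpha_i + \beta_i + \gamma_i) = m$, do there exist Hermitian matrices $A, B, C$ with $\lambda(A) = \alpha$, $\lambda(B) = \beta$, $\lambda(C) = \gamma$, and $A + B + C = I_m$? This is precisely the setting targeted by Algorithm \ref{alg:inverse_eig}, and the inequalities of Theorem \ref{kly} translate under the substitution into an equivalent set of inequalities in $(\alpha, \beta, \gamma)$.

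Next, let $T: \Mat_{3m \times 3m}(\CC) \to \Mat_{m \times m}(\CC)$ be the $((m), (m,m,m))$-block-diagonal completely positive map with a single Kraus operator in each source-block position equal to $I_m$, so $T(X_1 \oplus X_2 \oplus X_3) = X_1 + X_2 + X_3$. Set $P = \diag(\alpha) \oplus \diag(\beta) \oplus \diag(\gamma)$ and $Q = I_m$. For a scaling $(g, h_1 \oplus h_2 \oplus h_3) \in \GL_m(\CC) \times \GL_m(\CC)^3$, the constraint $T_{g, \mathbf{h}}^*(I_m) = I_{3m}$ forces each $U_k := g^\dagger h_k$ to be unitary, and in that case $T_{g,\mathbf{h}}(P) = \sum_k U_k \diag(\mathrm{spec}_k) U_k^\dagger$. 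Hence exact $(P \to I_m,\, I_m \to I_{3m})$-scalability of $T$ is equivalent to the existence of the desired $A, B, C$; by Proposition \ref{prp:in_out} this is the same as exact $(I_{3m} \to I_m,\, I_m \to P)$-scalability. A $QR$-decomposition combined with the compactness of $U(m)^3$ upgrades the \emph{approximate} scalability guaranteed by Theorem \ref{thm:block_diag_scalable} into exact existence.

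Theorem \ref{thm:block_diag_scalable} then reduces the problem to checking that $T_{\mathbf{g}, \mathbf{h}}$ is $(P, Q)$-rank-nondecreasing for generic $(\mathbf{g}, \mathbf{h})$. The Kraus operators of $T_{\mathbf{g}, \mathbf{h}}$ have $M_k := g^\dagger h_k$ in the $k$-th block slot, and for generic $(\mathbf{g}, \mathbf{h})$ the $M_k$ are independent generic invertible matrices. Unwinding Proposition \ref{thm:bl_rnd} with $Q = I_m$ (so that only $\dim L_1$ enters from the target side) reduces the condition to: for every quadruple $(L_1; R_1, R_2, R_3)$ of subspaces of $\CC^m$ such that $R_k \perp M_k^\dagger L_1$ for all $k$ and generic $M_k$,
\[\dim L_1 + \sum_{k=1}^3 \sum_{j \in J_k} \mathrm{spec}_k(j) \leq m,\]
where $J_k \subset [m]$ indexes the Schubert cell of $R_k$ in the standard flag and $\mathrm{spec}_k \in \{\alpha, \beta, \gamma\}$ according to $k$.

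The main obstacle is matching these inequalities to Klyachko's via classical Schubert calculus. By Kleiman's transversality theorem, the existence, for generic $M_k$, of such a quadruple with $\dim L_1 = d$ and each $R_k \in \Omega_{J_k}^\bullet(F_\bullet)$ is equivalent to the nonvanishing of a triple product of Schubert classes in $H^\ast(\mathrm{Gr}(d, m))$, which in turn is controlled by positivity of the Littlewood-Richardson coefficient $c^{\rho(K)}_{\rho(I), \rho(J)}$ appearing in Theorem \ref{kly}. Tracking the effect of the initial reduction $C \mapsto -C$ together with the shift/rescale (which reverses and complements the index sets, matched by the Weyl symmetries of LR coefficients) then recovers Klyachko's formulation verbatim.
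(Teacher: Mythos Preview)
Your proposal follows the paper's route almost exactly: the same reduction to $A + B + C = I_m$, the same block-diagonal operator $T = T_m^3$ (this is the paper's Proposition \ref{horn_red}), and the same appeal to Theorem \ref{thm:block_diag_scalable} and Proposition \ref{thm:bl_rnd}. The paper, however, does \emph{not} actually prove Theorem \ref{kly} in the Littlewood--Richardson form: it proves only the generic-flag version (Theorem \ref{kly_more}) and explicitly defers the identification of that condition with positivity of $c^{\rho(K)}_{\rho(I),\rho(J)}$ to Fulton \cite{F00}. So your final paragraph goes beyond what the paper itself carries out.

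That final step has a gap as written. Kleiman's transversality applies to intersections of Schubert varieties taken with respect to mutually generic flags in a \emph{single} Grassmannian; your quadruple $(L_1; R_1, R_2, R_3)$ with the coupled constraints $R_k \perp M_k^\dagger L_1$ does not fit that template directly, since the Schubert conditions on the $R_k$ live in possibly different Grassmannians and are all tied together through the auxiliary subspace $L_1$. The paper resolves exactly this in its proof of Theorem \ref{kly_more}: one first replaces all of the $R_k'$ by the single subspace $R = \sum_k R_k'$ and takes $L' = R^\perp$, which only increases the left-hand side of the rank-nondecreasingness inequality. After this collapse the condition reads ``for generic flags $h(1)F_\bullet,\, h(2)F_\bullet,\, h(3)F_\bullet$, every $R$ satisfies $\sum_{i,j} \Delta p_j(i)\dim(R \cap h(i)F_j(i)) \le \dim R$,'' which \emph{is} a triple Schubert-variety intersection in $\mathrm{Gr}(\dim R, m)$ and to which Kleiman applies cleanly. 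You should insert this collapsing step before invoking Schubert calculus. Your upgrade from approximate to exact scalability via ``$QR$ plus compactness of $U(m)^3$'' is also a bit loose; compare the direct compactness argument on the $B_k(i) = g_k^\dagger h_k(i)$ in the paper's proof of Proposition \ref{horn_red}.
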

%The Littlewood-Richardson coefficients arise in representation theory, Schubert calculus, and combinatorics. 
%Deciding if $c^\lambda_{\mu, \nu}>0$ arises naturally in Geometric Complexity Theory [\cite{MNS12}]. 
Though computing $c^\nu_{\lambda, \mu}$ is $\#P$-hard, there exists an algorithm to decide if $c^\nu_{\lambda, \mu} > 0$ in strongly polynomial time \cite{MNS12}.\\
\indent Combined with Theorem \ref{kly}, Knutson and Tao's answer to the Saturation conjecture in the positive \cite{KT00} and a different result of Klyachko \cite{Kl98} show that the admissible spectra are described by a recursive system of inequalities originally conjectured by Alfred Horn \cite{H62}. \\
\indent We show that Question \ref{gelf} can be reduced to an instance of Question \ref{amcos_four_stoc}, after which Theorem \ref{kly} will be a corollary of our main theorem. This results in an algorithmic proof of Theorem \ref{kly}. 
We can restate Question \ref{gelf} for more than $3$ matrices. We will instead search for tuples of matrices with given spectra that add to a multiple of the identity; this is equivalent because we may subtract the target matrix from both sides and then add suitable scalar multiples of the identity to left and right hand side. This also shows it is enough to find the matrices when all the spectra are positive. 

\begin{qn}\label{qn:more_mats}
For which tuples $(\mathbf{p}(1) \dots \mathbf{p}(s))$ of length-$m$ weakly decreasing sequences positive numbers do there exist positive-definite $m\times m$ Hermitian matrices $H_i$, $i \in [s]$ such that $\lambda(H_i) = \mathbf{p}(s)$ and 
$$\sum_{i=1}^t H_i =  I_m?$$
\end{qn}
Klyachko answered Question \ref{qn:more_mats} in terms of intersections of Schubert varieties with respect to generic flags, which can in turn be described by the combinatorially defined higher Littlewood-Richardson coefficients $c^{\lambda}_{\mu_1, \dots \mu_r}$. Here is the statement we wish to reprove via operator scaling. We do not show that it is the same as Theorem \ref{kly}, but instead refer the reader to \cite{F00}. Let $\mathbf{n} = (m, \dots, m)$, be a sequence of $s$ many $m's$. Let $\GL_{\mathbf{n}}(\CC)$ and $F_\bullet(i), i \in [s]$ be as in Definition \ref{dfn:block_diag}.

\begin{theorem}[Klyachko, \cite{Kl98}]\label{kly_more}
The answer to Question \ref{qn:more_mats} is positive if and only if $\Tr P = m$ and for a generic tuple $$\mathbf{h} = (h(1), \dots, h(s)) \in \GL_{\mathbf{n}}(\CC),$$
the inequality 
% $(\mathbf{p}(1) \dots \mathbf{p}(s)) \in (\R_{+}^m)^r$ is in $\cP_m^r$ if and only if it satisfies \ref{decreasing}, \ref{summing}, and \ref{big_nuff} and
%$\Tr P = m$ and for each $1\leq k \leq m,$
%\begin{equation}\sum_{s = 1}^r \sum_{i \in I(s)} p_i(s) \leq k \label{stari}\end{equation}
 %for every tuple $I = (I(1), \dots I(s)) \in \binom{[m]}{k}^{r}$ such that 
\begin{align}\sum_{i = 1}^s \sum_{j = 1}^n \Delta p_j(i) \dim R \cap h(i) F_j(i) \leq \dim R\label{eq:kly_rnd}
\end{align}
holds for all $R \subset \CC^m$. \end{theorem}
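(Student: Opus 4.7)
The plan is to reduce Question \ref{qn:more_mats} to an instance of Question \ref{qn:block_diag} and apply Theorem \ref{thm:block_diag_scalable}. Define the completely positive map $T:\Mat_{sm\times sm}(\CC)\to\Mat_{m\times m}(\CC)$ with Kraus operators $\pi_1,\dots,\pi_s$, where $\pi_k:\CC^{sm}\to\CC^m$ is the projection onto the $k$th $m$-dimensional summand of $\CC^{sm}=\bigoplus_{k=1}^s\CC^m$. Then $T(X)=\sum_k \pi_k X \pi_k^\dagger$ sums the diagonal $m\times m$ blocks, and $T$ is $(\mathbf{m},\mathbf{n})$-block-diagonal for $\mathbf{m}=(m)$ and $\mathbf{n}=(m,\dots,m)$ ($s$ copies), with $A_{1;1,k}=I_m$. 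Set $Q=I_m$ and $P=\bigoplus_i\diag(\mathbf{p}(i))$.

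For $(g,\mathbf{h})\in\GL_m(\CC)\times\GL_m(\CC)^s$, put $H_i:=g^\dagger h(i) h(i)^\dagger g$. A direct computation gives $T_{g,\mathbf{h}}(I_{sm})=\sum_i H_i$ and $T^*_{g,\mathbf{h}}(I_m)=\bigoplus_i h(i)^\dagger g g^\dagger h(i)$. By the cyclic identity $\lambda(AB)=\lambda(BA)$ applied to $A=g^\dagger h(i)$ and $B=h(i)^\dagger g$, the spectrum of $h(i)^\dagger g g^\dagger h(i)$ equals $\lambda(H_i)$. Hence $T_{g,\mathbf{h}}$ maps $(I_{sm}\to I_m,\; I_m\to P)$ exactly iff $\sum_i H_i=I_m$ and $\lambda(H_i)=\mathbf{p}(i)$, answering Question \ref{qn:more_mats}. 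For the approximate case, the constraint $\lambda(H_i^{(n)})\to\mathbf{p}(i)$ bounds $\|H_i^{(n)}\|$, so a convergent subsequence yields exact Hermitians $H_i\succ 0$ with the prescribed spectra and sum; conversely exact solutions give exact (hence approximate) scalings.

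Now invoke Theorem \ref{thm:block_diag_scalable}: $T$ is approximately $\GL_m(\CC)\times\GL_m(\CC)^s$-scalable to $(I_{sm}\to I_m,\; I_m\to P)$ iff $T_{g,\mathbf{h}}$ is $(P,Q)$-rank-nondecreasing for generic $(g^\dagger,\mathbf{h})$. By Proposition \ref{thm:bl_rnd} and $\Delta q_j(1)=\delta_{jm}$, the rank-nondecreasingness condition reads
$$\dim L_1+\sum_{i=1}^s\sum_{j=1}^m\Delta p_j(i)\dim R_i\cap F_j(i)\leq m$$
for every block-$T_{g,\mathbf{h}}$-independent pair $(L_1,R_1,\dots,R_s)$, together with $\Tr P=m$. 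The independence condition $L_1\perp g^\dagger h(k) R_k$ is equivalent to $R_k\subset h(k)^{-1}(gL_1)^\perp$. Reparameterize by $L:=gL_1\subset\CC^m$, noting $\dim L=\dim L_1$ and that $L$ is unconstrained.

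Since $\Delta p_j(i)\geq 0$, the supremum of the left side over admissible $R_k$ is attained at $R_k=h(k)^{-1}L^\perp$, giving $\dim R_k\cap F_j(k)=\dim L^\perp\cap h(k)F_j(k)$. Substituting $R:=L^\perp$ (so $\dim L=m-\dim R$), the inequality rearranges to
$$\sum_{i=1}^s\sum_{j=1}^m\Delta p_j(i)\,\dim R\cap h(i)F_j(i)\leq \dim R$$
ranging over all $R\subset\CC^m$, which is precisely the condition of Theorem \ref{kly_more}; the necessary trace normalization $\Tr P=m$ is automatic from $\Tr(\sum_i H_i)=m$. The crucial observation is that $g$ has been absorbed into the free choice of $L$ and no longer appears, so ``generic in $(g^\dagger,\mathbf{h})$'' collapses to ``generic in $\mathbf{h}$,'' matching the quantifier in the target statement. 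The main technical point to execute carefully is this reparameterization and the verification that the $g$-dependence drops out; once that is in place, the theorem follows by chaining the three equivalences above.
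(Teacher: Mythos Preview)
Your proposal is correct and follows essentially the same route as the paper: you reconstruct the reduction of Proposition~\ref{horn_red} (Question~\ref{qn:more_mats} $\leftrightarrow$ scalability of the block-diagonal map $T_m^s$), then apply Theorem~\ref{thm:block_diag_scalable} and Proposition~\ref{thm:bl_rnd}, and finally reparameterize to eliminate $g$ and collapse the block condition to Eq.~\eqref{eq:kly_rnd}. One small imprecision: the sentence ``$T_{g,\mathbf{h}}$ maps $(I_{sm}\to I_m,\; I_m\to P)$ exactly iff $\sum_i H_i=I_m$ and $\lambda(H_i)=\mathbf{p}(i)$'' is only literally true as an existential statement over $(g,\mathbf{h})$, since for fixed $(g,\mathbf{h})$ the second marginal condition is $h(i)^\dagger gg^\dagger h(i)=P(i)$, not merely $\lambda(H_i)=\mathbf{p}(i)$; your subsequent argument makes clear you intend the existential reading, which is what is needed.
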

We now reduce Question \ref{qn:more_mats} to Question \ref{qn:block_diag}.
\begin{definition} Let $n = ms$, and again let $\mathbf{n} = (m, \dots, m)$, be a sequence of $s$ many $m's$, and let $\mathbf{m} = (m)$.
%Given a collection of matrices $\cB = (B_1, \dots, B_r) \in \Mat_{m\times m}(\C)^r$, 
Define $T_m^s:\Mat_{n\times n} \CC \to \Mat_{m\times m} \CC$ to be the $(\mathbf{m}, \mathbf{n})$-block-diagonal competely positive map with Kraus operators 
$$  A_i:= \left[\begin{array}{ccc}  0_{m,mi-m} &  I_m  & 0_{m,mi}  \end{array}\right] $$
for $i \in [s]$. 
\end{definition}
\begin{proposition}\label{horn_red}
The answer to Question \ref{qn:more_mats} is positive for the tuple
 $(\mathbf{p}(1) \dots \mathbf{p}(s))$ if and only if $T_m^s$ is approximately $\GL_{\mathbf{m}}(\CC)\times \GL_{\mathbf{n}}(\CC)$-scalable to $( I_{n} \to I_m, I_{m} \to P)$.
\end{proposition}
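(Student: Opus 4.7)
The plan is to compute both marginals of the scaled operator $(T_m^s)_{g,\mathbf{h}}$ explicitly and read off a direct equivalence with Question \ref{qn:more_mats}. Each Kraus operator $A_i$ extracts the $i$-th diagonal $m\times m$ block of its $n\times n$ input, so $T_m^s(X) = \sum_{i=1}^s X_{ii}$ and $(T_m^s)^*(Y)$ is block-diagonal with $s$ copies of $Y$. For $(g, \mathbf{h}) = (g, h_1, \dots, h_s) \in \GL_m(\CC) \times \GL_m(\CC)^s$, a short calculation then gives
\begin{align*}
(T_m^s)_{g,\mathbf{h}}(I_n) &= \sum_{i=1}^s g^\dagger h_i h_i^\dagger g, \\
\bigl((T_m^s)_{g,\mathbf{h}}\bigr)^*(I_m) &= \bigoplus_{i=1}^s h_i^\dagger g g^\dagger h_i.
\end{align*}
Writing $H_i := g^\dagger h_i h_i^\dagger g \succeq 0$, the scaling targets $(I_n \to I_m, I_m \to P)$ are met exactly when $\sum_i H_i = I_m$ and $h_i^\dagger g g^\dagger h_i = \diag(\mathbf{p}(i))$; the second equation forces $\lambda(H_i) = \mathbf{p}(i)$ because $H_i$ and $h_i^\dagger g g^\dagger h_i$ are similar by cyclic reordering of $g^\dagger, h_i, h_i^\dagger, g$.

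The ``only if'' direction of the proposition is then immediate: given positive-definite $H_i$ satisfying the conclusion of Question \ref{qn:more_mats}, I would diagonalize $H_i = U_i \diag(\mathbf{p}(i)) U_i^\dagger$, set $g = I_m$ and $h_i = U_i \diag(\mathbf{p}(i))^{1/2}$, and verify that both marginal conditions now hold exactly, hence approximately. For the ``if'' direction, I would take a sequence of $\epsilon_k$-scalings with $\epsilon_k \to 0$ and form the corresponding matrices $H_i^{(k)}$. Since $\sum_i H_i^{(k)} \to I_m$ and each $H_i^{(k)} \succeq 0$, every $H_i^{(k)}$ is eventually bounded by $2 I_m$ in operator norm, so a subsequence converges to some $H_i$ with $\sum_i H_i = I_m$. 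Hoffman-Wielandt continuity, applied to the approximate dual-marginal condition, yields $\lambda(H_i) = \mathbf{p}(i)$; strict positivity of $\mathbf{p}(i)$ then upgrades $H_i$ to positive-definite, producing the desired tuple.

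The only step requiring genuine care is this compactness/limit argument in the ``if'' direction: one has to establish the a priori bound on $\|H_i^{(k)}\|$ from positive semidefiniteness together with the approximate sum condition, and invoke quantitative spectral continuity to pass from ``approximately attainable'' to ``exactly attainable.'' Everything else reduces to the elementary Kraus-operator calculation in the first paragraph.
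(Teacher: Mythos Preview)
Your proposal is correct and follows essentially the same approach as the paper. Both proofs compute the marginals of $(T_m^s)_{g,\mathbf{h}}$ explicitly in terms of the matrices $B_i = g^\dagger h_i$, use the $BB^\dagger$ versus $B^\dagger B$ same-spectrum observation, construct an exact scaling in the ``only if'' direction from a factorization of the $H_i$, and in the ``if'' direction extract a convergent subsequence from the a priori bound $H_i^{(k)} \preceq 2I_m$ together with continuity of eigenvalues; your write-up is, if anything, slightly more explicit than the paper's.
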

\begin{proof}Set $T:=T_m^s$.
First we prove the ``only if" statement. Suppose there exist $H_1, \dots, H_r$ with $\lambda(H_i) = \mathbf{p}(i)$ and $\sum_i H_i = I_m$. As $H_i \succeq 0$, we can write $H_i = B_i B_i^\dagger$ where $B_i^\dagger B_i = P(i).$ This is because $B_i B_i^\dagger$ and $B_i^\dagger B_i$ have the same spectrum and  $B_i B_i^\dagger$ is invariant under $B_i \to B_i U_i$ for $U_i$ unitary. Since $B_i$ is invertible for $i \in [r]$, it follows that $\mathbf{h} = \bigoplus_{i \in [r]} B_i \in \GL_{\mathbf{n}}(\CC)$ and 
$$T_{I, \mathbf{h}}(I_{n}) = \sum_{i = 1}^s B_i  B_i^\dagger = I_m \textrm{ and } (T_{I,\mathbf{h}})^*(I_m) = \oplus_{i = 1}^s B_i^\dagger B_i = P.$$
so $T$ is approximately $\GL_{\mathbf{m}}(\CC)\times \GL_{\mathbf{n}}(\CC)$ scalable to $(P,I)$.
%$$T(I_{rm}) = \sum_{i = 1}^r B_i B_i^\dagger = \sum_{i = 1}^r H_i^\dagger =  I_m.$$
The ``if" direction is also easy; suppose $(\mathbf{g}_k, \mathbf{h}_k)$ is a sequence of elements of $G\times H$ such that 
$$T_{\mathbf{g}_k, \mathbf{h}_k}(I_{n}) \to I_m \textrm{ and } (T_{\mathbf{g}_k, \mathbf{h}_k})^*(I_m) \to P$$
as $k\to \infty$
and that $\mathbf{h}_k = \oplus_{i \in [s]} h_k(i).$ Set $B_k(i) = \mathbf{g}_k^\dagger h_k(i)$; thus, we have 
$$ \sum_{i \in [s]} B_k(i) B_k(i)^\dagger \to I_m$$
and, for all $i \in [s]$, 
$$B_k(i)^\dagger B_k(i) \to P(i).$$  
Since the $B_k(i) B_k(i)^\dagger$ are positive definite, eventually for all $i$,\\
$B_k(i) B_k(i)^\dagger$ remains in the compact set $\{X: 0 \leq X \leq 2I_m\}$. Thus, we may pass to a subsequence such that for all $i \in [s]$ we have $B_k(i) B_k(i)^\dagger \to H_i$; by continuity the $H_i$ satisfy 
$$\sum_{i = 1}^s H_i = I_m.$$
and $\lambda(H_i) = \mathbf{p}(i)$ for $i \in [s]$.\end{proof}
This and Proposition \ref{prp:toy_works_app} prove Proposition \ref{prp:inverse_eig}, i.e. that Algorithm \ref{alg:inverse_eig} runs in time $b m^2/\epsilon$.\\
\indent The proof of Theorem \ref{kly_more} is now immediate:
\begin{proof}[Proof of Theorem \ref{kly_more}]
By Theorem \ref{thm:block_diag_scalable} and Proposition \ref{horn_red}, the answer to Question \ref{qn:more_mats} is positive if and only if $(T_m^s)_{\mathbf{g}, \mathbf{h}}$ is $(P,Q)$-rank-nondecreasing for generic $(\mathbf{g}^\dagger, \mathbf{h}) \in \GL_{\mathbf{m}}(\CC) \times \GL_{\mathbf{n}}(\CC)$.\\
By Theorem \ref{thm:bl_rnd}, this is true if and only if for a generic $(\mathbf{g}^\dagger, \mathbf{h}) \in \GL_{\mathbf{m}}(\CC) \times \GL_{\mathbf{n}}(\CC)$, we have 
\begin{align} \dim L + \sum_{i = 1}^s \sum_{j = 1}^{m} \Delta p_j(i) \dim R_i \cap F_j(i) 
 \leq m.\label{eq:weyl_rnd}
\end{align}
for all $\mathbf{R} = (R_1, \dots, R_s)$ and $\mathbf{L} = (L)$ such that $(\mathbf{L},\mathbf{R})$ is block-$(T_m^s)_{\mathbf{g}, \mathbf{h}}$-independent. Equivalently, the tuples given by $\mathbf{L} ' = (g L)$ and $\mathbf{R}'_i = (h(i) R_i: i \in [s])$ are $T_m^s$-independent. Eq. \ref{eq:weyl_rnd} becomes 
\begin{align} \dim L' + \sum_{i = 1}^s \sum_{j = 1}^{m} \Delta p_j(i) \dim R_i' \cap h(i) F_j(i)
 \leq m.\label{eq:weyl_rnd_1}
\end{align}
Finally, if $(\mathbf{L}', \mathbf{R}')$ is block-$T$-independent, we may replace $R'_i$ by $R = \sum_{i \in [s]}{R_i}$ and $L'$ by $R^\perp$ while only increasing Eq. \ref{eq:weyl_rnd_1} so that Eq. \ref{eq:weyl_rnd_1} becomes Eq. \ref{eq:kly_rnd}.
\end{proof}

%\begin{definition}
%Define $HE_m^r$ to be the set of $r + 1$-length sequences of weakly decreasing $m$-length sequences of real numbers $(p(1), \dots, p(r), q)$ such that there exist $r$ Hermitian matrices $A_i$ with $\lambda(A_i) = p(i)$ and
%$$\lambda\left( \sum_{i\in [r]} A_i\right) = q.$$
%\end{definition}

\subsection{Extensions of Theorems of Barthe and Schur-Horn}\label{forst_app}

Let $\mathbf{U} = (u_1, \dots, u_n)\in (\CC^m)^n$ be an ordered tuple of complex $m$-vectors, and $\mathbf{p} = (p_1, \dots, p_n) \in \RR_{>0}^n$. Say a linear transformation $B:\CC^m \to \CC^m$ puts a collection of vectors $\mathbf{U}$ in \emph{radial isotropic position} with respect to $\mathbf{p}$ if  $$\sum_{i =1}^{n}p_i\frac{ Bu_i ( Bu_i)^\dagger}{\|Bu_i\|^2} = I.$$
\begin{qn}\label{forst_quest}
Given $\mathbf{U}$ and $\mathbf{p}$, when is there a linear transformation $B$ that puts $\mathbf{U}$ in isotropic position with respect to $\mathbf{p}$? 
\end{qn}

%Forster \cite{Fo02} showed that Question \ref{forst_quest} has a positive answer if $\mathbf{p} = \mathbf{1}$ and $\mathbf{U}$ is in general position, and used this fact to prove linear lower bounds on unbounded-error communication complexity by showing that explicit matrices have large sign-rank. Question \ref{forst_quest} was completely solved by Barthe \cite{B98} and applied to study the Brascamp-Lieb inequalities in analysis. Barthe's Theorem has also been applied in the unsupervised learning problem known as \emph{subspace recovery} \cite{AM13} and to prove upper bounds on the rate of locally correctable codes over the reals \cite{ASZ14}. 
Barthe showed Question \ref{forst_quest} has a positive answer if $p$ lies in a certain polytope, which we now describe.

%This is a theorem about putting vectors in isotropic position with a simultaneous linear transformation and scaling, which comes up in communication complexity and also in proofs of the Brascamp-Lieb inequality, which will be described later.

\begin{definition}\label{forster_poly}Let $\mathbf{U} = (u_1, \dots, u_n)\in (\CC^m)^n$ be an ordered tuple of complex $m$-vectors. Let $\mathbf{B}\subset \binom{[n]}{k}$ be the collection of $m$-subsets $S$ of $[n]$ such that $\{u_i:i \in S\}$ forms a basis of $\CC^m$. If $\mathbf{q} = (q_1, \dots, q_m)$ is a sequence of nonnegative numbers, define
$$\cK_\mathbf{q}(\mathbf{U}) = \operatorname{conv}\{(1_S(i) q_\sigma(i):i \in [n] ): S \in \mathbf{B}, \sigma: S \leftrightarrow [m]\}$$
Informally, each vertex of the polytope is the indicator vector for each basis in $\mathbf{U}$ with the nonzero entries replaced by $q_1, \dots q_m$ in some order. If $\mathbf{q}$ is the all-ones vector, $B(\mathbf{U}):=\cK_{\mathbf{q}}(\mathbf{U})$ is known as the \emph{basis polytope}. 
\end{definition}

\begin{theorem}[Barthe \cite{B98}]\label{b98} 
$\mathbf{p} \in B(\mathbf{U})$ if and only if there are linear transformations $B$ that put $\mathbf{U}$ arbitrarily close to radial isotropic position with respect to $\mathbf{p}$.\\

Further, if $p$ is in the relative interior of $B(\mathbf{U})$, then there are linear transformations $B$ that put $\mathbf{p}$ in radial isotropic position with respect to $\mathbf{p}$.
%Further, if $p$ is in the relative interior of $\cK_\vec{1} (\cU)$, then there is a linear transformation that puts $\cU$ in isotropic position with respect to $p$.
\end{theorem}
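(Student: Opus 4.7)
The plan is to reduce Question~\ref{forst_quest} to a concrete instance of block-diagonal operator scaling (Question~\ref{qn:block_diag}) and then invoke Theorem~\ref{thm:block_diag_scalable}. Take $\mathbf{m} = (m)$ and $\mathbf{n} = (1, \dots, 1)$ of length $n$, define $T_\mathbf{U} : \Mat_{n \times n}(\CC) \to \Mat_{m \times m}(\CC)$ with Kraus operators $A_i = u_i e_i^\dagger$, and set $Q = I_m$, $P = \diag(\mathbf{p})$. Scaling by $(g, \mathbf{h}) \in \GL_{\mathbf{m}}(\CC) \times \GL_{\mathbf{n}}(\CC)$, whose $\mathbf{n}$-factor is just a diagonal matrix with entries $h_i$, yields Kraus operators $h_i (B u_i) e_i^\dagger$ with $B := g^\dagger$, so $(T_\mathbf{U})_{g, \mathbf{h}}$ maps $(I_n \to I_m, I_m \to \diag(\mathbf{p}))$ exactly when
\[
\sum_i |h_i|^2 B u_i (B u_i)^\dagger = I_m \quad\text{and}\quad |h_i|^2 \|B u_i\|^2 = p_i;
\]
eliminating $h_i$ recovers precisely radial isotropic position. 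Hence approximate radial isotropic scalability of $\mathbf{U}$ with weight $\mathbf{p}$ is equivalent to approximate $\GL_{\mathbf{m}}(\CC) \times \GL_{\mathbf{n}}(\CC)$-scalability of $T_\mathbf{U}$ to the stated marginals.

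Next I would unpack the rank-nondecreasingness condition supplied by Theorem~\ref{thm:block_diag_scalable} and Proposition~\ref{thm:bl_rnd}. Since $\mathbf{q}(1) = (1, \dots, 1)$ contributes only $\Delta q_m(1) = 1$ and each singleton $\mathbf{p}(i) = (p_i)$ contributes $\Delta p_1(i) = p_i$, writing $S = \{k : R_k = \CC\}$ the inequality~\eqref{eq:bl_rnd} collapses to $\dim L_1 + \sum_{k \in S} p_k \leq m$. Block-$(T_\mathbf{U})_{g, \mathbf{h}}$-independence reads $g L_1 \perp u_k$ for every $k \in S$ (the scalar factors $h_i$ drop out); since $g \in \GL_m(\CC)$ acts only as a change of basis on $L_1$, the largest admissible dimension is $m - \rank\{u_k : k \in S\}$ regardless of $g$. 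The condition therefore reduces to
\[
\sum_{k \in S} p_k \leq \rank\{u_k : k \in S\} \text{ for every } S \subseteq [n], \quad \sum_k p_k = m,
\]
which is the standard matroid-theoretic description of membership in the basis polytope $B(\mathbf{U})$. Together with the reduction above, this proves the ``arbitrarily close'' half of Theorem~\ref{b98}.

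The second statement requires upgrading approximate scalability to exact scalability when $\mathbf{p}$ lies in the relative interior of $B(\mathbf{U})$, and this is where I expect the main obstacle. My plan is to show that for generic $(g_0, \mathbf{h}_0)$ the infimum defining $\capacity((T_\mathbf{U})_{g_0, \mathbf{h}_0}, P, Q)$ is attained, since any minimizer translates back through the correspondence above into a bona fide $B$ realizing radial isotropic position exactly. Passing to upper triangular coordinates via Theorem~\ref{thm:apx_delta_scalable} and exploiting the log-concavity of Proposition~\ref{prp:log_concave} reduces the question to coercivity of the capacity functional on $B$ modulo its stabilizer. If a minimizing sequence $B_k$ degenerated in projective coordinates, I would partition the singular values of $B_k$ into scales, read off the subset $S \subseteq [n]$ and the subspace of $\CC^m$ responsible for the blow-up, and compare the leading asymptotic orders of $\det(Q, T_\mathbf{U}(B_k P B_k^\dagger))$ and $\det(P, B_k^\dagger B_k)$ to show that such a degeneration forces one of the inequalities $\sum_{k \in S} p_k \leq \rank\{u_k : k \in S\}$ to be tight, contradicting the interiority hypothesis. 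A convergent subsequence of $B_k$ then yields the desired exact scaling.
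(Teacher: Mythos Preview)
Your treatment of the first (approximate) half is essentially the paper's own argument, specialized to $Q=I_m$. The paper proves the generalization Theorem~\ref{gen_forst_thm} via exactly this reduction: the same $(\mathbf m,\mathbf n)=((m),(1,\dots,1))$ block-diagonal operator $T_{\mathbf U}$ with Kraus operators $u_i e_i^\dagger$, the same invocation of Theorem~\ref{thm:block_diag_scalable}/Proposition~\ref{thm:bl_rnd}, and the same unwinding of block-$T$-independence to the inequalities $\sum_{k\in S}p_k\le \rank\{u_k:k\in S\}$ (see the proof of Proposition~\ref{alt_char_forst}). The one cosmetic difference is that the paper, working with general $Q$, identifies these inequalities with the polymatroid $\cK_{\mathbf q}(\mathbf U)$ via Edmonds' theorem (Proposition~\ref{alt_char_poly}); for $Q=I_m$ your appeal to the ``standard'' matroid-rank description of the basis polytope is a legitimate shortcut.

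For the second (exact, relative-interior) half, there is nothing to compare against: the paper does not prove this statement. Theorem~\ref{b98} is simply cited from Barthe, and the paper's own Theorem~\ref{gen_forst_thm} generalizes only the approximate part. Your compactness/coercivity outline is in the right spirit---this is essentially how Barthe argues---but as written it is only a sketch. In particular, the sentence about ``passing to upper triangular coordinates via Theorem~\ref{thm:apx_delta_scalable} and exploiting Proposition~\ref{prp:log_concave}'' does not quite fit: the capacity in Definition~\ref{def:capacity} is already an infimum over the $h$-variable, so what you actually want is coercivity of the single functional $B\mapsto \log\det\bigl(\sum_i p_i\,\tfrac{Bu_i(Bu_i)^\dagger}{\|Bu_i\|^2}\bigr)$ (equivalently, of Barthe's $f(\mathbf t)=\log\det\sum_i e^{t_i}u_iu_i^\dagger-\sum_i p_it_i$, cf.\ the remark at the end of Section~\ref{forst_app}). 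The degeneration analysis you describe---partitioning singular values into scales and reading off a tight facet $\sum_{k\in S}p_k=\rank\{u_k:k\in S\}$---is the correct mechanism, but it needs to be carried out carefully; the paper does not do this for you.
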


As a partial answer to Question \ref{forst_quest_stoc}, we prove a generalization of Barthe's Theorem.
\begin{definition}
Say \emph{$\mathbf{U}$ can be approximately put in $Q$-isotropic position with respect to $\mathbf{p}$} if for every $\epsilon > 0$ there exists an invertible linear transformation $B$ 
such that 
\begin{equation}\left\|\sum_{i =1}^{n}p_i\frac{  B u_i  (B u_i)^\dagger }{\| Bu_i\|}  - Q\right\| \leq \epsilon.\label{GenGeometric}\end{equation}
\end{definition}

\begin{theorem}\label{gen_forst_thm} 
Suppose $Q$ is a positive-definite matrix with spectrum $\mathbf{q} = (q_1, \dots, q_m)$. $\mathbf{U}$ can be approximately put in $Q$-isotropic position with respect to $\mathbf{p}$ if and only if $\mathbf{p} \in \cK_\mathbf{q}(\mathbf{U})$.
\end{theorem}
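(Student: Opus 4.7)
The plan is to reduce Question~\ref{forst_quest_stoc} to a block-diagonal instance of Question~\ref{qn:block_diag} and then invoke Theorem~\ref{thm:block_diag_scalable}. Set $\mathbf{m}=(m)$ and $\mathbf{n}=(1,\ldots,1)$ of length $n$, and let $T:\Mat_{n\times n}\CC\to\Mat_{m\times m}\CC$ be the $(\mathbf{m},\mathbf{n})$-block-diagonal completely positive map with one $m\times 1$ Kraus operator $A_i=u_i e_i^\dagger$ for each $i\in[n]$. A short computation gives, for $B\in\GL_m(\CC)$ and $\mathbf{h}=\diag(h_1,\ldots,h_n)\in\GL_{\mathbf{n}}(\CC)$,
$$T_{B^\dagger,\mathbf{h}}(I_n)=\sum_{i=1}^n |h_i|^2\,Bu_i(Bu_i)^\dagger \quad\text{and}\quad (T_{B^\dagger,\mathbf{h}})^*(I_m)=\diag(|h_i|^2\|Bu_i\|^2).$$
Matching the second marginal to $P$ forces $|h_i|^2=p_i/\|Bu_i\|^2$, and then the first marginal is exactly the left-hand side of \eqref{GenGeometric}. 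Consequently, $\mathbf{U}$ can be approximately put in $Q$-isotropic position with respect to $\mathbf{p}$ if and only if $T$ is approximately $\GL_{\mathbf{m}}(\CC)\times\GL_{\mathbf{n}}(\CC)$-scalable to $(I_n\to Q,\,I_m\to P)$.

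By Theorem~\ref{thm:block_diag_scalable}, the latter is equivalent to the $(P,Q)$-rank-nondecreasingness of $T_{\mathbf{g},\mathbf{h}}$ for generic $(\mathbf{g}^\dagger,\mathbf{h})\in\GL_{\mathbf{m}}(\CC)\times\GL_{\mathbf{n}}(\CC)$. Under our block structure, Proposition~\ref{thm:bl_rnd} says the relevant block-independent pairs consist of a subspace $L\subset\CC^m$ together with a subset $S\subset[n]$ (recording the indices $i$ for which the one-dimensional $R_i$ equals $\CC$), with block-$T_{\mathbf{g},\mathbf{h}}$-independence amounting to $L\perp\mathbf{g}^\dagger u_i$ for all $i\in S$. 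The inequality to be checked reads
$$\sum_{j=1}^m \Delta q_j\dim (L\cap E_j) + \sum_{i\in S} p_i \leq \Tr P.$$
For generic $\mathbf{g}$, the subspace $V_S:=\{\mathbf{g}^\dagger u_i:i\in S\}^\perp$ has dimension $m-r(S)$, where $r(S):=\dim\mathrm{span}(u_i:i\in S)$, and sits in general position with respect to $E_\bullet$. The admissible $L$ are exactly the subspaces of $V_S$, so by monotonicity the sum $\sum_j\Delta q_j\dim(L\cap E_j)$ is maximized at $L=V_S$, giving $q_{r(S)+1}+\cdots+q_m$ by a standard Schubert-cell computation. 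Using $\Tr P=\Tr Q$ (which is forced anyway by Proposition~\ref{prp:apx_rnd_stoc}), the whole family of constraints collapses to
$$\sum_{i\in S} p_i \;\leq\; q_1+q_2+\cdots+q_{r(S)} \quad\text{for every } S\subset[n].$$

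These are the defining inequalities of the base polytope of the polymatroid on $[n]$ with rank function $f(S):=q_1+\cdots+q_{r(S)}$; the function $f$ is submodular because $r$ is submodular and the weights $q_k$ are nonnegative and nonincreasing. Edmonds' greedy algorithm identifies the vertices of this base polytope: any total order on $[n]$ greedily yields a basis $S\in\mathbf{B}$ of $\mathbf{U}$ and a bijection $\sigma:S\to[m]$, and the corresponding vertex has coordinates $(\mathbf{1}_S(i)\,q_{\sigma(i)})$, matching Definition~\ref{forster_poly} exactly. Hence this polytope coincides with $\cK_\mathbf{q}(\mathbf{U})$, completing the chain of equivalences.

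The main obstacle will be the generic-position step: one must confirm that for generic $\mathbf{g}$ the Schubert intersection numbers $\dim V_S\cap E_j$ attain their generic values simultaneously for every $S\subset[n]$, so that no stricter inequality arises either from a proper subspace $L\subsetneq V_S$ or from a non-generic $V_S$. This is a routine flag-variety argument in the spirit of the proof of Lemma~\ref{lem:exists_iff_generic_stoc}, but it must be carried out carefully. The final polymatroid identification is then standard combinatorics.
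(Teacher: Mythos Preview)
Your proposal is correct and follows essentially the same route as the paper: both reduce to the $(\mathbf m,\mathbf n)$-block-diagonal operator with Kraus operators $u_ie_i^\dagger$, invoke Theorem~\ref{thm:block_diag_scalable} together with Proposition~\ref{thm:bl_rnd}, simplify the generic rank-nondecreasingness condition to $\sum_{i\in S}p_i\le q_1+\cdots+q_{r(S)}$, and then identify the resulting polytope with $\cK_{\mathbf q}(\mathbf U)$ via Edmonds' polymatroid description. The paper merely splits the argument into two separate propositions (Propositions~\ref{alt_char_forst} and~\ref{alt_char_poly}), but the content and the level of rigor at the generic-flag step are the same.
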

%Due to space constraints, we do not prove Theorem \ref{gen_forst_thm} in this version of the paper. It goes by a fairly straightforward reduction to Question \ref{qn:block_diag}. The condition for rank-nondecreasingness under generic scalings turns out to be equivalent to membership in $\cK_\mathbf{q}(\mathbf{U})$ by a theorem of Edmonds on polymatroids \cite{Ed70}.\\

Theorem \ref{gen_forst_thm} follows immediately from the next two propositions. The first follows from Theorem \ref{thm:gln_scalable}, and the second from Edmonds' work on polymatroids.
\begin{proposition}\label{alt_char_forst}
$\vec U$ can be approximately put in $Q$-isotropic position with respect to $\vec p$
if and only if 
$$\sum_{j = 1}^n p_j = \sum_{i = 1}^m q_i $$
and
$$\sum_{j \in J} p_j \leq \sum_{i = 1}^{\dim \langle u_j: j \in J\rangle} q_i$$
for all $J \subset [n]$.
\end{proposition}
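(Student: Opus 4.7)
The plan is to reduce the question of approximate $Q$-isotropic position to an instance of Question \ref{qn:block_diag} and then apply Theorem \ref{thm:block_diag_scalable} together with an elementary computation of the block-rank-nondecreasing condition for a generic scaling.

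First I would set up the operator: let $\mathbf{n} = (1, 1, \dots, 1)$ of length $n$ and $\mathbf{m} = (m)$, and define $T : \Mat_{n \times n}(\CC) \to \Mat_{m \times m}(\CC)$ with Kraus operators $A_i = u_i e_i^\dagger$ for $i \in [n]$. This $T$ is $(\mathbf{m}, \mathbf{n})$-block-diagonal, and $\GL_{\mathbf{n}}(\CC)$ consists exactly of the invertible diagonal matrices. A direct calculation shows that for $g \in \GL_m(\CC)$ and $h = \diag(h_1, \dots, h_n) \in \GL_{\mathbf{n}}(\CC)$,
\begin{align*}
T_{g,h}(I_n) &= \sum_{i=1}^n |h_i|^2 (g^\dagger u_i)(g^\dagger u_i)^\dagger, \\
T_{g,h}^*(I_m) &= \diag\bigl(|h_1|^2 \|g^\dagger u_1\|^2, \dots, |h_n|^2 \|g^\dagger u_n\|^2\bigr).
\end{align*}
Hence $T$ is approximately $\GL_m(\CC) \times \GL_{\mathbf{n}}(\CC)$-scalable to $(I_n \to Q, I_m \to \diag(\mathbf{p}))$ if and only if, by taking $|h_i|^2 = p_i/\|g^\dagger u_i\|^2$, one can make $\sum_i p_i (g^\dagger u_i)(g^\dagger u_i)^\dagger / \|g^\dagger u_i\|^2$ arbitrarily close to a matrix with spectrum $\mathbf{q}$; after absorbing a diagonalizing unitary into $g$, this is exactly approximate $Q$-isotropic position. (The necessary trace condition $\sum p_j = \sum q_i$ is immediate.)

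Next I would apply Theorem \ref{thm:block_diag_scalable}, which gives scalability if and only if $T_{g,h}$ is $(\diag(\mathbf{p}), Q)$-rank-nondecreasing for generic $(g^\dagger, h) \in \GL_m(\CC) \times \GL_{\mathbf{n}}(\CC)$. By Proposition \ref{thm:bl_rnd}, this is a condition indexed by block-$T_{g,h}$-independent pairs $(\mathbf{L}, \mathbf{R})$; since $\mathbf{R} = (R_1, \dots, R_n)$ with $R_j \in \{0, \CC\}$, this is encoded by a subset $J = \{j : R_j = \CC\} \subseteq [n]$ and a subspace $L \subseteq \CC^m$, and block-$T_{g,h}$-independence reads
\[
L \subseteq \bigcap_{k \in J}(g^\dagger u_k)^\perp = g^{-1}\bigl(\langle u_k : k \in J\rangle^\perp\bigr).
\]
For generic $g$, the subspace $V_J := g^{-1}\langle u_k : k \in J \rangle^\perp$ is a generic subspace of $\CC^m$ of dimension $d := m - \dim\langle u_k : k \in J\rangle$.

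The main (but still routine) step is then to compute the maximum over subspaces $L \subseteq V_J$ of $\sum_{j=1}^m \Delta q_j \dim L \cap E_j$ for generic $V_J$ of dimension $d$. Since all $\Delta q_j \geq 0$, this is maximized at $L = V_J$; for generic $V_J$ one has $\dim V_J \cap E_j = \max(0, d - m + j)$, and a summation-by-parts identity yields
\[
\max_{L \subseteq V_J} \sum_{j=1}^m \Delta q_j \dim L \cap E_j = \sum_{i=m-d+1}^{m} q_i = \Tr Q - \sum_{i=1}^{\dim \langle u_k : k \in J \rangle} q_i.
\]
Plugging this into the block-rank-nondecreasing inequality of Proposition \ref{thm:bl_rnd} and using $\Tr P = \Tr Q$, the condition for all $J$ becomes exactly
\[
\sum_{k \in J} p_k \leq \sum_{i=1}^{\dim \langle u_k : k \in J\rangle} q_i,
\]
which is the stated Barthe-type condition. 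The hard part is nothing deep, just the generic intersection dimension calculation and the summation-by-parts; everything else is bookkeeping that lets Theorem \ref{thm:block_diag_scalable} do the heavy lifting.
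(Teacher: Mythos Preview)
Your proposal is correct and follows essentially the same route as the paper: set up the $(\mathbf{m},\mathbf{n})$-block-diagonal operator with Kraus operators $u_i e_i^\dagger$, identify approximate $Q$-isotropic position with approximate $\GL_m(\CC)\times\GL_{\mathbf n}(\CC)$-scalability, invoke Theorem \ref{thm:block_diag_scalable} and Proposition \ref{thm:bl_rnd}, take $L$ maximal equal to $g^{-1}\langle u_j:j\in J\rangle^\perp$, and compute the generic value of $\sum_j \Delta q_j\dim L\cap E_j$. Your write-up is in fact slightly more explicit than the paper's in the generic-intersection and summation-by-parts steps, but the argument is the same.
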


\begin{proposition}\label{alt_char_poly} $\vec p$ is in $\cK_{\vec q}(\vec U)$ if and only if
\begin{equation}\sum_{j = 1}^n p_j = \sum_{i = 1}^m q_i \label{tr_forst}\end{equation} and 
\begin{equation}\sum_{j \in J} p_j \leq \sum_{i = 1}^{\dim \langle u_j: j \in J\rangle} q_i \label{submod_forst}\end{equation}
for all $J \subset [n]$. 

\end{proposition}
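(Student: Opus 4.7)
The plan is to recognize $\cK_{\vec q}(\vec U)$ as the \emph{base polytope} of a polymatroid and then invoke Edmonds' theorem. Define the matroid rank function $r: 2^{[n]} \to \ZZ_{\geq 0}$ by $r(J) = \dim \langle u_j : j \in J \rangle$; this is the rank function of the linear matroid associated to $\vec U$, so it is normalized, nondecreasing, submodular, and unit-increase (adjoining one element changes the rank by $0$ or $1$). Let $g: \{0,1,\dots,m\}\to\RR$ be the cumulative sum $g(k) = \sum_{i=1}^{k} q_i$, which is concave, nondecreasing, and vanishes at $0$ because $q_1 \geq \dots \geq q_m \geq 0$. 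Finally set $f(J) = g(r(J))$. The target inequalities in \eqref{submod_forst} are then exactly $\sum_{j \in J} p_j \leq f(J)$, and the trace equality \eqref{tr_forst} is $\sum_j p_j = f([n])$.

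First I would verify that $f$ is a submodular, nondecreasing, $0$-normalized set function, so that it defines a polymatroid. Nondecreasingness and $f(\emptyset)=0$ are immediate. For submodularity one checks, for $A,B \subset [n]$, that if $\delta = r(A\cap B) \leq \alpha = r(A), \beta = r(B) \leq \gamma = r(A \cup B)$ and $\alpha + \beta \geq \gamma + \delta$ (submodularity of $r$), then concavity of $g$ (the fact that the increments $g(k+1)-g(k)=q_{k+1}$ are nonincreasing) together with $\alpha - \delta \geq \gamma - \beta$ yields
\[
g(\alpha)-g(\delta) \;=\; \sum_{k=\delta}^{\alpha-1} q_{k+1} \;\geq\; \sum_{k=\beta}^{\gamma-1} q_{k+1} \;=\; g(\gamma)-g(\beta),
\]
which rearranges to $f(A) + f(B) \geq f(A\cup B) + f(A\cap B)$.

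Next I would apply the standard result of Edmonds: the base polytope
\[
B_f \;=\; \{\, \vec p \in \RR^n : \sum_{j \in [n]} p_j = f([n]),\ \ \sum_{j\in J} p_j \leq f(J) \ \forall J \subseteq [n]\,\}
\]
of a polymatroid $f$ is the convex hull of its \emph{greedy vertices}: for each permutation $\tau$ of $[n]$ the vertex $\vec p^{\,\tau}$ defined by $p^{\,\tau}_{\tau(j)} = f(\{\tau(1),\dots,\tau(j)\}) - f(\{\tau(1),\dots,\tau(j-1)\})$. (Nonnegativity $\vec p \geq 0$ in $B_f$ follows from $f$ being nondecreasing, so it is not a separate constraint.)

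The final step is to match the greedy vertices of $B_f$ with the vertices listed in Definition \ref{forster_poly}. Because $r$ is unit-increase, each increment $f(\{\tau(1),\dots,\tau(j)\}) - f(\{\tau(1),\dots,\tau(j-1)\})$ is either $0$ (when $u_{\tau(j)}$ lies in the span of its predecessors) or equals $q_k$ (when $u_{\tau(j)}$ raises the rank from $k-1$ to $k$). Hence the support $S$ of $\vec p^{\,\tau}$ is a basis of the linear matroid on $\vec U$, the nonzero entries are $q_1,\dots,q_m$ in the order induced by $\tau$, and so $\vec p^{\,\tau}$ has the form $(1_S(i)\,q_{\sigma(i)} : i \in [n])$ for some bijection $\sigma: S \leftrightarrow [m]$. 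Conversely, given any basis $S = \{s_1,\dots,s_m\}$ and bijection $\sigma: S \to [m]$, listing $\sigma^{-1}(1),\sigma^{-1}(2),\dots,\sigma^{-1}(m)$ first and the remaining indices arbitrarily afterward yields a permutation $\tau$ whose greedy vertex is exactly the given one. Thus the vertex sets coincide, so $\cK_{\vec q}(\vec U) = B_f$, which is the polytope described by \eqref{tr_forst} and \eqref{submod_forst}.

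The main obstacle in this plan is the submodularity verification in Step 1; everything else is bookkeeping around Edmonds' theorem. The submodularity step is what forces us to use that $\vec q$ is \emph{nonincreasing and nonnegative} (i.e., that $g$ is concave and nondecreasing), and it is the only place where the combinatorial structure of the matroid interacts nontrivially with the numerical data $\vec q$.
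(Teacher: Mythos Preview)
Your proposal is correct and follows essentially the same route as the paper: define $f(J)=\sum_{i=1}^{r(J)} q_i$, verify it is a polymatroid rank function, invoke Edmonds' description of the vertices, and match them with the vertices in Definition~\ref{forster_poly}. The only cosmetic difference is in the submodularity check: the paper computes the single-element marginals $f(X\cup\{j\})-f(X)=\mathbf 1_{u_j\notin\langle u_i:i\in X\rangle}\,q_{r(X)+1}$ directly, whereas you factor $f=g\circ r$ and use concavity of $g$ together with submodularity of the matroid rank $r$; both arguments are standard and equivalent.
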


\begin{proof}[Proof of Proposition \ref{alt_char_poly}]
Polytopes of the form $\sum_{j \in J} p_j \leq f(J)$ for submodular set functions $f$ are well-understood, so we first check that our constraints take this form.
\begin{lemma}\label{submodlem}
The function  \begin{equation}f_{\vec q}(J) =\left\{ \begin{array}{cc} \label{submod}\sum_{i =1}^{\dim\langle u_j: j \in J \rangle} q_j & J \neq \emptyset\\
f_{\vec q}(J)= 0 & J = \emptyset\end{array} \right.
\end{equation}
is a nonnegative, monotone, submodular function on the lattice of subsets of $[n]$. 
\end{lemma}
\begin{proof}[Proof of Lemma \ref{submodlem}] Nonnegativity and monotonicity are clear. To show that $f_{\vec q}$ is submodular, it is enough to show $f_{\vec q}$ gives decreasing marginal returns, that is, for $X \subset Y$ and $x \in [n] \setminus Y$, 
$$ f_{\vec q}(X \cup \{j\}) - f_{\vec q}(X) \geq f_{\vec q}(Y \cup \{j\}) - f_{\vec q}(Y).$$
Indeed, 
\begin{eqnarray*} f_{\vec q}(X \cup \{j\}) - f_{\vec q}(X) = \sum_{i =1}^{\dim(\langle u_i: i \in X \rangle + u_j ) } q_i - \sum_{i =1}^{\dim\langle u_i: i \in X \rangle } q_i\\
= 1_{u_j \not\in \langle u_i: i \in X \rangle} q_{\dim\langle u_i: i \in X \rangle + 1} \geq 1_{u_j \not\in \langle u_i: i \in Y \rangle} q_{\dim\langle u_i: i \in Y \rangle + 1}\\
%= \sum_{i =1}^{\dim(\langle v_i: i \in Y \rangle + v_j ) } q_i - \sum_{i =1}^{\dim\langle v_i: i \in Y \rangle } q_i\\
=f_{\vec q}(Y \cup \{j\}) - f_{\vec q}(Y).
\end{eqnarray*}
\end{proof}%I believe this is shown in Edmonds' paper about polymatroids, since $\sum_{i =1}^{\dim\langle B_i: i \in S \rangle} \lambda_i$ is a nonnegative, monotone, submodular function on the lattice of subsets of $[n]$. Need to check this more carefully.
Next we use a theorem of Edmonds. 
\begin{theorem}[\cite{Ed70}] If $E$ is a finite set and $L$ is an intersection-closed family of subsets of $E$, let 
$$P(E,f) = \{x \in \RR^E_+: \forall S \in L - \emptyset,\; \sum_{i \in S} x_i \leq f(S) \}.$$
If $f$ is a nonnegative, monotone function on $2^{E}$ with $f(\emptyset) = 0$, then each vertex $x$ of $P(E,f)$ is given by
$$ x_\sigma(i) = f(\{\sigma(j): j \leq i \}) - f(\{\sigma(j):j < i\}) $$
for some ordering $\sigma: [|E|] \leftrightarrow E,$ and every ordering corresponds to such a vertex.
\end{theorem}

Next note that $p$ satisfies the equality \ref{tr_forst} and the inequality \ref{submod_forst} if and only if $$p \in \cK'_{\vec q}(\vec U) :=P([n], f_{\vec q}) \cap \left\{p: \sum_{i = 1}^n p_i = \sum_{i = 1}^m q_i \right\}.$$ 
For $x \in P([n],f_{\vec q})$, $\sum_{i = 1}^n x_i \leq \sum_{i = 1}^m q_i$ by the constraint when $J = [n]$. Thus, $\cK'_{\vec q}(\vec U)$ is the convex hull of the vertices of $P([n],f_\lambda)$ that are contained in the hyperplane $\left\{p: \sum_{i = 1}^n p_i = \sum_{i = 1}^m q_i \right\}$. Recall from Definition \ref{forster_poly} that these are exactly the vertices of $\cK_{\vec q}(\vec U)$; hence $\cK_{\vec q}(\vec U) = \cK'_{\vec q}(\vec U)$. The proposition is proved.
%The second part of the theorem follows because all facets of $\cK_q(\cV) = P([n], f_\lambda) \cap \left\{p: \sum_{i = 1}^n p_i = \Tr \Lambda \right\}$ other than $\left\{p: \sum_{i = 1}^n p_i = \Tr \Lambda \right\}$ itself arise from intersections of a nonempty subset of the constraints of $P([n],f_\lambda)$. If each of these constraints is made strict, all these facets are removed from $P_\lambda(\cV)$ to form $\tilde{P}_\lambda(\cV)$.
\end{proof}

\begin{proof}[Proof of Lemma \ref{alt_char_forst}] Consider the completely positive map $T_{\vec U}:\Mat_{n\times n}(\CC) \to \Mat_{m\times m}(\CC)$ with Kraus operators 
\begin{align*} A_i:= \left[\begin{array}{ccc}  0_{m,i-1} &  u_i  & 0_{m,n-i}  \end{array}\right] \end{align*}
for $i \in [n]$. Here $0_{m, k}$ denotes an $m \times k$ zero submatrix. Then it is not hard to see that $\vec U$ can be approximately put in $Q$-isotropic position with respect to $p$ if and only if $T_{\vec U}$ is approximately $(G = \GL_m(\CC), H = \GL_{\vec n}(\CC))$-scalable to $(I_n \to Q, I_m \to P)$, where $\vec n = (1, \dots, 1)$. That is, $H$ is the group of diagonal invertible complex matrices.\\

By Theorem \ref{thm:gln_scalable}, $T_{\vec U}$ is approximately $(G,H)$-scalable to $(P,Q)$ if and only if $(T_{\vec U})_{g,h}$ is $(P,Q)$-rank-nondecreasing for a generic $(g^\dagger, h) \in G\times H$. However, multiplication by $h$ doesn't affect $(P,Q)$-rank-nondecreasingness because $h$ is diagonal, so $T_{\vec U}$ is approximately $(G, H)$-scalable to $(P,Q)$ if and only if $(T_{\vec U})_{g,I_n} = T_{g\vec U}$ is $(P,Q)$-rank-nondecreasing for a generic $g \in G$.\\

By Theorem \ref{thm:bl_rnd}, we need only check that $(T_{g\vec U})$ is block-$(P,Q)$-rank-nondecreasing. Since the $R_i$ are one dimensional subspaces, only $S = \{i: L_i = \{0\}\}$ matters. Further, by maximality, we may assume $L = \langle u_i : i \in S \rangle^\perp$. Thus, $T_{g\vec U}$ is $(P,Q)$-rank-nondecreasing if and only if 
$$ \sum_{j \in S} p_j + \sum_{i = 1}^m \Delta q_i \langle gu_i : i \in S \rangle^\perp \cap F_i \leq \Tr Q$$
for all $S \subset [n]$. For generic $g$, the left-hand side of the above inequality is equal to 
$$ \sum_{j \in S} p_j + \sum_{i = \dim \langle u_i : i \in S \rangle^\perp + 1}^m q_i$$
for all $S \subset[ n]$; thus, $T_{g\vec U}$ is $(P,Q)$-rank-nondecreasing for a generic $g \in G$ if and only if 
$$ \sum_{j \in S} p_j \leq \sum_{i = 1}^{\dim \langle u_i: i \in S \rangle} q_i$$
for all $S \subset [n]$.\end{proof}

\begin{remark}[Algorithms] Because $\cK_{\vec q}(\vec U)$ is a polymatroid and we an compute $\sum_{i = 1}^{\dim \langle u_i: i \in S \rangle} q_i$ from $S$ easily, we can easily test if $\vec p \in \cK_{\vec q}(\vec U)$ \cite{Ed70}. However, it is not clear if we can put $\vec U$ in $Q$-isotropic position with respect to $\vec p$ in time $\poly(m, n, b, \log (1/\epsilon))$. The $\poly(m, n, b, \log (1/\epsilon))$ algorithm for the $Q = I$ case in \cite{SV17} suggests this may be possible; one just needs to minimize the convex program 
$$ \inf_{\vec t \in \RR^n}\log \det\left(Q,  \sum_{i = 1}^n e^{t_i} u_i u_i^\dagger \right) - \sum_{i = 1}^n p_i t_i.$$
We wonder if this can also be formulated in terms of relative entropy.
%The hyperbolic polynomials approach for finding $\epsilon$-minimizers of $\inf_{X \geq 0} \det(Q, \sum_{i = 1}^n t^i u_i u_i^\dagger)/\prod_{i = 1} t_i^{p_i}$ may be viable, though the author is not aware of 
\end{remark}

\indent It's not too hard to see that Theorem \ref{gen_forst_thm} implies the more difficult ``if'' direction of the classic Schur-Horn theorem relating the diagonal and spectra of a Hermitian matrix. 
\begin{theorem}[Schur-Horn \cite{H54}]\label{schur_horn}
There is a Hermitian $n\times n$ matrix with diagonal $p_1 \geq \dots \geq p_n$ and spectrum $q_1 \geq  \dots \geq q_n$ if and only if $q_1, \dots, q_n$ majorizes $p_1, \dots, p_n$. That is, for all $i \leq n$, 
$$\sum_{j = 1}^i p_i \leq \sum_{j = 1}^i q_i.$$
\end{theorem}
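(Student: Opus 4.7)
The plan is to derive the nontrivial ``if'' direction from Theorem \ref{gen_forst_thm} applied to the standard basis; the converse is Schur's classical argument via Birkhoff's theorem. After shifting by a scalar multiple of the identity (which preserves both the majorization condition and the spectrum/diagonal pairing), I may assume $q_n > 0$, so that $Q := \diag(q_1, \ldots, q_n)$ is positive definite. I then take $\mathbf{U} = (e_1, \ldots, e_n)$ in Theorem \ref{gen_forst_thm}. The only size-$n$ subset of $[n]$ whose associated vectors form a basis is $[n]$ itself, so by Definition \ref{forster_poly},
$$\cK_\mathbf{q}(\mathbf{U}) = \operatorname{conv}\{(q_{\sigma(i)})_{i \in [n]} : \sigma \in S_n\},$$
which is the permutohedron of $\mathbf{q}$. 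By the Hardy--Littlewood--Polya theorem, this is exactly the set of $\mathbf{p}$ majorized by $\mathbf{q}$, so the hypothesis of Theorem \ref{schur_horn} matches that of Theorem \ref{gen_forst_thm} in this setup.

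Assuming $\mathbf{q}$ majorizes $\mathbf{p}$, Theorem \ref{gen_forst_thm} supplies a sequence $B_k \in \GL_n(\CC)$ such that, writing $v_i^{(k)} := B_k e_i / \|B_k e_i\|$ (unit vectors),
$$\sum_{i=1}^n p_i \, v_i^{(k)} (v_i^{(k)})^\dagger \ \longrightarrow \ Q.$$
I would form $V_k \in \Mat_{n\times n}(\CC)$ whose $i$-th column is $\sqrt{p_i}\, v_i^{(k)}$, so that $V_k V_k^\dagger \to Q$. The key observation is that the Hermitian matrix $H_k := V_k^\dagger V_k$ has diagonal entries exactly $(p_1, \ldots, p_n)$ (because each $v_i^{(k)}$ is a unit vector), while sharing the same spectrum as $V_k V_k^\dagger$ (since $V_k$ is square), so the spectrum of $H_k$ converges to $\mathbf{q}$.

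The main obstacle is converting these approximate scalings into an \emph{exact} Hermitian matrix with the prescribed spectrum, which I would handle by a compactness argument. Since $\|V_k\|_F^2 = \Tr(V_k V_k^\dagger) \to \Tr(Q)$, the sequence $\{V_k\}$ lies in a bounded subset of $\Mat_{n\times n}(\CC)$; extracting a convergent subsequence $V_k \to V$, continuity of $X \mapsto X^\dagger X$ yields $H_k \to H := V^\dagger V$. The limit $H$ is Hermitian, has diagonal exactly $\mathbf{p}$, and has spectrum exactly $\mathbf{q}$, producing the desired matrix. A minor auxiliary point is to verify that the reduction to the case $q_n > 0$ is legitimate; this is routine, since simultaneously translating $\mathbf{p}$ and $\mathbf{q}$ by the same constant preserves majorization and corresponds to adding a scalar to $H$.
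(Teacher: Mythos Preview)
Your proof is correct and follows essentially the same route as the paper: both derive the ``if'' direction from Theorem~\ref{gen_forst_thm} by recognizing that $\cK_\mathbf{q}(\mathbf{U})$ is the permutohedron for a suitable $\mathbf{U}$. The only cosmetic differences are that the paper picks $\mathbf{U}$ in general position (and pads $\mathbf{q}$ with zeros when $m<n$) whereas you shift to make $q_n>0$ and take $\mathbf{U}$ to be the standard basis of $\CC^n$; you also spell out the compactness argument passing from approximate to exact scalings, which the paper leaves implicit.
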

To prove the Schur-Horn theorem, simply pick $\mathbf{U}$ to be in general position - note that $\cK_{\mathbf{q}}(\mathbf{U})$ is then the \emph{permutohedron} of the vector $(q_1, \dots, q_m, 0, \dots, 0) \in \RR^n$, which is precisely the set of $\mathbf{p}$ that is majorized by $(q_1, \dots, q_m, 0, \dots, 0)$!

% % %
%\subsection{\CF{Quantum Schrodinger-Bridges}}
% % %

% % %---------------

\section{Future Work}
We wonder if in this setting there is an algorithm to find approximate scalings in time polynomial in $-\log(\epsilon)$ rather than $\epsilon^{-1}$. As it is, our algorithm resembles alternating minimization; perhaps other optimization techniques could result in faster algorithms. The recent fast $(\mathbf{r},\mathbf{c})$-scaling algorithms \cite{Li17}, \cite{M17} give hope that this is possible.\\
%It has been indicated to the author that such a speed-up does exist in the case $P = Q = I_n$.\\
\indent  Finding a $\poly \log (1/\epsilon)$ algorithm has another benefit: the algorithms herein are not capable of deciding $(P,Q)$-rank-nondecreasingness in strongly polynomial time. By Corollary \ref{cor:sure_rnd}, in order to certify $(P,Q)$-rank-nondecreasingness one requires $\epsilon$-$(I_m \to Q, I_n \to P)$-scalings for $\epsilon$ as small as a common denominator of all the entries of $\mathbf{p}$ and $\mathbf{q}$. However, our algorithm depends polynomially on $\epsilon^{-1}$. In fact, this decision problem was shown to be in $\mathbf{NP} \cap \mathbf{coNP}$ and is conjectured to be in $\mathbf{P}$; at least for \ref{hermitians} there is a strongly polynomial time algorithm to decide if the reduction is $(P,Q)$-rank-nondecreasing that has nothing to do with operator scaling \cite{MNS12}.

%\item  If the completely positive map $T^\rho$ and the bipartite mixed quantum state $\rho$ are in correspondence via the Jamio\l kowski isomorphism \cite{Ja74}, the scalings of $T^\rho$ scale $\rho$ in a very simple way that closely resembles the transformations allowed under the communication complexity class known as SLOCC \cite{DVC00}. However, SLOCC equivalence of $\rho, \rho'$ and interscalability of $T^\rho$ and $T^{\rho'}$ are not the same. We hope to infer something meaningful about the relationship between $\rho$ and $\rho'$ when $T^\rho$ can be scaled to $T^{\rho'}$.
%\item The Littlewood-Richardson coefficients give combinatorial conditions for scalability. We wonder if this is possible for other operators than the one arising in the reduction for 
%\ref{hermitians}.
%\item A complete answer to the conjecture in [\cite{GP15}] may be rather easy t

\section*{Acknowledgements}
The author would like to thank Michael Saks for many insightful discussions, and Rafael Oliveira for interesting observations and pointers to relevant literature. The author would like to further thank Ankit Garg, Rafael Oliveira, Avi Widgerson and Michael Walter for discussions concerning a forthcoming joint work that greatly simplified the presentation of the reduction to the doubly stochastic case.

\bibliographystyle{plain}
\bibliography{/Users/Cole/Latex/operatorscaling}

\begin{thebibliography}{10}

\bibitem{Li17}
Zeyuan Allen-Zhu, Yuanzhi Li, Rafael Oliveira, and Avi Wigderson.
\newblock Much faster algorithms for matrix scaling.
\newblock {\em arXiv preprint arXiv:1704.02315}, 2017.

\bibitem{B98}
Franck Barthe.
\newblock On a reverse form of the brascamp-lieb inequality.
\newblock {\em Inventiones mathematicae}, 134(2):335--361, 1998.

\bibitem{Bri87}
Michel Brion.
\newblock Sur l'image de l'application moment.
\newblock In {\em S{\'e}minaire d'Alg{\`e}bre Paul Dubreil et Marie-Paule
  Malliavin}, pages 177--192. Springer, 1987.

\bibitem{BCMW17}
Peter B\"urgisser, Matthias Christandl, Ketan~D Mulmuley, and Michael Walter.
\newblock Membership in moment polytopes is in {NP} and {coNP}.
\newblock {\em SIAM Journal on Computing}, 46(3):972--991, 2017.

\bibitem{BFGOWW18}
Peter B{\"u}rgisser, Cole Franks, Ankit Garg, Rafael Oliveira, Michael Walter,
  and Avi Wigderson.
\newblock Efficient algorithms for tensor scaling, quantum marginals and moment
  polytopes.
\newblock {\em arXiv preprint arXiv:1804.04739}, 2018.

\bibitem{M17}
Michael~B Cohen, Aleksander Madry, Dimitris Tsipras, and Adrian Vladu.
\newblock Matrix scaling and balancing via box constrained newton's method and
  interior point methods.
\newblock {\em arXiv preprint arXiv:1704.02310}, 2017.

\bibitem{DM17}
Harm Derksen and Visu Makam.
\newblock Polynomial degree bounds for matrix semi-invariants.
\newblock {\em Advances in Mathematics}, 310:44--63, 2017.

\bibitem{Ed70}
Jack Edmonds.
\newblock Submodular functions, matroids, and certain polyhedra.
\newblock {\em Edited by G. Goos, J. Hartmanis, and J. van Leeuwen}, 11, 1970.

\bibitem{Fo02}
J{\"u}rgen Forster.
\newblock A linear lower bound on the unbounded error probabilistic
  communication complexity.
\newblock {\em Journal of Computer and System Sciences}, 65(4):612--625, 2002.

\bibitem{Fz02}
Matthias Franz.
\newblock Moment polytopes of projective g-varieties and tensor products of
  symmetric group representations.
\newblock {\em J. Lie Theory}, 12(2):539--549, 2002.

\bibitem{Fr16}
Shmuel Friedland.
\newblock On schrodinger's bridge problem.
\newblock {\em arXiv preprint arXiv:1608.05862}, 2016.

\bibitem{F00}
William Fulton.
\newblock Eigenvalues, invariant factors, highest weights, and schubert
  calculus.
\newblock {\em Bulletin of the American Mathematical Society}, 37(3):209--249,
  2000.

\bibitem{GGOWbl16}
Ankit Garg, Leonid Gurvits, Rafael Oliveira, and Avi Wigderson.
\newblock Algorithmic aspects of brascamp-lieb inequalities.
\newblock {\em arXiv preprint arXiv:1607.06711}, 2016.

\bibitem{GGOW16}
Ankit Garg, Leonid Gurvits, Rafael Oliveira, and Avi Wigderson.
\newblock A deterministic polynomial time algorithm for non-commutative
  rational identity testing.
\newblock In {\em Foundations of Computer Science (FOCS), 2016 IEEE 57th Annual
  Symposium on}, pages 109--117. IEEE, 2016.

\bibitem{GP15}
Tryphon~T Georgiou and Michele Pavon.
\newblock Positive contraction mappings for classical and quantum
  schr{\"o}dinger systems.
\newblock {\em Journal of Mathematical Physics}, 56(3):033301, 2015.

\bibitem{Gu04}
Leonid Gurvits.
\newblock Classical complexity and quantum entanglement.
\newblock {\em Journal of Computer and System Sciences}, 69(3):448--484, 2004.

\bibitem{AM13}
Moritz Hardt and Ankur Moitra.
\newblock Algorithms and hardness for robust subspace recovery.
\newblock In {\em Conference on Learning Theory}, pages 354--375, 2013.

\bibitem{H54}
Alfred Horn.
\newblock Doubly stochastic matrices and the diagonal of a rotation matrix.
\newblock {\em American Journal of Mathematics}, 76(3):620--630, 1954.

\bibitem{H62}
Alfred Horn.
\newblock Eigenvalues of sums of hermitian matrices.
\newblock {\em Pacific Journal of Mathematics}, 12(1):225--241, 1962.

\bibitem{Ja74}
A~Jamio{\l}kowski.
\newblock An effective method of investigation of positive maps on the set of
  positive definite operators.
\newblock {\em Reports on Mathematical Physics}, 5(3):415--424, 1974.

\bibitem{Kl98}
Alexander~A Klyachko.
\newblock Stable bundles, representation theory and hermitian operators.
\newblock {\em Selecta Mathematica, New Series}, 4(3):419--445, 1998.

\bibitem{KT00}
Allen Knutson and Terence Tao.
\newblock Honeycombs and sums of hermitian matrices.
\newblock 2000.

\bibitem{Le10}
Veerle Ledoux and Simon~JA Malham.
\newblock Introductory schubert calculus.
\newblock {\em Review Notes, Sept}, 2010.

\bibitem{LSW98}
Nathan Linial, Alex Samorodnitsky, and Avi Wigderson.
\newblock A deterministic strongly polynomial algorithm for matrix scaling and
  approximate permanents.
\newblock In {\em Proceedings of the thirtieth annual ACM symposium on Theory
  of computing}, pages 644--652. ACM, 1998.

\bibitem{MilneAG}
James~S. Milne.
\newblock Algebraic geometry (v6.02), 2017.
\newblock Available at www.jmilne.org/math/.

\bibitem{MNS12}
Ketan~D Mulmuley, Hariharan Narayanan, and Milind Sohoni.
\newblock Geometric complexity theory iii: on deciding nonvanishing of a
  littlewood--richardson coefficient.
\newblock {\em Journal of Algebraic Combinatorics}, 36(1):103--110, 2012.

\bibitem{M99}
D~Mumford.
\newblock The red book of varieties and schemes, second, expanded edition,
  volume 1358 of lnm, 1999.

\bibitem{RS89}
Uriel~G Rothblum and Hans Schneider.
\newblock Scalings of matrices which have prespecified row sums and column sums
  via optimization.
\newblock {\em Linear Algebra and its Applications}, 114:737--764, 1989.

\bibitem{Si64}
Richard Sinkhorn.
\newblock A relationship between arbitrary positive matrices and doubly
  stochastic matrices.
\newblock {\em The annals of mathematical statistics}, 35(2):876--879, 1964.

\bibitem{SV17}
Damian Straszak and Nisheeth~K Vishnoi.
\newblock Computing maximum entropy distributions everywhere.
\newblock {\em arXiv preprint arXiv:1711.02036}, 2017.

\bibitem{CDW13}
Michael Walter, Brent Doran, David Gross, and Matthias Christandl.
\newblock Entanglement polytopes: multiparticle entanglement from
  single-particle information.
\newblock {\em Science}, 340(6137):1205--1208, 2013.

\end{thebibliography}

\appendix

\section{Appendix}\label{app:missing_proofs}
\subsection{Missing proofs for Section \ref{results_sec}}\label{app:results_sec}

\begin{proof}[Proof of Eq. \ref{eq:character}] Let $\textbf a = (a_1 \geq \dots \geq  a_m) \in \RR^m$, $A = \diag (\textbf a)$, and $g_1, g_2 $ upper triangular. Recall that $\det(A, g_1^\dagger g_2) = \prod_{i = m} \det(\eta_i  g_1^\dagger g_2 \eta_i^\dagger)^{\Delta a_i}$. By Proposition \ref{prp:upper_triangular},
\begin{align*}\prod_{i = 1}^m \det(\eta_i  g_1^\dagger g_2 \eta_i^\dagger)^{\Delta a_i} &= \prod_{i = 1}^m \det(\eta_i  g_1^\dagger g_2 \eta_i^\dagger)^{\Delta a_i} \\
&= \prod_{i = 1}^m \det(\eta_i  g_1^\dagger \eta_i^\dagger)^{\Delta a_i} \prod_{i = 1}^m \det (\eta_i g_2 \eta_i^\dagger)^{\Delta a_i}
%&=\prod_{i = 1}^m \det(\eta_i  g_1^\dagger \eta_i^\dagger \eta_i g_2 \eta_i^\dagger)^{\Delta a_i}
\end{align*}
Since $ \eta_i  B \eta_i^\dagger$ is just the upper-left $i\times i$ principal minor of a matrix $B$ and $g_2$ is upper triangular, we have 
\begin{align*}
%\prod_{i = 1}^m \det(\eta_i  g_1^\dagger g_2 \eta_i^\dagger)^{\Delta a_i} &= \prod_{i = 1}^m \det(\eta_i  g_1^\dagger g_2 \eta_i^\dagger)^{\Delta a_i} \\
 \prod_{i = 1}^m \det (\eta_i g_2 \eta_i^\dagger)^{\Delta a_i} =  \prod_{i = 1}^m \prod_{j = 1}^i \det (g_2)_{jj}^{\Delta a_i} = \prod_{i = 1}^m (g_2)_{jj}^{a_j} = \chi_{\textbf a} (g_2).
%&=\prod_{i = 1}^m \det(\eta_i  g_1^\dagger \eta_i^\dagger \eta_i g_2 \eta_i^\dagger)^{\Delta a_i}
\end{align*}
Deduce by symmetry that also $\prod_{i = 1}^m \det(\eta_i  g_1^\dagger \eta_i^\dagger)^{\Delta a_i} = \overline{\chi_{\textbf a} (g_1)}$, so Eq. \ref{eq:character} holds.
\end{proof}

\subsection{Missing proofs for Section \ref{sec:reduction}}\label{app:reduction}

\begin{proof}[Proof of Item 4 of Theorem \ref{thm:stoc_reds}]
We wish to prove $\trun_{P,Q} T $ is rank-nondecreasing if and only if $T$ is $(P,Q)$-rank-nondecreasing.

Let $N = \Tr P$. First, recall that $\trun_{P,Q} T = G_{\vec q} \circ T \circ G_{\vec p}^*.$ Hence, it is enough to show that $T \circ G_{\vec p}^*$ is $(I_N, S)$-rank-nondecreasing if and only if $T$ is $(P,S)$-rank-nondecreasing for any $S$, because then $T$ is $(P,Q)$-rank-nondecreasing if and only if $T \circ G_{\vec p}^*$ is $(I_N, Q)$-rank-nondecreasing if and only if $G_{\vec p} \circ T^*$ is $(Q, I_N)$-rank-nondecreasing if and only if $G_{\vec p} \circ T^* \circ G_{\vec q}^*$ is $(I_N, I_N)$-rank-nondecreasing.\\

Let $\lambda$ be the conjugate partition to $\vec p$. Recall that the Kraus operators of $T\circ G_{\vec p}^*$ are, for $i \in [r]$ and $j \in [p_1]$, 
$$A_i \eta_{\lambda_j}^\dagger \pi_j : \bigoplus_{k = 1}^{p_1} \CC^{\lambda_k} \to \CC^m$$
 where $\pi_j$ projects to the $j^{th}$ summand. 
 
If $T\circ G_{\vec p}^*$ is $(I_N, S)$ rank-nondecreasing, then $T$ is $(P,S)$-rank-nondecreasing) because for any $T$-independent pair $(L,R)$, \ref{diff_rnd_eq} must hold for $T \circ G_{\vec p}^*, I_N, S$ on the $T\circ G_{\vec p}^*$-independent pair $(L, \bigoplus_{j = 1}^{p_1} R \cap \CC^{\lambda_j})$, which is equivalent to \ref{diff_rnd_eq} holding on $(L, R)$ for $T, P, S$. \\

The more difficult direction is to show that we only need to check \ref{diff_rnd_eq} for $T \circ G_{\vec p}^*, I_N, S$ on the subspaces of the form $(L, \bigoplus_{j = 1}^{p_1} R\cap \CC^{\lambda_j})$. Firstly, using the fact that $T \circ G_{\vec p}^*$ is block-diagonal and applying Proposition \ref{thm:bl_rnd}, we see that it is enough to check $\ref{diff_rnd_eq}$ on $(L, \bigoplus_{j = 1}^{p_1} R_j),$ where $R_j \subset \CC^{\lambda_j}$. Note that $T\circ G_{\vec p}^*$-independence of this pair is equivalent to $(L, R_j)$, viewed as a subspace of $\CC^n$, being $T$-independent for all $j$. Since $T$-independence is closed under addition of when one member of the pair is fixed, and closed downward, we may assume $R_j = R_1 \cap \CC^{\lambda_j}$. \end{proof}

\subsection{Missing proofs for Section \ref{sec:triangular_scalings}}\label{app:triangular_scalings}

\begin{proof}[Proof of Lemma \ref{inv_red_lite}] We first show \ref{cap_lower}. Recall \begin{align*}
\capacity(T, P, Q) = \inf_{h \in \GL(F_\bullet)} \frac{\det(Q, T(hPh^\dagger))}{\det(P, h^\dagger h)}.
\end{align*}
First observe that if $A, B$ are Hermitian, $\det(P, A) = \det(\underline{P}, \eta A \eta^\dagger)$ and similarly $\det(Q, B) = \det(\underline{Q}, \nu B \nu^\dagger)$. Next, by Proposition \ref{prp:upper_triangular}, $T(hPh^\dagger) = T(\eta ^\dagger \underline{hPh}^\dagger \eta)$. Together these observation imply 
$$\capacity(T, P, Q) = \inf_{h \in \GL(F_\bullet)} \frac{\det(\underline{Q}, \underline{T}( \underline{h} \underline{P} \underline{h}^\dagger )  )}{\det(\underline{P},\underline{h}^\dagger \underline{h})};
$$
to complete the proof, observe that $\{\underline{h}: h \in \GL(F_\bullet)\}$ is simply $\underline{H}$.

Let us now prove 2. Suppose $\underline{T}_{\underline{g}, \underline{h}}$ is an $\epsilon$-$(I_{\rank P} \to \underline{Q}, I_{\rank Q} \to \underline{P})$. We use a limiting argument: let $h$ approach $\eta^\dagger \underline{h} \eta$ and $g$ approach $\nu^\dagger \underline{g} \nu $.
Then $g^\dagger T (h h^\dagger) g$ approaches 
\begin{align*}\nu^\dagger \underline{g} \underline{T}( \underline{h} \eta \eta^\dagger \underline{h}^\dagger )   \underline{g}  \nu^\dagger &= \nu^\dagger \underline{g} \underline{T}( \underline{h}  \underline{h}^\dagger )   \underline{g}  \nu^\dagger\\
&= \nu^\dagger (\underline {Q} + X) \nu\\
&= Q  + \nu^\dagger X \nu
 \end{align*}
 Where $X$, and hence $\nu^\dagger X \nu$, has trace-norm at most $\epsilon$. By symmetry, $h^\dagger T^* (g g^\dagger) h$ also approaches a positive-semidefinite matrix at most $\epsilon$ from $P$; this completes the proof. 
 %By Proposition \ref{prp:upper_triangular}, $\nu^\dagger \underline{g} \underline{T}( \underline{h} \eta \eta^\dagger \underline{h}^\dagger )   \underline{g}  \nu^\dagger = \nu^\dagger \underline{g} \underline{T}( \underline{h} \eta \eta^\dagger \underline{h}^\dagger )   \underline{g}  \nu^\dagger$
%\begin{align*}
% \nu^\dagger \underline{g}  T(\eta^\dagger \underline{h} \eta \eta^\dagger \underline{h}^\dagger \eta)   \nu^\dagger \underline{g}  \nu^\dagger,
 % = \nu^\dagger \underline {Q} \nu  + \nu^\dagger X \nu = Q + \underline{X} \\
% \textrm{ and }h^\dagger T^* (g g^\dagger) h &\to \eta^\dagger \underline{T}^*_{\underline{h}, \underline{h}} \eta = \eta^\dagger \underline {P} \eta + \eta^\dagger Y \eta = P +  \underline{Y}.
%\end{align*}
%Where $X$ and $Y$ both have trace-norm at most $\epsilon$, and so the same is true for $\underline{X}$ and $\underline{Y}$. 
\end{proof}

\begin{proof}[Proof of Lemma \ref{lem:dual_inv}]
First we prove the claim for $T$. We can rewrite
\begin{align}
\capacity(T, P, Q) &= \inf_{h \in \GL(F_\bullet)}\frac{ \det(Q, T(h P h^\dagger))}{\det(P, h^\dagger h)}\nonumber\\
&=\inf_{\tilde{h} \in \GL(F_\bullet)}\frac{ \det(Q, T(\tilde{h} \tilde{h}^\dagger))}{\det(P, P^{-1/2}\tilde{h}^\dagger \tilde{h} P^{-1/2})}\nonumber\\
&= \det(P, P) \inf_{h \in \GL(F_\bullet)}\frac{ \det(Q, T(hh^\dagger))}{\det(P, h^\dagger h)}.\label{alt_form_cap}
\end{align}
Since $\det(P,P) > 0$, we have $\capacity(T, P, Q) > 0$ if and only if $\inf_{h \in \GL(F_\bullet)}\frac{ \det(Q, T(hh^\dagger))}{\det(P, h^\dagger h)} > 0$. \\

For any $X \in \cS_{++}(V)$, we can write $X = hh^\dagger$ by the existence of Cholesky decompositions. Since $\capacity(T, P, Q) \neq 0$, $\det(Q,T(hh^\dagger)) >0$. This implies $hh^\dagger$ must be nonsingular because $q_m > 0$. \\

We now prove the claim for $T^*$. Suppose $T^*(Y)$ is singular for $Y \succ 0$. Since $T^*(Y) = \sum_{i = 1}^r A_i^\dagger Y A_i$, $$\ker T^*(Y) \subset \bigcap_i \ker A_i:=R.$$
Notice that $(W, R)$ is a $T$-independent pair. Let $d = \dim R > 0$. For $c > 1$, let $h_c h_c^\dagger = c\pi_R +  \pi_{R^\perp}$. Because $R \subset \ker A_i$, we have $A_i h_ch_c^\dagger A_i^\dagger = A_i \pi_{R^\perp} A_i^\dagger$ for all $i$, or $T(h_c h_c^\dagger) = T(\pi_{R^\perp})$. Then $\det(Q, T(h_c h_c^\dagger)) = \det(Q, T(\pi_{R^\perp}))$.\\
\indent  On the other hand, $h_c^\dagger h_c$ has the same spectrum as $h_c h_c^\dagger$, so it has all eigenvalues at least 1 and an eigenspace of eigenvalue $c$ of dimension at least $d$. Since $P$ is invertible, $p_n > 0$ and 
$$ \det(P, h_c^\dagger h_c) > c^{d p_n}.$$
Plugging $h_c$ into \ref{alt_form_cap} and letting $c \to \infty$ shows $\capacity(T, P, Q) = 0$, a contradiction. \end{proof}

\begin{proof}[Proof of Lemma \ref{lem:cap_evol}]
\begin{align*}
\capacity(T_{g,h}, P,Q) &= \inf_{x \in  \GL(F_\bullet)}\frac{ \det(Q, T_{g,h}(x P x^\dagger))}{\det(P, x^\dagger x)}\\
&=\inf_{x \in  \GL(F_\bullet)}\frac{ \det(Q, g^\dagger T(h x P x^\dagger h^\dagger ))g}{\det(P, x^\dagger x)}\\
&= \inf_{y \in  \GL(F_\bullet)} \frac{ \det(Q, g^\dagger T(y P y^\dagger )g)}{\det(P, y^\dagger h^{-\dagger} h^{-1} y)}\\
&=\inf_{y \in  \GL(F_\bullet)} \frac{ \det(Q, g^\dagger g) \det(Q, T(y P  y^\dagger ))}{\det(P, y^\dagger y)\det(P, h^{-\dagger} h^{-1})}\\
&=\det(Q, g^\dagger g) \det(P, h^\dagger h)  \inf_{y \in  \GL(F_\bullet)} \frac{  \det(Q, T(y P y^\dagger ))}{\det(P, y^\dagger y)}.
\end{align*}
The second two inequalities follow from \ref{twoside_mult} of Lemma \ref{detpx_facts}, and the last from \ref{twoside_inv} of Lemma \ref{detpx_facts}.
\end{proof}

\begin{proof}[Proof of Lemma \ref{lem:cap_incr}]
By \ref{twoside_mult} of Lemma \ref{detpx_facts}, if $h^\dagger T^*(Q) h = I,$ then 
$$\det(P, h^\dagger h) = \frac{1}{\det(P, T^*(Q))}.$$
Thus, it is enough to show 

%$\eta_i h^\dagger T^*(Q) h \eta_i^\dagger = I_{E_i}$, so if $a_i \neq 0$ then  
%$ \eta_i h \eta_i^\dagger\eta_i h^\dagger \eta_i^\dagger =( \eta_i T^*(Q) \eta_i^\dagger)^{-1},$ which implies 
%\begin{equation} (\det  \eta_j h^\dagger  h \eta_j^\dagger){\Delta p_j} = (\det  \eta_i h \eta_i^\dagger\eta_i h^\dagger \eta_i^\dagger){\Delta p_j} = (\det {\eta_i} T^*(Q) {\eta_i}^\dagger)^{-a_i}.\label{alt_form}
%\end{equation}
%By Lemma \ref{cap_evol} and the above equation, 
%$$\capacity_{p,q,E_\bullet, F_\bullet} T_{I,h} =  \left(\prod_{i = 1}^n (\det {\eta_i} T^*(Q) {\eta_i}^\dagger)^{-a_i} \right) \capacity_{p,q,E_\bullet, F_\bullet}T,$$
%It is enough to show 
$$\log \det(P, T^*(Q)) = \log \left(\prod_{i = 1}^n (\det {\eta_i} T^*(Q) {\eta_i}^\dagger)^{\Delta p_i} \right) \leq -   .3 \min\{\epsilon,  p_n\}.$$
For $i \in [n]$ and $j \in [i]$, let $\lambda_{ij}$ be the $j^{th}$ eigenvalue of ${\eta_i} T^*(Q) {\eta_i}^\dagger$. 
Then $$\log \left(\prod_{i = 1}^n (\det {\eta_i} T^*(Q) {\eta_i}^\dagger)^{\Delta p_i} \right) = \sum_{i = 1}^n \Delta p_j \sum_{j = 1}^i \log \lambda_{ij}.$$
Since $\sum_{i = 1}^n i \Delta p_i = \Tr P = 1$, we may define a discrete random variable $X$ by assigning probability $a_i$ to $\lambda_{ij}$. Then 
$$\E[X] = \sum_{i = 1}^n\Delta p_i \Tr {\eta_i} T^*(Q) {\eta_i}^\dagger =  \sum_{i = 1}^n \Tr \Delta p_i {\eta_i}^\dagger{\eta_i} T^*(Q)  = \Tr PT^*(Q) = \Tr T(P) Q = \Tr Q = 1,$$ and it is enough to show $\E[\log X] \leq - \min\{.3 \epsilon, .3 p_n\}$. By definition, 
$$\ds_{P,Q}{T} = \sum_{i = 1}^n \Delta p_i \Tr ({\eta_i} T^*(Q) {\eta_i}^\dagger - I_{i})^2 = \sum_{i = 1}^n \Delta p_i\sum_{j = 1}^i (\lambda_{ij} - 1)^2 = \V[X] \geq \epsilon.$$ 
If a concave function has high variance, then the expectation of the function should be strictly less than the function of the expectation. However, $X$ may have outliers which limits our ability to use this to our advantage. We split into the case where all $\lambda_{ij} \leq 2$ and the case where there is some $\lambda_{ij} > 2$.\\
\indent Define $\epsilon_1 = \V[X| X \leq 2]\Pr[X \leq 2]$, and $\epsilon_2 = \V[X| X > 2]\Pr[X > 2]$ so that $\V[X] = \epsilon_1 + \epsilon_2 \geq \epsilon$. 
If $0 < x\leq 2$, then $\log x \leq (x-1) -.3(x-1)^2$, and by convexity for $x > 2$, $\log x \leq (\log2 - 1)(x-1) \leq .7 (x-1).$
Hence,
\begin{align*}
\E[\log X] = \E[\log X|X \leq 2]\Pr[X \leq 2] + \E[\log X|X > 2]\Pr[X > 2] \\
\leq \E[(X-1) -.3(X-1)^2|X \leq 2]\Pr[X \leq 2] + \E[.7(X-1)|X > 2]\Pr[X > 2]\\
= -.3 \epsilon_1 + \E[(X-1) |X \leq 2]\Pr[X \leq 2] +  .7\E[(X-1)|X > 2]\Pr[X > 2]\\
= -.3 \epsilon_1 + \E[(X-1)] - .3 \E[(X-1)| X > 2] \Pr[X > 2]\\
= -.3 \epsilon_1 - .3 \E[(X-1)| X > 2] \Pr[X > 2]\\
\leq -.3 \epsilon_1 -.3 \Pr[X > 2].
\end{align*}
If $\Pr[X > 2] = 0$, then $\epsilon_1 = \V[X] \geq \epsilon$. Else, there is at least one $\lambda_{ii} \geq \lambda_{ij} > 2$. However, by Cauchy interlacing, $\lambda_{n1} \geq 2$, which occurs with probability $\Delta p_n = p_n$. Thus, if $\Pr[X > 2] > 0$, then $\Pr[X > 2] \geq p_n$. 
\end{proof}

\begin{proof}[Proof of Lemma \ref{lem:cap_ub}] Plug in $h = I_n$. By definition, \begin{align*}
\capacity_H( T, P, Q) \leq  \frac{\det(Q, T(P))}{\det(P, I_n)} = \prod_{i:\Delta q_i \neq 0}^m \left(\det \eta_i T\left(P\right)\eta_i^\dagger\right)^{\Delta q_i}
\end{align*}
If $\lambda_{ij}$ is the $i^{th}$ eigenvalue of $\eta_i T\left(P\right)\eta_i^\dagger$, then if $T(P) = I_m$ we have $\sum_{i=1}^m \Delta q_i\sum_{j  =1}^i \lambda_{ij} = \sum_{i = 1} i \Delta q_i = \Tr Q = 1$. If $T^*(Q) = I_n$, then 
$$\sum_{i=1}^m \Delta q_i\sum_{j  =1}^i \lambda_{ij} = \sum_{i = 1}^m \Delta q_i\Tr {\eta_i} T(P) {\eta_i}^\dagger =  \sum_{i = 1}^m \Tr \Delta q_i{\eta_i}^\dagger{\eta_i} T(P)  = \Tr QT(P) = \Tr T^*(Q) P = \Tr P = 1,$$
In either case, the AM-GM inequality implies 
$$\prod_{i:\Delta q_i \neq 0}^m \left(\det \eta_i T\left(P\right)\eta_i^\dagger\right)^{\Delta q_i} = \prod_{i:\Delta q_i \neq 0}^m \left(\prod_{j = 1}^i \lambda_{ij}\right)^{\Delta q_i} \leq 1.$$
%The inequality follows, because the von Neumann entropy satisfies $S(P) \leq \log n$. 

\end{proof}

\begin{proof}[Proof of Lemma \ref{lem:scaled_capacity_lower_bound}] By Lemma \ref{lem:cap_evol}, 
$$\capacity(T_1, P, Q)  = \capacity(T_{g,h}, P, Q) =  \det(Q, g^\dagger g)  \capacity(T, P, Q).$$
However, because $gT(P)g^\dagger =I $, \ref{twoside_mult} of Lemma \ref{detpx_facts} shows $\det(Q, g^\dagger g) = \det(Q, T(P))^{-1}$ and so 
%$({\eta_i} T^*(Q) {\eta_i}^\dagger)^{-1}$
$$\capacity(T_1, P, Q)= \det(Q, T(P))^{-1} \capacity(T, P, Q).$$
Thus, it is enough to bound $\det(Q, T(P))^{-1}$. One can immediately check that $T(P) \preceq m n 2^{3b} I$, so $$\det(Q, T(P))^{-1} \geq 2^{- 5bm}.$$
Multiplying the above by the bound from Theorem \ref{thm:stoc_lower_bd} implies Lemma \ref{lem:scaled_capacity_lower_bound} with some slack. \end{proof}

%\subsection{\CF{bit complexity}}

\end{document}